\begin{document}

%%%%%%%%%% Some definitions %%%%%%%%%%

%%%%%%%% Equations, theorems %%%%%%%%%
\renewcommand{\theequation}{\arabic{section}.\arabic{equation}}
\theoremstyle{plain}
\newtheorem{theorem}{\bf Theorem}[section]
\newtheorem{lemma}[theorem]{\bf Lemma}
\newtheorem{corollary}[theorem]{\bf Corollary}
\newtheorem{proposition}[theorem]{\bf Proposition}
\newtheorem{definition}[theorem]{\bf Definition}
\newtheorem{remark}[theorem]{\it Remark}
%\theoremstyle{remark}
%\newtheorem{remark}[theorem]{\bf Remark}

%%%%% Alphabet %%%%%
\def\a{\alpha}  \def\cA{{\mathcal A}}     \def\bA{{\bf A}}  \def\mA{{\mathscr A}}
\def\b{\beta}   \def\cB{{\mathcal B}}     \def\bB{{\bf B}}  \def\mB{{\mathscr B}}
\def\g{\gamma}  \def\cC{{\mathcal C}}     \def\bC{{\bf C}}  \def\mC{{\mathscr C}}
\def\G{\Gamma}  \def\cD{{\mathcal D}}     \def\bD{{\bf D}}  \def\mD{{\mathscr D}}
\def\d{\delta}  \def\cE{{\mathcal E}}     \def\bE{{\bf E}}  \def\mE{{\mathscr E}}
\def\D{\Delta}  \def\cF{{\mathcal F}}     \def\bF{{\bf F}}  \def\mF{{\mathscr F}}
\def\c{\chi}    \def\cG{{\mathcal G}}     \def\bG{{\bf G}}  \def\mG{{\mathscr G}}
\def\z{\zeta}   \def\cH{{\mathcal H}}     \def\bH{{\bf H}}  \def\mH{{\mathscr H}}
\def\e{\eta}    \def\cI{{\mathcal I}}     \def\bI{{\bf I}}  \def\mI{{\mathscr I}}
\def\p{\psi}    \def\cJ{{\mathcal J}}     \def\bJ{{\bf J}}  \def\mJ{{\mathscr J}}
\def\vT{\Theta} \def\cK{{\mathcal K}}     \def\bK{{\bf K}}  \def\mK{{\mathscr K}}
\def\k{\kappa}  \def\cL{{\mathcal L}}     \def\bL{{\bf L}}  \def\mL{{\mathscr L}}
\def\l{\lambda} \def\cM{{\mathcal M}}     \def\bM{{\bf M}}  \def\mM{{\mathscr M}}
\def\L{\Lambda} \def\cN{{\mathcal N}}     \def\bN{{\bf N}}  \def\mN{{\mathscr N}}
\def\m{\mu}     \def\cO{{\mathcal O}}     \def\bO{{\bf O}}  \def\mO{{\mathscr O}}
\def\n{\nu}     \def\cP{{\mathcal P}}     \def\bP{{\bf P}}  \def\mP{{\mathscr P}}
\def\r{\rho}    \def\cQ{{\mathcal Q}}     \def\bQ{{\bf Q}}  \def\mQ{{\mathscr Q}}
\def\s{\sigma}  \def\cR{{\mathcal R}}     \def\bR{{\bf R}}  \def\mR{{\mathscr R}}
\def\S{\Sigma}  \def\cS{{\mathcal S}}     \def\bS{{\bf S}}  \def\mS{{\mathscr S}}
\def\t{\tau}    \def\cT{{\mathcal T}}     \def\bT{{\bf T}}  \def\mT{{\mathscr T}}
\def\f{\phi}    \def\cU{{\mathcal U}}     \def\bU{{\bf U}}  \def\mU{{\mathscr U}}
\def\F{\Phi}    \def\cV{{\mathcal V}}     \def\bV{{\bf V}}  \def\mV{{\mathscr V}}
\def\P{\Psi}    \def\cW{{\mathcal W}}     \def\bW{{\bf W}}  \def\mW{{\mathscr W}}
\def\o{\omega}  \def\cX{{\mathcal X}}     \def\bX{{\bf X}}  \def\mX{{\mathscr X}}
\def\x{\xi}     \def\cY{{\mathcal Y}}     \def\bY{{\bf Y}}  \def\mY{{\mathscr Y}}
\def\X{\Xi}     \def\cZ{{\mathcal Z}}     \def\bZ{{\bf Z}}  \def\mZ{{\mathscr Z}}
\def\O{\Omega}

\def\ve{\varepsilon}   \def\vt{\vartheta}    \def\vp{\varphi}    \def\vk{\varkappa}

\def\Z{{\mathbb Z}}    \def\R{{\mathbb R}}   \def\C{{\mathbb C}}
\def\T{{\mathbb T}}    \def\N{{\mathbb N}}   \def\dD{{\mathbb D}}

%%%%% Arrows %%%%%

\def\la{\leftarrow}              \def\ra{\rightarrow}            \def\Ra{\Rightarrow}
\def\ua{\uparrow}                \def\da{\downarrow}
\def\lra{\leftrightarrow}        \def\Lra{\Leftrightarrow}

%%%%% Typography %%%%%

\def\lt{\biggl}                  \def\rt{\biggr}
\def\ol{\overline}               \def\wt{\widetilde}

%%%%% Math signs %%%%%

\let\ge\geqslant                 \let\le\leqslant
\def\lan{\langle}                \def\ran{\rangle}
\def\/{\over}                    \def\iy{\infty}
\def\sm{\setminus}               \def\es{\emptyset}
\def\ss{\subset}                 \def\ts{\times}
\def\pa{\partial}                \def\os{\oplus}
\def\om{\ominus}                 \def\ev{\equiv}
\def\iint{\int\!\!\!\int}        \def\iintt{\mathop{\int\!\!\int\!\!\dots\!\!\int}\limits}
\def\el2{\ell^{\,2}}             \def\1{1\!\!1}
\def\sh{\sharp}

%%%%% Math operations %%%%%

\def\Area{\mathop{\mathrm{Area}}\nolimits}
\def\arg{\mathop{\mathrm{arg}}\nolimits}
\def\const{\mathop{\mathrm{const}}\nolimits}
\def\det{\mathop{\mathrm{det}}\nolimits}
\def\diag{\mathop{\mathrm{diag}}\nolimits}
\def\diam{\mathop{\mathrm{diam}}\nolimits}
\def\dim{\mathop{\mathrm{dim}}\nolimits}
\def\dist{\mathop{\mathrm{dist}}\nolimits}
\def\Im{\mathop{\mathrm{Im}}\nolimits}
\def\Iso{\mathop{\mathrm{Iso}}\nolimits}
\def\Ker{\mathop{\mathrm{Ker}}\nolimits}
\def\Lip{\mathop{\mathrm{Lip}}\nolimits}
\def\rank{\mathop{\mathrm{rank}}\limits}
\def\Ran{\mathop{\mathrm{Ran}}\nolimits}
\def\Re{\mathop{\mathrm{Re}}\nolimits}
\def\Res{\mathop{\mathrm{Res}}\nolimits}
\def\res{\mathop{\mathrm{res}}\limits}
\def\sign{\mathop{\mathrm{sign}}\nolimits}
\def\span{\mathop{\mathrm{span}}\nolimits}
\def\supp{\mathop{\mathrm{supp}}\nolimits}
\def\Tr{\mathop{\mathrm{Tr}}\nolimits}

%%%%%%%%%%%%% specialities %%%%%%%%%%%%%%

\newcommand\nh[2]{\widehat{#1}\vphantom{#1}^{(#2)}}
%{{\mathop{#1}\limits^\wedge}\vphantom{#1}^{(#2)}}
\def\dia{\diamond}

\def\Oplus{\bigoplus\nolimits}

%%%%%%%%%%% End of definitions %%%%%%%%%%

\title[{Matrix-valued Weyl-Titchmarsh functions on [0,1]}]
{Weyl-Titchmarsh functions of vector-valued Sturm-Liouville operators on the unit interval}
\author[Dmitry Chelkak]{Dmitry Chelkak}

\author[Evgeny Korotyaev]{Evgeny Korotyaev}

\date{September 4, 2008}

\subjclass{34A55; (34B24; 47E05)}

\keywords{inverse problem, matrix potentials, $\vphantom{|_{|_|}}M$-function, Sturm-Liouville
operators}

\thanks{\textsc{Dept. of Math. Analysis, St.~Petersburg State University.
Universitetskij pr. 28, Staryj Petergof, 198504 St.~Petersburg, Russia.} \quad Partially
supported by the Foundation of the President of the Russian Federation (grants no.
MK-4306.2008.1 and NSh-2409.2008.1).}

\thanks{\textsc{School of Math., Cardiff University. Senghennydd Road, CF24
4AG Cardiff, Wales, UK.} \quad Partially supported by EPSRC grant EP/D054621.}

\thanks{{\it E-mail addresses:} \texttt{delta4@math.spbu.ru, KorotyaevE@cf.ac.uk}}

\begin{abstract}
The matrix-valued Weyl-Titchmarsh functions $M(\l)$ of vector-valued Sturm-Liouville operators
on the unit interval with the Dirichlet boundary conditions are considered. The collection of
the eigenvalues (i.e., poles of $M(\l)$) and the residues of $M(\l)$ is called the spectral
data of the operator. The complete characterization of spectral data (or, equivalently, $N\ts
N$ Weyl-Titchmarsh functions) corresponding to $N\ts N$ self-adjoint square-integrable
matrix-valued potentials is given, if all $N$ eigenvalues of the averaged potential are
distinct.
\end{abstract}

\maketitle

\section{Introduction}

We start with a short description of known results in the inverse spectral theory for
{\it scalar} Strum-Liouville operators on a {\it finite} interval. We recall only some
important steps mostly focusing on the {\it characterization} problem, i.e., the complete
description of spectral data that correspond to some fixed class of potentials. More
information about different approaches to inverse spectral problems can be found in the
monographs \cite{MaBook}, \cite{LeBook}, \cite{PT}, \cite{FY}, survey \cite{Ges} and
references therein.

The inverse spectral theory goes back to the seminal \mbox{paper \cite{Bo}} (see also
\cite{Levinson}). Borg showed that spectra of two Sturm-Liouville problems $-y''+q(x)y=\l y$,
\mbox{$x\in [0,1]$}, with the same boundary conditions at $1$ but different boundary
conditions at $0$, determine the potential $q(x)$ and the boundary conditions uniquely.
%The next step was done by Marchenko \cite{Ma1}.
Later on, Marchenko \cite{Ma1} proved that the so-called spectral function $\r(\l)$ (or,
equivalently, the Weyl-Titchmarsh function $m(\l)$) determines the potential uniquely. Note
that the spectral function is piecewise-linear outside the spectrum $\{\l_n\}_{n=1}^{+\iy}$
and its jump at $\l_n$ is equal to the so-called {\it normalizing constant} $[\a_n(q)]^{-1}$
given by (\ref{AnScalar}). At the same time, a different approach to this problem was
developed by \mbox{Krein \cite{Kr1}, \cite{Kr1a}, \cite{Kr2}}.

An important result was obtained by Gel'fand and Levitan \cite{GL}. They gave an effective
method to reconstruct the potential $q$ from its spectral function. More precisely, they
derived an integral equation and expressed $q(x)$ explicitly in terms of the solution of
this equation. At that time, there was some gap between necessary and sufficient conditions
for the spectral functions corresponding to fixed classes of $q(x)$.

Some characterization of spectral data for $q$ such that $q^{(m)}\in \cL^1(0,1)$ was derived
by Levitan and Gasymov \cite{LG} for all $m=0,1,2,..$. Also, they gave the solution of the
characterization problem in the case $q''\in\cL^2(0,1)$. Marchenko and Ostrovski \cite{MO}
obtained a sharpening of this result. Namely, for all $m=0,1,2,..$ they gave the complete
solution of the inverse problem in terms of two spectra, if $q^{(m)}\in \cL^2(0,1)$.

Trubowitz and co-authors (Isaacson \cite{IT}, McKean \cite{IMT}, Dahlberg \cite{DT}, P\"oschel
\cite{PT}) suggested another approach. % to the inverse Sturm-Liouville problem.
% on the finite interval with separated boundary conditions.
It is based on the analytic properties of the mapping
$\mathrm{\{potentials\}\mapsto\{spectral\ data\}}$ and the explicit transforms corresponding
to the change of only  a {\it finite} number of spectral parameters
\mbox{$(\l_n(q),\n_n(q))_{n=1}^{+\iy}$}. Their norming constants $\n_n(q)$  differ slightly
from the normalizing constants (\ref{AnScalar}), but the characterizations are equivalent (see
Appendix \ref{AppendixB}). Also, this approach was applied to other scalar inverse problems
with purely discrete spectrum (singular Sturm-Liouville operator on $[0,1]$ \cite{GR};
perturbed harmonic oscillator \cite{MT}, \cite{CKK}, \cite{CK1}).

Thus, nowadays the inverse spectral theory for the {\it scalar} Sturm-Liouville operators is
well understood. By contrast, until recently only some particular results were known for
 {\it vector-valued operators}.

\smallskip

In our paper we consider the inverse problem for the self-adjoint operators
%$L$ acting in $\cL^2([0,1];\C^N)$, given  by
\begin{equation}
\label{Hdef} \bL\p=-\p''+V(x)\p,\qquad \p(0)=\p(1)=0,\quad \p\in\cL^2([0,1];\C^N),
\end{equation}
where $V=\!V^*\!\in\!\cL^2([0,1];\C^{N\ts N})$ is a self-adjoint $N\ts N$ matrix-valued
potential. Denote by $\vp(x)=\vp(x,\l,V)$ and $\c(x)=\c(x,\l,V)$ the matrix-valued solutions
of the equation $-\p''+V(x)\p=\l\p$ such that
\[
\vp(0)=\c(1)=0, \qquad  \vp'(0)=-\c'(1)=I_N,
\]
here and below $I_N$ denotes the identity $N\!\ts\!N$ matrix. Note that
\[
\c(x,\l,V)=\vp(1\!-\!x,\l,V^\sharp),\qquad \mathrm{where}\qquad V^\sharp(x)\equiv V(1\!-\!x),\
x\in [0,1].
\]
The {\it matrix-valued Weyl-Titchmarsh function} for this problem is given by
\begin{equation}
\label{Mdef} M(\l)=M(\l,V)=[\c'\c^{-1}](0,\l,V)=[M(\ol\l)]^*,\quad \l\in\C.
\end{equation}
In the scalar case, the Weyl-Titchmarsh function $m(\l,q)$ is a meromorphic function having
simple poles at Dirichlet eigenvalues $\l_n(q)$ and
\begin{equation}
\label{AnScalar} \res_{\l=\l_n(q)}m(\l,q)=-[\a_n(q)]^{-1}=-\lt[\int_0^1
|\vp(x,\l_n,q)|^2dx\rt]^{-1}.
\end{equation}
So, the sharp characterization of all {\it scalar} Weyl-Titchmarsh functions (or,
equivalently, all spectral data $(\l_n(q),\a_n(q))_{n=1}^{+\iy}$\,) that correspond to
potentials $q\in\cL^2(0,1)$ is available due to \cite{MO} or \cite{PT} (see also
Appendix~\ref{AppendixB}). Namely, the necessary and sufficient conditions are
\begin{equation}
\label{CondScalar}
\begin{array}{cl}
\l_1<\l_2<\l_3<...,\qquad & (\l_n-\pi^2n^2-q_0)_{n=1}^{+\iy}\in\ell^2\quad \mathrm{for\ some}\
\ q_0\in\R \cr \mathrm{and}\vphantom{|^\big|} &(\pi n\cdot
(2\pi^2n^2\a_n(q)-1))_{n=1}^{+\iy}\in\ell^2.
\end{array}
\end{equation}

In the vector-valued case, it is known that the Weyl-Titchmarsh function determines $V$
uniquely (see \cite{Mal} or \cite{Yu}). Some other miscellaneous results concerning
vector-valued Schr\"odinger operators were obtained in \cite{Ca}, \cite{CKper}, \cite{ChSh},
\cite{CHGL}, \cite{JL1}, \cite{JL2}, \cite{SP}, \cite{Sh}. Nevertheless, to the best of our
knowledge, no solutions of the {\it characterization} problems have been available until recently.

Following \cite{CK}, we denote by $\l_1<\l_2<..<\l_\a<...$ the {\it eigenvalues} of $\bL$ and
by $k_\a=\dim\bE_\a\in [1,N]$ their {\it multiplicities}, where $\bE_\a\ss\cL^2([0,1];\C^N)$
is the eigenspace corresponding to the eigenvalue $\l_\a$. Then (see details in \cite{CK}),
the Weyl-Titchmarsh function $M(\l)$ is meromorphic outside the Dirichlet spectrum
$\s(V)=\{\l_\a(V)\}_{\a\ge 1}$ and
\[
\res_{\l=\l_\a}M(\l)= -B_\a= - p_\a^* g_\a^{-1}p_\a,
\]
where
\[
p_\a:\C^N\to\cE_\a =
\Ker\vp(1,\l_\a,V)=\left\{h\in\C^N:\p_{\a;h}=\vp(\cdot,\l_\a,V)h\in\bE_\a\right\}
\]
is the orthogonal {\it projector} and
\[
g_\a= p_\a \lt[\int_0^1[\vp^*\vp](x,\l_\a,V)dx\rt] p_\a^* = g_\a^*>0
\]
is the self-adjoint operator (or the {\it normalizing matrix}) acting in $\cE_\a$. We also use
the notation $P_\a=p_\a^*p_\a^{\,}:\C^N\to\cE_\a\ss\C^N$. Note that for all $h_1,h_2\in\cE_\a$
one has
\[
\langle\p_{\a;h_1},\p_{\a;h_2}\rangle_{\cL^2([0,1];\C^N)} = \int_0^1
h_2^*[\vp^*\vp](x,\l_\a,V)h_1\,dx=\langle h_1,g_\a h_2\rangle_{\cE_\a}\,.
\]
We call $(\l_\a,P_\a,g_\a)_{\a=1}^{+\iy}$ the {\it spectral data} of the operator $\bL$. If
$k_\a=1$, then $g_\a$ acts in the one-dimensional space $\cE_\a$, so we consider it as a
positive real number (and call it, as in the scalar case, the {\it normalizing constant}). The
spectral data determine (e.g., see Proposition \ref{Mformula}) the function $M(\l)$, and so
the potential $V(x)$, uniquely. The main result of our paper is the following solution of the
{\it characterization} problem.

Let $e_1^0,e_2^0,..,e_N^0$ be the standard coordinate basis and $P_j^0=\langle\cdot,
e_j^0\rangle e_j^0$ be the coordinate projectors in $\C^N$. We denote the Euclidian norm of
vectors $h\in\C^N$ and the operator norm of matrices $A\in\C^{N\ts N}$ by $|h|$ and $|A|$,
respectively.

\begin{theorem}[\bf Characterization of spectral data]
\label{MainThm} For all $v_1^0<v_2^0<..<v_n^0$ the mapping
$V\mapsto(\l_\a,P_\a,g_\a)_{\a=1}^{+\iy}$ is a bijection between the space of potentials
\begin{equation}
\label{Vnondeg} V\!=\!V^*\in \cL^2([0,1];\C^{N\ts N})\quad\mathit{such\ that}\ \int_0^1V(x)dx=
\diag\{v_1^0,v_2^0,..,v_N^0\}
\end{equation}
and the class of spectral data satisfying the following conditions (A)-(C):

\smallskip

\noindent {\rm (A)} The spectrum is asymptotically simple, i.e., there exist $\a^\dia\ge 0$,
$n^\dia\ge 1$ such that
\[
k_1^\dia+k_2^\dia+..+k_{\a^\dia}^\dia=N(n^\dia\!-\!1)\quad \mathit{and} \quad k_\a^\dia=1\ \
\mathit{for\ all}\ \a\ge\a^\dia\!+\!1.
\]
It allows us to define the double-indexing $(n,j)$, $n\!\ge\!n^\dia$, $j\!=\!1,2,..,N$,
instead of
$\a\!>\!\a^\dia$. % by $\a-\a^\dia=N(n-n^\dia)+j$,
Namely, we set $\l_{n,j}=\l_{\a^\dia+N(n-n^\dia)+j}$, $P_{n,j}=P_{\a^\dia+N(n-n^\dia)+j}$ and
so on for $n\!\ge\!n^\dia$.

\smallskip

\noindent {\rm (B)} The following hold true for all $j=1,2,..,N$:
\begin{equation}
\label{AsymptInThm}
\begin{array}{lll}
(\lambda_{n,j}\!-\!\pi^2n^2\!-\!v_j^0)_{n=n^\dia}^{+\infty}\in\ell^2,&& (\pi
n\cdot(2\pi^2n^2g_{n,j}\!-\!1))_{n=n^\dia}^{+\infty}\in\ell^2, \cr
(|P_{n,j}\!-\!P_j^0|)_{n=n^\dia}^{+\infty}\in\ell^2\vphantom{|^\big|}&\mathit{and}\ \ & (\pi
n\cdot|{\textstyle\sum_{j=1}^N} P_{n,j}\!-\!I_N|)_{n=n^\dia}^{+\infty}\in\ell^2.
\end{array}
\end{equation}

\smallskip

\noindent {\rm (C)} The collection $(\lambda_\a\,;P_\a)_{\a=1}^{+\infty}$ satisfies the
following property:

\smallskip

\begin{quotation}
\noindent Let $\xi:\C\to \C^N$ be an entire vector-valued function. If
$P_\a\xi(\lambda_\a)\!=\!0$ for all $\a\!\ge\! 1$,\ \
$\xi(\lambda)\!=\!O(e^{|\Im\sqrt\lambda|})$ as $|\lambda|\to\infty$\ \ and\ \ $\xi\in
\cL^2(\R_+)$,\ \ then $\xi(\l)\equiv 0$.
\end{quotation}
\end{theorem}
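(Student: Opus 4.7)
The plan is to split the bijection into its two directions. \emph{Necessity}---that the spectral data of any $V$ in (\ref{Vnondeg}) satisfy (A)--(C)---is an asymptotic analysis of the fundamental solutions. \emph{Sufficiency}---that every data set satisfying (A)--(C) arises from some $V$---is the substantive inverse problem; uniqueness of such a $V$ then follows from injectivity of $V\mapsto M(\l)$ recalled in \cite{Mal}, \cite{Yu}, combined with Proposition \ref{Mformula}.

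\smallskip

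\noindent \textbf{Necessity.} Start from the integral equation for $\vp(x,\l,V)$ and iterate twice to obtain
\[
\vp(1,\l,V)=\frac{\sin\sqrt\l}{\sqrt\l}\,I_N+\frac{\cos\sqrt\l}{2\l}\,\diag\{v_j^0\}+O\lt(\frac{e^{|\Im\sqrt\l|}}{\l^{3/2}}\rt),
\]
uniformly on the contours $|\l|=\pi^2(n+\tfrac12)^2$. A Rouch\'e argument applied to $\det\vp(1,\l,V)$ shows that for all sufficiently large $n$ exactly $N$ eigenvalues cluster near $\pi^2n^2$, and the distinctness of the $v_j^0$ splits them into $N$ simple eigenvalues with $(\l_{n,j}-\pi^2n^2-v_j^0)\in\ell^2$, yielding (A) and the first asymptotic in (B). The residues $-B_{n,j}=-p_{n,j}^*g_{n,j}^{-1}p_{n,j}^{\,}$ of $\c'\c^{-1}$ are expanded using the analogous asymptotics for $\c(x,\l,V)$; matching leading orders gives the required estimates on $P_{n,j}$ and $g_{n,j}$, including $\sum_j P_{n,j}\approx I_N$. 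Condition (C) encodes completeness of the eigenfunction system $\{\p_{\a;h}\}$ in $\cL^2([0,1];\C^N)$: any entire $\x$ of Paley--Wiener type with $P_\a\x(\l_\a)=0$ pairs trivially with every eigenfunction via the transform $h\mapsto\x(\l_\a)$, forcing $\x\ev 0$.

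\smallskip

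\noindent \textbf{Sufficiency.} Given abstract data satisfying (A)--(C), build a candidate Weyl--Titchmarsh function as the Mittag--Leffler series
\[
M(\l)=M_0(\l)-\sum_\a\lt[\frac{B_\a}{\l-\l_\a}-\frac{B_\a^0}{\l-\l_\a^0}\rt],\qquad B_\a=p_\a^*g_\a^{-1}p_\a^{\,},
\]
where $M_0$ and $(\l_\a^0,B_\a^0)$ are the corresponding objects for the reference diagonal potential $V_0=\diag\{v_1^0,..,v_N^0\}$. The estimates in (B) force absolute convergence of the series and imply that $M$ inherits the Herglotz structure and the correct large-$|\l|$ asymptotics. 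Feeding $M$ into a standard inverse procedure---the matrix Gel'fand--Levitan equation, or the analytic-deformation scheme of Trubowitz adapted to the matrix case---produces a candidate $\wt V\in\cL^2([0,1];\C^{N\ts N})$. Matching the next-order coefficient in the asymptotics of the eigenvalues of $\wt V$ to $v_j^0$ forces $\int_0^1\wt V=\diag\{v_1^0,..,v_N^0\}$, so $\wt V$ belongs to the class (\ref{Vnondeg}).

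\smallskip

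\noindent \textbf{Main obstacle.} The delicate step is the final one: showing that the spectral data of $\wt V$ coincide with the originally given abstract data, not merely with some refinement (for instance, $P_\a$ replaced by a strictly larger projector, or a simple $\l_\a$ artificially split into coinciding ones). Asymptotics (B) alone cannot exclude such degeneracies. This is precisely the role of (C): it is the abstract completeness hypothesis guaranteeing that the prescribed ranges of $P_\a$ exhaust $\Ker\vp(1,\l_\a,\wt V)$, thereby closing the bijection.
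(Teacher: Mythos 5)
Your necessity half is essentially the paper's route: asymptotic expansion of $\vp(1,\l,V)$, a matrix Rouch\'e argument to localize and split eigenvalues using the distinctness of the $v_j^0$, expansion of the residues of $\c'\c^{-1}$ for the (B) asymptotics, and a Liouville-type argument for (C). (Your phrasing of the (C) argument --- ``pairs trivially with every eigenfunction'' --- obscures what the paper actually does: form $\o(\l)=[\c(0,\l,V)]^{-1}\x(\l)$, show it is entire and sublinear on circles $|\l|=\pi^2(n+\tfrac12)^2$, conclude it is constant, and then derive a contradiction with $\xi\in\cL^2(\R_+)$ --- but the mechanism is the same.)

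The sufficiency half has a genuine gap. After constructing the candidate $M(\l)$ as a Mittag--Leffler series, you write that feeding it into ``the matrix Gel'fand--Levitan equation, or the analytic-deformation scheme of Trubowitz adapted to the matrix case'' produces the potential. But no such off-the-shelf procedure exists for this class; proving that the abstract data actually arise as spectral data of an $\cL^2$ matrix potential is the entire content of the surjectivity, and the paper's Section~3 is devoted to building exactly the machinery you are invoking. Concretely, the paper does not run a matrix Gel'fand--Levitan step at all. Instead it: (1) splits the given eigenvalues into $N$ scalar sequences and uses the \emph{scalar} characterization \eqref{CondScalar} to build a \emph{diagonal} potential $V^\dia$ whose spectrum is \emph{exactly} $\{\l_\a^\dia\}$ (not merely asymptotically correct, as your constant reference $V_0$ would be); (2) introduces the modified quantities $\wt A_\a,\wt B_\a$ --- these are needed to control the possible splitting of multiple eigenvalues, a subtlety absent from your outline --- and assembles a mapping $\F$; (3) proves $\F$ is real-analytic near $V^\dia$ and, via a Riesz-basis/Fredholm argument on the gradient functions $u_\a^{(jk)},\wt u_\a^{(jk)}$, that $d_{V^\dia}\F$ is invertible, so the Implicit Function Theorem yields a potential with the correct spectrum and all but finitely many residues correct; and (4) fixes the remaining finite set of residues via the explicit isospectral transforms of \cite{CK}, where condition (C) is shown (Proposition~\ref{FaAnd(C)}) to be exactly the admissibility condition $\cF_\a\cap\Ran\wt P_\a=\{0\}$ for those transforms. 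Your ``main obstacle'' paragraph correctly identifies that (C) must prevent degeneracies, but stating that (C) is ``the abstract completeness hypothesis'' does not substitute for an argument; the link between (C) and the local/finite-change construction is where the paper's technical content lies, and your proposal does not supply it.
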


\begin{remark}
\label{Degeneracy} {Let $V=V^*\in\cL^2([0,1];\C^{N\ts N})$. Applying some unitary transform in
$\C^N$, one may always assume that $\int_0^1V(x)dx=\diag\{v_1^0,v_2^0,..,v_N^0\}$, $v_1^0\le
v_2^0\le..\le v_N^0$. Our assumption (\ref{Vnondeg}) states that all the $v_j^0$ are distinct. It
simplifies the analysis, since otherwise infinitely many eigenvalues $\l_\a$ can be multiple.
In particular, in the general case, one has to introduce some other parameters instead of
$(P_{n,j},\,g_{n,j})$.}
\end{remark}

We give also a simple reformulation of the algebraic restriction (C) (note that it doesn't
depend on the shift of the spectrum).

\begin{proposition}[\bf reformulation of (C)]
\label{Creform} Let $\l_\a>0$ for all $\a\ge 1$ and $P_\a = h_\a h_\a^*$, where
$h_\a=(h_\a^{(1)};..\,;h_\a^{(k_\a)})$ consists of $k_\a$ orthonormal vectors
$h_\a^{(j)}\in\C^N$. Then the condition (C) is equivalent to the following:
\begin{quotation}
\noindent Vector-valued functions $e^{\pm i\sqrt{\l_\a}t}h_\a^{(j)}$, $j=1,..,k_\a$, $\a\ge
1$, together with the constant vectors $e_1^0,..,e_N^0$ span $\cL^2([-1,1]\,;\C^N)$.
\end{quotation}
\end{proposition}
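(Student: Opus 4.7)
The plan is to use the Paley--Wiener theorem to translate both (C) and the spanning claim into equivalent statements in the Paley--Wiener space $PW_1(\C^N)$ of entire $\C^N$-valued functions of exponential type $\le 1$ whose restrictions to $\R$ lie in $\cL^2(\R;\C^N)$. Set $z_\a:=\sqrt{\l_\a}>0$, and for $f\in\cL^2([-1,1];\C^N)$ put $F(z):=\int_{-1}^1 f(t)e^{izt}\,dt$; by Paley--Wiener this is a unitary isomorphism $\cL^2([-1,1];\C^N)\to PW_1(\C^N)$. A direct calculation gives
\[
\langle f,\,h_\a^{(j)}e^{\pm iz_\a t}\rangle = (h_\a^{(j)})^*F(\mp z_\a),\qquad \langle f,\,e_j^0\rangle = (e_j^0)^*F(0),
\]
so the spanning claim is equivalent to the implication
\[
(*)\colon\ F\in PW_1(\C^N),\ F(0)=0,\ P_\a F(\pm z_\a)=0\ \text{for all}\ \a\ge 1\ \Longrightarrow\ F\equiv 0.
\]
I will prove $(\mathrm{C})\Leftrightarrow(*)$.

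For $(\mathrm{C})\Rightarrow(*)$, take $F$ as in the hypothesis of $(*)$ and split it into even/odd parts, $F=F_{\text{ev}}+F_{\text{odd}}$. Each summand still satisfies $P_\a F^{\bullet}(\pm z_\a)=0$ and vanishes at $z=0$, so I may factor $F_{\text{ev}}(z)=z^2H_1(z)$ and $F_{\text{odd}}(z)=zH_2(z)$ with $H_1,H_2$ entire and even. Define the entire functions of $\l$ by $\xi_1(\l):=H_1(\sqrt\l)=F_{\text{ev}}(\sqrt\l)/\l$ and $\xi_2(\l):=H_2(\sqrt\l)=F_{\text{odd}}(\sqrt\l)/\sqrt\l$. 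The change of variable $\l=z^2$ turns $\|\xi_i\|_{\cL^2(\R_+)}^2$ into $2\int_0^\infty|F^{\bullet}(z)|^2\,z^{1-2k_i}\,dz$ with $k_1=2$ and $k_2=1$; both integrals converge, near $0$ thanks to the zero of $F^{\bullet}$ there, at infinity thanks to $F\in\cL^2(\R)$. The growth $\xi_i(\l)=O(e^{|\Im\sqrt\l|})$ follows from the exponential-type bound on $F$, and $P_\a\xi_i(\l_\a)=0$ is immediate. Hypothesis $(\mathrm{C})$ then yields $\xi_1\equiv\xi_2\equiv 0$, hence $F\equiv 0$.

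For $(*)\Rightarrow(\mathrm{C})$, let $\xi$ satisfy the hypotheses of (C) and set $F(z):=\xi(z^2)$. This is an even entire function of exponential type $\le 1$, and the identity
\[
\int_\R|F(z)|^2\,dz=\int_0^\infty\frac{|\xi(\l)|^2}{\sqrt\l}\,d\l<\infty
\]
(using that $\xi$ is bounded near $0$ and $\xi\in\cL^2(\R_+)$ at infinity) places $F\in PW_1(\C^N)$; by evenness $P_\a F(\pm z_\a)=P_\a\xi(\l_\a)=0$. The only hypothesis of $(*)$ left to verify is $F(0)=\xi(0)=0$, and this is the crux. Under $(*)$, the subspace $\cW\subset PW_1(\C^N)$ of functions with $P_\a F(\pm z_\a)=0$ for all $\a$ is finite-dimensional (the evaluation map $\cW\to\C^N$, $F\mapsto F(0)$, is injective, so $\dim\cW\le N$), and for each admissible $v\in\C^N$ the unique $F_v\in\cW$ with $F_v(0)=v$ behaves asymptotically along $\R$ like a multiple of $(\sin z)/z\cdot v$ (Hadamard factorization with zeros essentially at $\{\pm z_\a\}$, whose density matches that of $\sin z$ via the spectral asymptotics of Theorem~\ref{MainThm}). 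Consequently $\int_\R|z||F_v(z)|^2\,dz=\infty$ for $v\ne 0$, contradicting $\xi\in\cL^2(\R_+)$; hence $F(0)=0$, $(*)$ forces $F\equiv 0$, and so $\xi\equiv 0$.

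The main obstacle is the last step of the reverse direction: the delicate asymptotic/Hadamard analysis used to rule out the case $F(0)\ne 0$ for $F\in\cW$. It relies on the spectral asymptotics encoded in condition (B), ensuring that the zero-structure of any nonzero $F\in\cW$ essentially matches that of $\sin z$ and thereby preventing the weighted integrability $\int_\R|z||F|^2\,dz<\infty$ that is equivalent to $\xi\in\cL^2(\R_+)$.
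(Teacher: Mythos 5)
Your forward implication $(\mathrm{C})\Rightarrow(*)$ is essentially the paper's argument: the decomposition into $F_{\mathrm{ev}}=z^2H_1$ and $F_{\mathrm{odd}}=zH_2$ and the resulting $\xi_1(\l)=F_{\mathrm{ev}}(\sqrt\l)/\l$, $\xi_2(\l)=F_{\mathrm{odd}}(\sqrt\l)/\sqrt\l$ reproduce exactly the paper's two choices $\xi(z^2)=\frac{1}{2}[f(z)+f(-z)]$ and $\frac{1}{2z}[f(z)-f(-z)]$ with $zf(z)=\widehat\phi(z)$. That part is fine.

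The reverse implication $(*)\Rightarrow(\mathrm{C})$ has a genuine gap, and you correctly sensed where it is but did not resolve it. By choosing $F(z)=\xi(z^2)$ you get $F\in PW_{[-1,1]}$ and $P_\a F(\pm z_\a)=0$, but nothing controls $F(0)=\xi(0)$, so the hypothesis $F(0)=0$ of $(*)$ is unavailable. The repair you sketch (finite-dimensionality of $\cW$, Hadamard factorization, ``zeros essentially matching $\sin z$'') is both non-rigorous and illegitimate here: Proposition~\ref{Creform} assumes \emph{only} $\l_\a>0$, not the spectral asymptotics (B) of Theorem~\ref{MainThm}, so you cannot invoke that asymptotic information. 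The paper avoids the whole difficulty by taking the \emph{odd} function $f(z)=z\,\xi(z^2)$ rather than the even one: then $f(0)=0$ is automatic, $f\in PW_{[-1,1]}$, the Paley--Wiener representation $f=\frac{1}{2\pi}\int_{-1}^1\phi(t)e^{-izt}\,dt$ gives $\int_{-1}^1\phi=f(0)=0$, while $P_\a\xi(\l_\a)=0$ yields $h_\a^*\int_{-1}^1\phi(t)e^{\pm i\sqrt{\l_\a}t}\,dt=0$, and the spanning hypothesis forces $\phi\equiv 0$, hence $\xi\equiv 0$. That multiplication by $z$ is the key trick your proposal misses, and without it your argument does not close.
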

\begin{remark}
In the scalar case, (C) always holds true due to the well known result of Paley and Wiener
(e.g., see \cite{Le} p.47). In the vector-valued case, this condition is not trivial. Some
discussion of (C) is given in Appendix \ref{AppendixA} (see Propositions \ref{xibDef},
\ref{FaAnd(C)}). Note that, if $P_{n,j}=P_j^0$ for all $n\ge m +1$ and $j=1,2,..,N$, then one
can reformulate (C) as the condition $\det \cT\ne 0$ for some $Nm\!\ts\!Nm$ matrix $\cT$ (see
Proposition \ref{FiniteProj}).
\end{remark}

As usual, Theorem \ref{MainThm} consists of several different parts:

\noindent (i) Uniqueness Theorem (spectral data determine the potential uniquely);

\noindent (ii) Direct Problem (spectral data constructed by a given potential satisfy
(A)-(C));

\noindent (iii) Surjection (any data satisfying (A)-(C) are spectral data of some potential).

We do not discuss the uniqueness theorem (i) in our paper and refer to \cite{Mal}, \cite{Yu}
(or \cite{CK}) for this fact. The direct problem (ii) is considered in
Sect.~\ref{SectDirectProblem}. Note that the spectrum is asymptotically simple due to our
assumption $v_1^0<v_2^0<..<v_N^0$ (see also Remark \ref{Degeneracy}).  As in the scalar case,
the Fourier coefficients of $V$ appear as leading terms in the asymptotics of the spectral
data (Propositions \ref{RoughAsymptEV} and \ref{AsymptBn}). We also give the explicit
expression for
%the function
$M(\l)$ in terms of the spectral data in Sect. \ref{SectMFormula}.

The main part of our paper (Sect. \ref{SectInverseProblem}) is devoted to the surjection
(iii). The general strategy of the proof is described in detail in
Sect.~\ref{SectGeneralStrategy}. Here we give only a short sketch of our arguments. We start
with some admissible data $(\l_\a^\dia,P_\a^\dag,g_\a^\dag)_{\a\ge 1}$ satisfying
\mbox{(A)--(C)}. Using the well known characterization (\ref{CondScalar}) for the scalar case,
we construct some special {\it diagonal} potential $V^\dia$ such that
$\s(V^\dia)=\{\l_\a^\dia\}_{\a\ge 1}$.

In \mbox{Sect. \ref{aBnjRoughAsympt}--\ref{SectAnalyticity2}} we introduce some essential {\it
modification} of the spectral data in order (a) to control the splitting of multiple
eigenvalues and (b) to join together all asymptotics in (\ref{AsymptInThm}). We prove that the
mapping $\F:\mathrm{\{potentials\}\mapsto\{modified\ spectral\ data\}}$ is
real-analytic\begin{footnote}{ The mapping $F:U\to H^{(2)}$ between {\it real} Hilbert spaces
$U\ss H^{(1)}$ and $H^{(2)}$ is {\it real-analytic} iff it has continuation $F_\C:U_\C\to
H^{(2)}_\C$ into some {\it complex} neighborhood $U\!\ss\!U_\C\!\ss\!H^{(1)}_\C$ that is {\it
differentiable} as the mapping between the {\it complexifications} $H^{(1)}_\C$, $H^{(2)}_\C$
of the real spaces $H^{(1)}$, $H^{(2)}$.}
\end{footnote}
near $V^\dia$. The main purpose of involving analyticity arguments here is the well known
equivalence of the analyticity and the weak-analyticity\begin{footnote}{In Hilbert spaces, the
weak-analyticity is equivalent to the analyticity of particular coordinates and the local
boundedness, see nice Appendix A in \cite{PT} or the monograph \cite{Di} for
details.}\end{footnote} for mappings between {\it complex} Hilbert spaces. Thus, we
immediately derive the smoothness of the whole mapping $\F$ from the smoothness of its
components.

In \mbox{Sect. \ref{SectFrechetDer1}, \ref{SectFrechetDer2}} we use the Fredholm Alternative
in order to show that $\F$ is a {\it local isomorphism} near $V^\dia$ (i.e., $d_{V^\dia}\F$ is
invertible). Thus, all additional spectral data sufficiently close to
$(P_\a(V^\dia),g_\a(V^\dia))_{\a\ge 1}$ can be obtained from potentials having the same
spectrum $\{\l_\a^\dia\}_{\a\ge 1}$ as $V^\dia$. In particular, if $\a^\bullet$ is large
enough, then there exists $V^\bullet$ such that $\s(V^\bullet)=\{\l_\a^\dia\}_{\a\ge 1}$ and
$(P_\a(V^\bullet),g_\a(V^\bullet))=(P_\a^\dag,g_\a^\dag)$ for all $\a>\a^\bullet$.

We complete the proof in Sect.~\ref{SectChangingFirst} using the explicit isospectral
transforms constructed in our recent paper \cite{CK}. As usual in Trubowitz's approach, we
need to change only some {\it finite} number $\a^\bullet$ of additional spectral data
$(P_\a,g_\a)$. Note that the condition (C) and the restrictions introduced in \cite{CK} in
terms of "forbidden" subspaces are equivalent (see Proposition \ref{FaAnd(C)}). Thus, one can
change any finite number of projectors $P_\a$ in an arbitrary way that doesn't violate (C)
(see details in Sect.~\ref{SectChangingFirst}).

Note that we do not present any explicit {\it reconstruction procedure} for the potential, if
there are {\it infinitely} many perturbed spectral data. The natural idea is to use some
passage to the limit changing the residues $B_\a(V^\dia)\mapsto B_\a^\dag$, $\a=1,2,..$, of
the Weyl-Titchmarsh function step by step. Each step is doable due to isospectral transforms
constructed in \cite{CK} but we do not prove the {\it convergence} of this procedure.

\smallskip

We finish the introduction with several remarks concerning some possible further developments of
our approach to this inverse problem.

\begin{remark}
{\rm The isospectral transforms constructed in \cite{CK} generalize the scalar isospectral
flows (see~\cite{PT}) and some specific class of isospectral transforms given in \cite{JL1}.
Nevertheless, to the best of our knowledge, no analogues of the explicit flows changing the
eigenvalues (see~\cite{PT}) are known in the vector-valued case. We think that such a
construction would simplify the inverse theory a lot. }
\end{remark}

\begin{remark}
{\rm One may be interested in the characterization for other parameters, e.g. the spectra of
several boundary problems (similarly to the original paper \cite{Bo}). \mbox{Almost} nothing
is known here. Yurko \cite{Yu} proved that $N^2\!+\!1$ spectra determine the potential
uniquely. On the other hand, the naive count says that this inverse problem is overdetermined.
Note that, in the spirit of Appendix \ref{AppendixB}, this question can be considered as a
parametrization problem for some class of matrix-valued functions.}
\end{remark}

\begin{remark}
{\rm Consider the Schr\"odinger operator $Hy=-y''+Vy$ on $\R$ with a
$N\!\times\!N$ potential $V=V^*$ such that $\int_\R (1+|x|)|V(x)|dx<+\iy$  (e.g., see \cite{O}). It has a finite
number of eigenvalues $\l_1<..<\l_m<0$ with the multiplicities $k_\a=\dim \bE_\a$, where
$\bE_\a$ is the eigenspace corresponding to $\l_\a$. In order to solve the inverse scattering
problem completely, one needs to characterize the residues of the transmission coefficient
\mbox{at $\l_\a$}. Unfortunately, we do not know any results in this direction. For the
scattering problem on the half-line a characterization was given in \cite{AM} but it
involves {\it implicit} conditions for spectral data (much more complicated than our condition
(C)). }
\end{remark}

\begin{remark}
{\rm In the scalar case, the Dirichlet eigenvalues and the norming constants are canonically
conjugate variables for the Korteweg-de Vries equation with periodic initial conditions (see
\cite{FM}). Similarly, the (negative) eigenvalues and the corresponding normalizing constants
of the (scalar) Schr\"odinger operator $-y''+q(x)y$ on $\R$ with a decreasing potential $q(x)$
are canonically conjugate variables for the Korteweg-de Vries equation (see \cite{ZF}). The
vector-valued case is  more complicated (see \cite{CD1}, \cite{CD2}, \cite{O}). We hope that
our results could be useful from this point of view. }
\end{remark}

\smallskip

\noindent {\bf Acknowledgements.} Some parts of this paper were written at Mathematisches
Forschungsinstitut Oberwolfach, Institut fu\"r Mathematik Humboldt-Universit\"at zu Berlin and
Section de Math\'ematiques Universit\'e de Gen\`eve. The authors are grateful to the
Institutes for the hospitality. The stay of the authors at the MFO was provided by the
Oberwolfach-Leibniz Fellowship of the first author. The authors are grateful to the MFO for
its stimulating atmosphere.

\section{Direct problem}
\setcounter{equation}{0} \label{SectDirectProblem}

\subsection{Asymptotics of the eigenvalues and the individual projectors.}
\label{SectAsymptRough} Denote by
\[
\nh V 0 = \int_0^1 V(t)dt,\quad \nh V {cn} = \int_0^1V(t)\cos 2\pi n t\,dt\quad
\mathrm{and}\quad \nh V{sn} = \int_0^1 V(t)\sin 2\pi nt\,dt
\]
the (matrix) Fourier coefficients of $V$. We start with some elementary asymptotics of the
fundamental solutions $\vp(x,\l,V)$ and \mbox{$\c(x,\l,V)=\vp(1-x,\l,V^\sharp)$} for $\l$
close to~$\pi^2n^2$. It's well known that
\begin{equation}
\label{xVpasympt} \vp(x,z^2,V)=\frac{\sin zx}{z}\,I_N+ \frac{1}{z^2}\int_0^x\sin
z(x\!-\!t)\cdot V(t)\sin zt\, dt + O\lt(\frac{e^{|\Im z|x}}{|z|^3}\rt).
\end{equation}
Here and below constants in $O$--type estimates depend on the potential. In this section we do
not pay the attention to the nature of this dependence. Let
\[
z^2=\pi^2n^2\!+\m,\ \ \m=O(1),\qquad \mathrm{so}\qquad z= \pi n + \frac{\m}{2\pi n} +
O\lt(\frac{1}{n^3}\rt).
\]
Then,
\[
\vp(x,z^2,V)= \frac{\sin zx}{\pi n}\,I_N + \frac{1}{\pi^2n^2}\int_0^x \sin\pi
n(x\!-\!t)\cdot V(t)\sin \pi nt\, dt+O\lt(\frac{1}{n^3}\rt).
\]
In particular,
\begin{equation}
\label{Vp1Asympt} \vp(1,z^2,V)=\frac{(-1)^{n}}{2\pi^2n^2}\left[\m I_N - \nh V0 + \nh
V{cn}+O\lt(\frac{1}{n}\rt)\right].
\end{equation}

\begin{proposition}
\label{RoughAsymptEV} Let $V=V^*\in \cL^2([0,1];\C^{N\ts N})$ satisfy $\nh
V0=\diag\{v_1^0,v_2^0,..,v_N^0\}$ with $v_1^0<v_2^0<..<v_N^0$. Then,

\smallskip

\noindent (i) there exists $n^\dia=n^\dia(V)\ge\|V\|$ such that (a) there are exactly
$N(n^\dia\!-\!1)$ eigenvalues counting with multiplicities in the interval
$(-\pi^2(n^\dia\!-\!1)^2-3\|V\|; \pi^2(n^\dia\!-\!1)^2+3\|V\|)$, (b) \nolinebreak for each
$n\ge n^\dia$ there are exactly $N$ simple eigenvalues $\l_{n,1},\l_{n,2},..,\l_{n,N}$ in the
interval \mbox{$(\pi^2n^2-3\|V\|;\pi^2n^2+3\|V\|)$,} (c) \nolinebreak there are no other
eigenvalues;

\smallskip

\noindent (ii) for each $j=1,2,..,N$ the following asymptotics hold true as $n\to\iy$:
\[
\l_{n,j}=\pi^2n^2+v_j^0- \nh v{cn}_{jj} + O(\d_n(V)),\quad\mathit{where}\quad \d_n(V)=|\nh
V{cn}|^2+\frac{1}{n};
\]
(iii) if $p_{n,j}=\langle\cdot,h_{n,j}\rangle h_{n,j}$, where $h_{n,j}\in\C^N$ is such that
$|h_{n,j}|=1$, $\langle h_{n,j},e_j^0 \rangle>0$, then the asymptotics
\[
h_{n,j}= \left(\begin{array}{ccccccc}\displaystyle \frac{\nh v{cn}_{1,j}}{v_1^0-v_j^0} &
\displaystyle .. & \displaystyle \frac{\nh v{cn}_{j-1,j}}{v_{j-1}^0-v_j^0} & \displaystyle 1 &
\displaystyle \frac{\nh v{cn}_{j+1,j}}{v_{j+1}^0-v_j^0} & \displaystyle .. & \displaystyle
\frac{\nh v{cn}_{N,j}}{v_N^0-v_j^0}\end{array}\right)^\top + O(\d_n(V))
\]
hold true for each $j=1,2,..,N$ as $n\to\iy$.
\end{proposition}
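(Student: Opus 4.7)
The plan is to convert the problem into a study of a matrix-valued entire function: the eigenvalues $\lambda_\alpha$ are exactly the zeros of $\det\vp(1,\lambda,V)$ (with matching multiplicities, since $\bL$ is self-adjoint), so the whole proposition reduces to locating those zeros. Formula (\ref{Vp1Asympt}) already rewrites $\vp(1,\pi^2n^2+\mu,V)$, up to the nonvanishing scalar $(-1)^n/(2\pi^2n^2)$, as
\[
F_n(\mu)\ =\ \mu I_N - \nh V0 + \nh V{cn} + R_n(\mu),\qquad \|R_n(\mu)\|\le C(V)/n,
\]
uniformly for $\mu$ in bounded sets, with $F_n$ analytic in $\mu$. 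I will work entirely with $F_n$.

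For (i), I would first show a local count on the disc $|\mu|\le 3\|V\|$. Since $|\nh V0-\nh V{cn}|\le 2\|V\|$, the matrix $\mu I_N - \nh V0 + \nh V{cn}$ is invertible on the boundary $|\mu|=3\|V\|$, hence so is $F_n$ for $n$ sufficiently large. Continuously deforming $\nh V{cn}\leadsto 0$ and $R_n\leadsto 0$ keeps $\det F_n$ nonzero on this circle, so by the argument principle applied to the scalar entire function $\mu\mapsto\det F_n(\mu)$, it has exactly $N$ zeros inside. To upgrade this local count to the global statement about $(-\pi^2(n^\dia\!-\!1)^2\!-\!3\|V\|,\pi^2(n^\dia\!-\!1)^2\!+\!3\|V\|)$ and to exclude eigenvalues outside the union of these discs, I would combine the local counting with a homotopy $tV$, $t\in[0,1]$: for $V\equiv 0$ the $N(n^\dia-1)$ eigenvalues below $\pi^2(n^\dia)^2$ are the first $n^\dia-1$ Dirichlet eigenvalues of $-\partial_x^2$, each with multiplicity $N$, and along the homotopy no zero can cross the circles chosen above once $n^\dia$ is fixed large enough (depending only on $\|V\|$).

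For (ii) and (iii), once each zero $\mu_{n,j}$ of $\det F_n$ is trapped inside $|\mu|\le 3\|V\|$, I treat $F_n$ as a small matrix perturbation of $\diag\{v_1^0,\ldots,v_N^0\}$. The mild $\mu$-dependence of $R_n(\mu)$ is harmless: a one-step fixed-point/implicit-function argument (using that the unperturbed eigenvalues $v_j^0$ are simple and separated by a fixed gap) gives
\[
\mu_{n,j}\ =\ v_j^0 - \nh V{cn}_{jj} + O\bigl(|\nh V{cn}|^2\bigr) + O(1/n),
\]
the quadratic term being the standard Rayleigh--Schr\"odinger second-order correction coming from off-diagonal entries of $\nh V{cn}$ divided by $v_i^0-v_j^0$. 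Simplicity of the $\mu_{n,j}$ for large $n$ is automatic since the gaps of $\nh V 0$ are fixed and the perturbation tends to $0$. For the eigenvectors, writing the $i$-th coordinate ($i\ne j$) of $F_n(\mu_{n,j})h_{n,j}=0$ yields
\[
(v_i^0-v_j^0)\,h_{n,j}^{(i)} + \nh V{cn}_{ij}\,h_{n,j}^{(j)}\ =\ O(\delta_n(V)),
\]
and combining with $|h_{n,j}|=1$, $\langle h_{n,j},e_j^0\rangle>0$ (so that $h_{n,j}^{(j)}=1+O(\delta_n(V))$) produces the formula in (iii).

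The main obstacle is the global counting in (i): the purely local Rouch\'e argument near each $\pi^2 n^2$ only says there are $N$ eigenvalues there, not that \emph{all} eigenvalues are accounted for. The cleanest fix is the $t\mapsto tV$ homotopy above, coupled with the fact that along it no zero of $\det F_n$ can leave the disc $|\mu|\le 3\|V\|$ (by Rouch\'e uniformly in $t$), so the total count is preserved from $t=0$. All other steps are standard applications of matrix perturbation theory once the $F_n$-formulation is in hand.
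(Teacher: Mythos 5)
Your proposal is correct and follows essentially the same route as the paper: reduce to locating zeros of the matrix-valued function $\vp(1,\lambda,V)$ (the paper equivalently uses $\c(0,\lambda,V)=\vp(1,\lambda,V^\sharp)$), use a Rouch\'e-type count on small circles $|\mu|=3\|V\|$ and on a large disc to nail down (i), then apply finite-dimensional perturbation theory around $\diag\{v_1^0,\dots,v_N^0\}$ for (ii)--(iii). The only cosmetic difference is that the paper handles the global count by a single application of its matrix Rouch\'e lemma on the big disc $|\lambda|<\pi^2n^2+3\|V\|$, comparing directly with $\frac{\sin\sqrt\lambda}{\sqrt\lambda}I_N$, whereas you propose a homotopy $t\mapsto tV$; these are two phrasings of the same argument-principle idea.
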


\noindent Note that the condition $n^\dia(V)\ge\|V\|$ guarantees that the mentioned intervals
do not intersect each other. We need the following simple matrix version of Rouche's Theorem:

\begin{lemma}
\label{MatrixRouche} Let $F,G:\ol{B(w,r)}\to\C$ be analytic matrix-valued functions such that
$|G(\l)|\cdot|F^{-1}(\l)|<1$ for all $\l$ on the boundary of some disc $\ol{B(w,r)}\ss\C$.
Then, the scalar functions $\det F$ and $\det(F\!+\!G)$ have the same number of zeros in
$B(w,r)$ counting with multiplicities.
\end{lemma}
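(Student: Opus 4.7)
The plan is to reduce the matrix statement to the classical scalar Rouch\'e theorem (or, equivalently, to the argument principle) by factoring the determinant. The key algebraic observation is that on the boundary $\pa B(w,r)$ the hypothesis $|G(\l)|\cdot|F^{-1}(\l)|<1$ forces $|F^{-1}(\l)|$ to be finite, hence $F(\l)$ is invertible there and $\det F(\l)\ne 0$ on $\pa B(w,r)$. Therefore we can write
\[
\det(F(\l)+G(\l))=\det F(\l)\cdot\det(I_N+F^{-1}(\l)G(\l)),\qquad \l\in\pa B(w,r).
\]

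Next, I would verify that the second factor is nonzero on $\pa B(w,r)$. Since $|F^{-1}(\l)G(\l)|\le |F^{-1}(\l)|\cdot|G(\l)|<1$, every eigenvalue of $F^{-1}(\l)G(\l)$ has modulus strictly less than $1$, so every eigenvalue of $I_N+F^{-1}(\l)G(\l)$ has modulus in $(0,2)$; in particular the product of these eigenvalues, which is $\det(I_N+F^{-1}(\l)G(\l))$, does not vanish on $\pa B(w,r)$.

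To conclude, I would run a homotopy argument with $H_t(\l):=F(\l)+tG(\l)$ for $t\in[0,1]$. The same estimate shows $|tF^{-1}(\l)G(\l)|<1$ on $\pa B(w,r)$ for every $t\in[0,1]$, so $\det H_t$ is zero-free on $\pa B(w,r)$. By the argument principle applied to the scalar analytic function $\det H_t$,
\[
N(t)\,=\,\frac{1}{2\pi i}\oint_{\pa B(w,r)}\frac{(\det H_t)'(\l)}{\det H_t(\l)}\,d\l
\]
counts the zeros of $\det H_t$ in $B(w,r)$ with multiplicity. Since the integrand depends continuously on $t$ (the denominator stays uniformly bounded away from $0$ on the compact contour by the strict inequality in the hypothesis), $N(t)$ is a continuous integer-valued function on $[0,1]$, hence constant. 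Evaluating at $t=0$ and $t=1$ gives the desired equality.

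There is no real obstacle here; the only point requiring a minor care is that the inequality in the hypothesis is strict and $\pa B(w,r)$ is compact, which is exactly what is needed to get uniform control $\sup_{\l\in\pa B(w,r)}|F^{-1}(\l)G(\l)|<1$ and thereby the continuous dependence of the integrand on $t$.
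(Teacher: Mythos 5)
Your proof is correct, and it uses the same core idea as the paper — control of $\det(F+tG)$ on the boundary via the estimate $|F^{-1}G|<1$, together with the classical argument principle — but with a slightly different organization. You make the homotopy $H_t=F+tG$ explicit and observe that the zero-count integral $N(t)$ is a continuous integer-valued function of $t$, hence constant; this is clean and entirely self-contained. The paper instead factors $\det(F+G)=\det F\cdot\det(I+GF^{-1})$ on the contour and asserts that the increment of $\arg\det(I+GF^{-1})$ along the circle is zero, justifying this by noting that all eigenvalues of $I+GF^{-1}$ have strictly positive real part. That winding-number-zero claim is, implicitly, the same homotopy you perform (contract $I+tGF^{-1}$ to $I$), so the two arguments are essentially equivalent; your version merely spells the deformation out rather than appealing to it in compressed form. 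No gaps.
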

\begin{proof}
We check that $\Delta_\cC\arg (\det F)=\Delta_\cC\arg (\det (F\!+\!G))$, where $\Delta_\cC
\arg f$ denotes the increment of $\arg f$ along the circumference $\cC=\{\l:|\l\!-\!w|=r\}$.
Note that, if $\l\in \cC$, then all eigenvalues of $I\!+\!G(\l)F^{-1}(\l)$ have strictly
positive real parts since $|G(\l)F^{-1}(\l)|<1$. Thus, the result follows from
\[
\Delta_\cC\arg (\det (F\!+\!G))-\Delta_\cC\arg (\det F)= \Delta_\cC\arg (\det
(I\!+\!GF^{-1}))=0
\]
and the classical argument principle.
\end{proof}

\begin{proof}[{Proof of Proposition \ref{RoughAsymptEV}}]
(i) Firstly, we apply Lemma \ref{MatrixRouche} to the function
\[
\c(0,\l,V)=\vp(1,\l,V^\sharp)=F(\l)+G(\l)
\]
in the discs
\[
\{\l:|\l|<\pi^2 n^2\!+\!3\|V\|\} \quad \mathrm{with} \quad
F(\l)=\frac{\sin\sqrt{\l}}{\sqrt\l}\,I_N
\]
(see asymptotics (\ref{xVpasympt})) and
\[
\{\l:\l=\pi^2n^2+\m,\ |\m|<3\|V\|\} \quad \mathrm{with} \quad \displaystyle F(\l)=
\frac{(-1)^{n}}{2\pi^2n^2}\left((\l\!-\!\pi^2n^2)I-\nh V0\right)
\]
(see asymptotics (\ref{Vp1Asympt})). Thus, if $n$ is sufficiently large, then there are
exactly $Nn$ and $N$ eigenvalues (zeros of $\det\c(0,\cdot,V)$), respectively, inside these
discs counting with multiplicities. Secondly, let
\[
\textstyle d=\frac{1}{2}\min_{j=1,..,N-1} (v_{j+1}^0-v_j^0).
\]
If $n$ is sufficiently large, then $|\nh V{cn}|$ is small and one can apply Lemma
\ref{MatrixRouche} (with the same functions $F$ as above) in the discs
\[
\{\l:\l=\pi^2n^2+v_j^0+\m,\ |\m|<d\},\qquad j=1,2,..,N.
\]
So, if $n\ge n^\dia$, then there are exactly one simple eigenvalue
$\l_{n,j}=\pi^2n^2\!+\!\m_{n,j}$ inside each small disc $B(\pi^2n^2\!+\!v_j^0,d)$ and there
are no other eigenvalues.

\smallskip

\noindent (ii) Recall that $\det\vp(1,\l_{n,j},V)=0$. Therefore, due to (\ref{Vp1Asympt}) and
the standard perturbation theory, the self-adjoint matrix $\m_{n,j}I_N-\nh V0 + \nh V{cn}$ has
at least one eigenvalue $\t$ such that $|\t|=O(n^{-1})$. On the other hand, the eigenvalues of
the matrix $\nh V0 - \nh V{cn}$ are $\t_s=v_s^0-\nh v{cn}_{ss} + O(|\nh V{cn}|^2)$,
$s=1,2,..,N$. Hence, for some $s$,
\[ \m_{n,j}-v_s^0+\nh
v{cn}_{ss} = O(|\nh V{cn}|^2) + O(n^{-1}).
\]
Due to (i), $s=j$.

\smallskip

\noindent (iii) Let $j=1$ for the simplicity and $d_k^0=v_1^0-v_k^0$, $k\!=\!2,..,N$. In view
of (\ref{Vp1Asympt}) and (ii),
\[
\vp(1,\l_{n,1},V)= \frac{(-1)^{n}}{2\pi^2n^2}\left(\begin{array}{cccc} 0 & \nh v{cn}_{12} &
... & \nh v{cn}_{1N} \cr \nh v{cn}_{21} & d_2^0\!-\nh v{cn}_{11}\!\!+\!\nh v{cn}_{22} & ... &
\nh v{cn}_{2N} \cr ... & ... & ... & ... \cr \nh v{cn}_{N1} & \nh v{cn}_{2N} & ... &
d_N^0\!-\nh v{cn}_{11}\!\!+\!\nh v{cn}_{NN}
\end{array}\right)\! + O\lt(\frac{\d_n(V)}{n^2}\rt).
\]

\noindent Recall that $\vp(1,\l_{n,1},V)h_{n,1}=0$. Thus,

\smallskip

$\langle\vp(1,\l_{n,1},V)h_{n,1},e_k^0\rangle=0$ gives $\langle h_{n,1},e_k^0\rangle = O(|\nh
V{cn}|+\d_n(V))$ for all $k=2,..,N$,

\smallskip

$|h_{n,1}|=1$ gives $\langle h_{n,1},e_1^0\rangle=1+O(\d_n(V))$

\smallskip

\noindent and, using $\langle\vp(1,\l_{n,1},V)h_{n,1},e_k^0\rangle=0$ again, one obtains
\[
\nh v{cn}_{k1} + d_k^0\cdot \langle h_{n,1},e_k^0\rangle + O(\d_n(V))=0,\quad k=2,..,N.
\]
Note that (ii), (iii) are standard results for the perturbation of a simple eigenvalue.
\end{proof}

\subsection{Asymptotics of the norming constants and the averaged projectors}
\label{SectAsymptFine} %If $v_1^0<v_2^0<..<v_N^0$, then
Due to Proposition \ref{RoughAsymptEV}, all sufficiently large eigenvalues are simple.
Therefore, for all sufficiently large $n\ge n^\dia$ and $j=1,2,..,N$ we may introduce the
factorization
\[
P_{n,j}=h_{n,j}h_{n,j}^*,\qquad B_{n,j}=-\res_{\l=\l_{n,j}}M(\l) =
h_{n,j}g_{n,j}^{-1}h_{n,j}^*=g_{n,j}^{-1}P_{n,j},
\]
where $g_{n,j}>0$, $h_{n,j}\in\C^N$, $|h_{n,j}|=1$ and $\langle h_{n,j},e_j^0\rangle
>0$. Denote
\[
B_n=B_n(V)=\sum_{j=1}^N B_{n,j},\quad n\ge n^\dia.
\]

We begin with some simple reformulations of the needed asymptotics. Note that Proposition
\ref{RoughAsymptEV} gives
\begin{equation}
\label{ProjRoughAsymp} h_{n,j}=e_j^0+\ell^2\quad \mathrm{for\ all}\quad j=1,2,..,N.
\end{equation}
Here and below we write $a_n=b_n+\ell^2_k$ iff
\[
(|a_n\!-\!b_n|)_{n=n^\dia}^{+\iy}\in \ell^2_k=\left\{(c_n)_{n=n^\dia}^{+\iy}:
(n^kc_n)_{n=n^\dia}^{+\iy}\in\ell^2\right\}.
\]
\begin{lemma}
\label{EquivAsymptP}
The following asymptotics are equivalent:
\[
\begin{array}{ll}
(i)  & \sum_{j=1}^N P_{n,j}=I_N+\ell^2_1;\vphantom{|_\big|}\cr (ii) & \langle
h_{n,j},h_{n,k}\rangle=\ell^2_1\quad for\ all\ \ j\ne k,\ j,k=1,2,..,N.
\end{array}
\]
\end{lemma}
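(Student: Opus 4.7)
The plan is to restate both (i) and (ii) as operator-norm estimates on two Hermitian matrices that happen to share the same spectrum, and then conclude via the equivalence of entrywise and operator-norm $\ell^2_1$-control on matrices of fixed size $N$.

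First I would assemble the unit vectors into the $N\!\ts\!N$ matrix $H_n=[h_{n,1}\mid h_{n,2}\mid\cdots\mid h_{n,N}]$. On one hand,
\[
H_nH_n^*=\sum_{j=1}^N h_{n,j}h_{n,j}^*=\sum_{j=1}^N P_{n,j},
\]
so (i) is literally the statement $|H_nH_n^*-I_N|=\ell^2_1$ in operator norm. On the other hand, the $(j,k)$-entry of $H_n^*H_n$ equals $\langle h_{n,j},h_{n,k}\rangle$, and its diagonal entries are \emph{identically} $|h_{n,j}|^2=1$. Hence the diagonal of $H_n^*H_n-I_N$ vanishes, and condition (ii) is equivalent to the statement that every entry of $H_n^*H_n-I_N$ lies in $\ell^2_1$; since $N$ is a fixed integer, this is in turn equivalent to $|H_n^*H_n-I_N|=\ell^2_1$ in operator norm.

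It then remains to observe that $|H_nH_n^*-I_N|=|H_n^*H_n-I_N|$ for every $n$. Both matrices are Hermitian, so each operator norm equals the corresponding spectral radius. Since $H_n$ is square, $H_nH_n^*$ and $H_n^*H_n$ have the same spectrum counting multiplicities (the nonzero eigenvalues are the squared singular values of $H_n$, and the multiplicities of $0$ agree by a dimension count). Subtracting $I_N$ shifts all eigenvalues by $-1$, so the Hermitian matrices $H_nH_n^*-I_N$ and $H_n^*H_n-I_N$ again share their spectra, and in particular their operator norms coincide. Combined with the previous paragraph this yields the desired equivalence (i)$\Leftrightarrow$(ii).

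There is essentially no analytic obstacle here; the one point that warrants care is the exact normalization $|h_{n,j}|=1$, which is what guarantees that (ii), a statement only about off-diagonal Gram entries, already controls \emph{all} of $H_n^*H_n-I_N$ with no residual contribution from the diagonal.
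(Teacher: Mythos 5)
Your proof is correct and follows essentially the same route as the paper: form the matrix $H_n$ whose columns are the $h_{n,j}$, identify $H_nH_n^*$ with $\sum_j P_{n,j}$ and $H_n^*H_n$ with the Gram matrix (whose diagonal is identically $1$), and then compare $|H_nH_n^*-I_N|$ with $|H_n^*H_n-I_N|$. The only cosmetic difference is the justification of that last comparison: the paper invokes the polar decomposition $h_n=u_ns_n$ to get $h_nh_n^*=u_n(h_n^*h_n)u_n^*$ directly as a unitary equivalence, whereas you pass through the equality of spectra of $AB$ and $BA$ together with spectral radius $=$ operator norm for Hermitian matrices; both yield the same conclusion.
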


\begin{proof}
Introduce $N\ts N$ matrices $h_n=(\begin{array}{ccccccc}h_{n,1} &;& h_{n,2} &;& ... &;&
h_{n,N}\end{array})$. Then
\[
h_n h_n^* = \sum_{j=1}^N h_{n,j} h_{n,j}^* = \sum_{j=1}^N P_{n,j}\\[-4pt]
\]
and
\[
h_n^*h_n = \left(h_{n,j}^*\,h_{n,k}\right)_{j,k=1}^N = \left(\langle
h_{n,k}\vphantom{h_{n,j}^*}, h_{n,j} \rangle\right)_{j,k=1}^N\,.
\]
The matrices $h_n h_n^*$ and $h_n^* h_n$ are unitary equivalent (since
$h_nh_n^*=u_n(h_n^*h_n)u_n^*$, where $h_n=u_ns_n$ is the polar decomposition of $h_n$). Thus,
the asymptotics $h_n h_n^*= I_N+\ell^2_1$ are equivalent to the asymptotics
$h_n^*h_n=I_N+\ell^2_1$ (note that $\langle h_{n,j}, h_{n,j}\rangle =|h_{n,j}|^2=1$).
\end{proof}

\begin{lemma}
\label{EquivAsymptB} The collection of asymptotics
\[
g_{n,j}^{-1}=2\pi^2n^2(1+\ell^2_1)\quad for\ all\ j=1,2,..,N\quad and \quad \sum_{j=1}^N
P_{n,j}=I_N+\ell^2_1
\]
is equivalent to
\[
B_n=2\pi^2 n^2 (I+\ell^2_1).
\]
\end{lemma}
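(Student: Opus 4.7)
My plan is to deduce both directions from the decomposition
\[
B_n - 2\pi^2 n^2 I_N \;=\; \sum_{j=1}^N(g_{n,j}^{-1} - 2\pi^2 n^2)P_{n,j} \;+\; 2\pi^2 n^2\Bigl(\sum_{j=1}^N P_{n,j} - I_N\Bigr).
\]
The implication from the individual asymptotics to the joint one is immediate: the first sum has operator norm at most $N\max_j|g_{n,j}^{-1} - 2\pi^2 n^2|$ since each $P_{n,j}$ is a norm-one rank-one projector, which by the first hypothesis lies in $2\pi^2 n^2\ell^2_1$; the second term is in $2\pi^2 n^2\ell^2_1$ by the second hypothesis, so $B_n = 2\pi^2 n^2(I_N + \ell^2_1)$.

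For the converse, let $h_n$ be the matrix with columns $h_{n,1},\ldots,h_{n,N}$ and $D_n = \diag(g_{n,1}^{-1},\ldots,g_{n,N}^{-1})$, so that $B_n = h_n D_n h_n^*$. The rough asymptotics (\ref{ProjRoughAsymp}) give $h_n - I_N \in \ell^2$ entrywise, making $h_n$ invertible for all large $n$ with $h_n^{\pm 1} \to I_N$, and from $B_n = O(n^2)$ it follows that $g_{n,j}^{-1} = O(n^2)$. Writing $Q_n := \sum_j P_{n,j} - I_N = h_n h_n^* - I_N$, the identity $B_n - 2\pi^2 n^2 h_n h_n^* = 2\pi^2 n^2(T_n - Q_n)$ gives the key relation
\[
D_n - 2\pi^2 n^2 I_N \;=\; 2\pi^2 n^2\, h_n^{-1}(T_n - Q_n) h_n^{-*}.
\]
The left-hand side is diagonal, so all off-diagonal entries of the right-hand side vanish; expanding $h_n^{-1} = I_N + O(\ell^2)$ and using that the entries of $h_n^{\pm 1}$ are uniformly bounded, one solves the resulting $N(N-1)$ scalar equations by a contraction argument (valid once $\|h_n - I_N\|$ is small) and obtains the off-diagonal entries of $Q_n$ in $\ell^2_1$. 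The diagonal entries of $Q_n$ are controlled by combining the exact identity $\Tr Q_n = 0$ (because $\Tr P_{n,j} = |h_{n,j}|^2 = 1$) with the explicit formula $(Q_n)_{ii} = \sum_{k\ne i}(|(h_{n,k})_i|^2 - |(h_{n,i})_k|^2)$ and the Hermitian symmetry $(\nh V{cn})^* = \nh V{cn}$ of the Fourier coefficients, which produces a leading-order cancellation of the $|\nh V{cn}|^2$-type terms. Taking diagonal entries of the key identity then delivers $g_{n,j}^{-1} = 2\pi^2 n^2(1 + \ell^2_1)$.

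The main obstacle is the sharpness of the target rate: the rough asymptotics (\ref{ProjRoughAsymp}) place $h_n - I_N$ only in $\ell^2$ rather than $\ell^2_1$, so all the quadratic correction terms generated by expanding $h_n^{\pm 1}$ must be shown to remain in $\ell^2_1$. The key levers are that $h_n - I_N$ has entries uniformly bounded by a constant, giving $\ell^\infty \cdot \ell^2_1 \subset \ell^2_1$ to handle products of correction factors with entries of $T_n$ (or, after bootstrap, of $Q_n$); and the cancellation in $(Q_n)_{ii}$ coming from the self-adjointness of $V$, which is what rescues the diagonal from only being in $\ell^2$.
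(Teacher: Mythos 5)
Your easy direction (individual asymptotics imply $B_n = 2\pi^2 n^2(I+\ell^2_1)$) is correct; the decomposition you write is clean and matches what the paper essentially does. The gap is in the converse, and it is a real one.

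The paper's converse is almost a one-liner built on a single algebraic observation: set $H_n$ to be the matrix with columns $g_{n,j}^{-1/2}h_{n,j}$, so that $B_n = H_n H_n^*$, and note that $H_n H_n^*$ and $H_n^*H_n$ are unitarily similar via the polar factor of $H_n$. Hence $B_n=2\pi^2n^2(I_N+\ell^2_1)$ holds \emph{iff} $H_n^*H_n=2\pi^2n^2(I_N+\ell^2_1)$. But the diagonal of $H_n^*H_n$ is exactly $g_{n,j}^{-1}$ (because $|h_{n,j}|=1$), which gives the first conclusion, and the off-diagonal entries are $g_{n,j}^{-1/2}g_{n,k}^{-1/2}\langle h_{n,k},h_{n,j}\rangle$; dividing out the already-established diagonal asymptotics gives $\langle h_{n,j},h_{n,k}\rangle=\ell^2_1$, which Lemma \ref{EquivAsymptP} converts into $\sum_j P_{n,j}=I_N+\ell^2_1$. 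No perturbative expansion of $h_n^{-1}$ is needed, and the whole argument is uniform in $V$.

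Your route bypasses this unitary equivalence and instead tries to invert $h_n$ and solve a matrix equation. This runs into trouble in two places. First, the ``contraction argument'' for the off-diagonal of $Q_n=\sum_j P_{n,j}-I_N$ is not self-contained: when you expand $h_n^{-1}(T_n-Q_n)h_n^{-*}$ in powers of $E_n=h_n^{-1}-I_N$, the off-diagonal constraints couple the off-diagonal entries of $Q_n$ to its diagonal entries, so you cannot close the loop for the off-diagonal alone. Second, and more seriously, your argument for the diagonal of $Q_n$ uses only the rough asymptotics (\ref{ProjRoughAsymp}) and the Hermitian symmetry of $\nh V{cn}$, and does not use the hypothesis $B_n=2\pi^2n^2(I+\ell^2_1)$ at all. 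That should already be a warning sign. Concretely, the cancellation you invoke gives $(Q_n)_{ii}=O(|\nh V{cn}|\cdot\delta_n(V))$ with $\delta_n(V)=|\nh V{cn}|^2+n^{-1}$, so the dominant term is $O(|\nh V{cn}|^3)$. For general $V=V^*\in\cL^2$ one only has $(\nh V{cn})\in\ell^2$, and $n\,|\nh V{cn}|^3$ need not lie in $\ell^2$ (take $\nh V{cn}$ supported on a sparse sequence $n_k$ with $|\nh V{cn_k}|\sim k^{-1}$ and $n_k$ growing fast). So the Hermitian-symmetry cancellation is not sufficient to put the diagonal of $Q_n$ in $\ell^2_1$; that information has to come from the $B_n$ hypothesis, which is exactly what the unitary equivalence argument delivers cleanly. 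To repair your proof, replace the perturbative inversion of $h_n$ by the observation that $B_n$ is the ``left Gram matrix'' of the rescaled columns $g_{n,j}^{-1/2}h_{n,j}$ and compare it to the ``right Gram matrix''.
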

\begin{proof}
As in Lemma \ref{EquivAsymptP}, we set $H_n=(\begin{array}{ccccccc}
g_{n,1}^{-\frac{1}{2}}h_{n,1} &;& g_{n,2}^{-\frac{1}{2}}h_{n,2} &;& ... &;&
g_{n,N}^{-\frac{1}{2}}h_{n,N}\end{array})$. Note that $B_n= H_n H_n^*$ while
\[
H_n^*H_n = \left(g_{n,j}^{-\frac{1}{2}}g_{n,k}^{-\frac{1}{2}}\cdot \langle h_{n,k}, h_{n,j}
\rangle\right)_{j,k=1}^N\,.
\]
Thus, as above, asymptotics $B_n=2\pi^2n^2(I_N+\ell^2_1)$ and
$H_n^*H_n=2\pi^2n^2(I_N+\ell^2_1)$ are equivalent. The diagonal entries of $H_n^*H_n$ are
$g_{n,j}^{-1}$, so $g_{n,j}^{-1}=2\pi^2n^2(1+\ell^2_1)$. Asymptotics of the non-diagonal
entries give $\langle h_{n,k}, h_{n,j} \rangle=
2\pi^2n^2g_{n,j}^{1/2}g_{n,k}^{1/2}\cdot\ell^2_1=\ell^2_1$, $j\ne k$, which is equivalent to
$\sum_{j=1}^N P_{n,j}=I_N+\ell^2_1$ due to Lemma \ref{EquivAsymptP}.
\end{proof}

Note that, for sufficiently large $n$,
\[%\begin{equation}\label{BnDef}
B_n(V)=%B_n(V)=\sum_{j=1}^N B_{n,j}=
-\sum_{j=1}^N\res_{\l=\l_{n,j}}M(\l)= -\frac{1}{2\pi i}\oint_{|\l-\pi^2n^2|=3\|V\|}M(\l)d\l.
\]%\end{equation}
This formula allows us to determine sharp asymptotics of $B_n(V)$. Moreover, it defines the
analytic continuation of $B_n(V)$ for non-selfadjoint potentials.

\begin{proposition}
\label{AsymptBn} The following asymptotics hold true
\[
B_n(V)=2\pi^2n^2\lt[I_N-\frac{1}{\pi n}\,\nh{[(1\!-\!t)V]}{sn} + O\lt(\frac{1}{n^2}\rt)\rt]
\]
uniformly on bounded subsets of potentials  $V\in \cL^2([0,1];\C^{N\ts N})$.
\end{proposition}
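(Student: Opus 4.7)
The plan is to evaluate the contour integral
$$B_n(V) = -\frac{1}{2\pi i}\oint_{|\lambda - \pi^2 n^2| = 3\|V\|} M(\lambda, V)\,d\lambda$$
asymptotically. Using $\c(x,\l,V) = \vp(1-x,\l,V^\sharp)$, one has $M(\l, V) = -\vp'(1,\l,W)\,[\vp(1,\l,W)]^{-1}$ with $W := V^\sharp$. Substituting (\ref{xVpasympt}) and its $x$-derivative yields
$$\vp(1,z^2,W) = \tfrac{\sin z}{z}\, I_N + \tfrac{1}{z^2}\, A(z) + O\!\big(\tfrac{e^{|\Im z|}}{|z|^3}\big), \qquad \vp'(1,z^2,W) = \cos z\cdot I_N + \tfrac{1}{z}\, B(z) + O\!\big(\tfrac{e^{|\Im z|}}{|z|^2}\big),$$
where $A(z) := \int_0^1 \sin z(1-t)\, W(t) \sin zt\,dt$ and $B(z) := \int_0^1 \cos z(1-t)\, W(t) \sin zt\,dt$ are entire and uniformly $O(1)$ on the contour (since $|\Im z| = O(1/n)$ there).

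Inverting $\vp(1,\cdot,W)$ by Neumann series and multiplying out gives
$$M(\l, V) = -z\cot z\cdot I_N + \tfrac{\cos z}{\sin^2 z}\, A(z) - \tfrac{1}{\sin z}\, B(z) + E(\l),$$
where $E$ collects the higher-order terms of the inverse expansion. The scalar leading piece contributes $2\pi^2 n^2 I_N$ by the classical computation $\res_{\l=\pi^2 n^2}(-z\cot z) = -2\pi^2 n^2$. For the two first-order correction terms I set $u := z - \pi n$ (so $|u| = O(1/n)$ on the contour), use $\sin z = (-1)^n \sin u$, $\cos z = (-1)^n \cos u$, and Taylor-expand in $u$. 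Since $\l - \pi^2 n^2 = 2\pi n\, u + u^2$, the change of variable $d\l = (2\pi n + 2u)\,du$ yields the useful identity
$$\tfrac{1}{2\pi i}\oint [c_3/u^3 + c_2/u^2 + c_1/u + O(1)]\,d\l \;=\; 2\pi n\, c_1 + 2 c_2.$$
Extracting the Laurent coefficients in $u$ of the two first-order correction terms, their total contribution to $B_n$ is $2\pi n (-1)^n [B(\pi n) - A'(\pi n)] + O(1)$; the higher-order terms collected in $E$ contribute only $O(1)$, by the same change of variable combined with the bound $|\sin z| \ge c/n$ on the contour.

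Finally, using $\sin\pi n(1-t) = (-1)^{n+1}\sin\pi n t$ and $\cos\pi n(1-t) = (-1)^n \cos\pi n t$, direct evaluation gives $B(\pi n) = \tfrac{(-1)^n}{2}\,\nh{W}{sn}$ and $A'(\pi n) = \tfrac{(-1)^n}{2}\,\nh{[(1-2t)W]}{sn}$. The algebraic identity $1 - (1-2t) = 2t$ inside the integral produces $B(\pi n) - A'(\pi n) = (-1)^n\,\nh{[tW]}{sn}$; substituting $W(t) = V(1-t)$ and changing variable $s = 1-t$ flips the sign of sine-type Fourier coefficients and gives $\nh{[tW]}{sn} = -\nh{[(1-t)V]}{sn}$. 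Assembling everything yields $B_n(V) = 2\pi^2 n^2 I_N - 2\pi n\,\nh{[(1-t)V]}{sn} + O(1)$, which is the claim. Uniformity on bounded sets of $V$ is automatic since all $O$-constants depend only on $\|V\|_{\cL^2}$. The main obstacle is order-bookkeeping: one must include the $1/u^2$ term (whose contour integral equals $2$, not $0$, because $\l - \pi^2 n^2$ is only linear in $u$ to leading order), verify that Neumann-series terms of order $R_1^k$ for $k \ge 2$ drop out at order $O(1)$ after residue extraction, and carefully match $W = V^\sharp$ with $V$ via the symmetry of the $\sin 2\pi n t$ integrals.
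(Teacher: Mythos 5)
Your proof is correct, and it follows essentially the same contour-integral strategy as the paper, but organizes the residue computation differently. The paper substitutes $\mu=\lambda-\pi^2n^2$ directly, keeps the second-order term $\frac{1}{z^3}\iint$ in the expansion of $\chi(0,\cdot,V)$ so that the normalized error is $O(1/n^2)$, factors $\chi(0,\cdot,V)=\frac{(-1)^n}{2\pi^2n^2}[\mu K_n+L_n+O(1/n^2)]$, and then uses the clean matrix identity $\frac{1}{2\pi i}\oint_{|\mu|=3\|V\|}[\mu I_N+L_nK_n^{-1}]^{-1}d\mu=I_N$; the final formula drops out by multiplying $K_n^{-1}$ against $[I_N-\frac{1}{2\pi n}\nh{V}{sn}]$. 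You instead parametrize by $u=z-\pi n$ (so $d\lambda=(2\pi n+2u)\,du$), stop at the first-order expansion of $\vp(1,z^2,W)$ with $O(e^{|\Im z|}/|z|^3)$ remainder, and do explicit Laurent bookkeeping, observing that the $1/u^2$ coefficient enters with weight $2$ rather than $2\pi n$ — this is precisely what lets you get away with the cruder expansion, since the $O(1/n^3)$ remainder has $c_2=O(1/n)$ and $c_1=O(1/n)$, hence contributes $O(1)$, and the $R_1^k$ Neumann terms for $k\ge 2$ contribute $O(\|V\|^k/n^{k-1})$. The scalar Fourier computations $B(\pi n)=\frac{(-1)^n}{2}\nh{W}{sn}$, $A'(\pi n)=\frac{(-1)^n}{2}\nh{[(1-2t)W]}{sn}$, and the reflection $\nh{[tW]}{sn}=-\nh{[(1-t)V]}{sn}$ for $W=V^\sharp$ all check out and match the paper's final simplification $[I_N-\frac{1}{2\pi n}\nh{V}{sn}]K_n^{-1}=I_N-\frac{1}{\pi n}\nh{[(1-t)V]}{sn}+O(n^{-2})$. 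Your approach trades the paper's structural factorization for a more hands-on residue computation; the only place where you should be more explicit is the uniformity of the remainder's $z$-derivative (needed to bound $c_1$ for the $\delta_1$-term), which follows from a Cauchy estimate applied to the standard bound (\ref{xVpasympt}) in a unit disk around $z=\pi n$.
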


\begin{proof}
It's well known that
\[
\begin{array}{l}
\displaystyle \c(0,z^2,V) = \vp(1,z^2,V^\sharp) = \frac{\sin z}{z}\,I_N +
\frac{1}{z^2}\int_0^1\!\sin z(1\!-\!t)\cdot V^\sharp(t)\sin zt\, dt
\vphantom{\int_\big|}\cr\displaystyle\hphantom{\c(0,z^2,V)} + \frac{1}{z^3}\int_0^1\!dx\sin
z(1\!-\!x)\cdot V^\sharp(x)\int_0^x\!\sin z(x\!-\!t)\cdot V^\sharp(t)\sin zt\, dt +
O\lt(\frac{e^{|\Im z|}}{|z|^4}\rt)
\end{array}
\]
uniformly on bounded subsets of $V$. Substituting $z^2=\pi^2n^2+\m$, $|\m|=3\|V\|=O(1)$, one
obtains
\[
\begin{array}{l} \displaystyle
\c(0,\pi^2n^2\!+\!\m,V)= \frac{(-1)^n\mu}{2\pi^2n^2}\,I_N+ \frac{1}{\pi^2n^2}\lt[\int_0^1 \sin
\pi n (1\!-\!t)\sin \pi nt\cdot V^\sharp(t)dt
\vphantom{\int_\big|}\cr\displaystyle\hphantom{\c(0,\pi^2n^2\!+\!\m}
+\frac{\m}{2\pi{}n}\int_0^1((1\!-\!t)\cos \pi n(1\!-\!t)\sin \pi nt + t\sin\pi n (1\!-\!t)\cos
\pi nt)\cdot V^\sharp(t)dt\rt]
\vphantom{\int_\big|}\cr\displaystyle\hphantom{\c(0,\pi^2n^2\!+\!\m}
+\frac{1}{\pi^3n^3}\int_0^1dx\int_0^x\sin \pi
n(1\!-\!x)\sin\pi{}n(x\!-\!t)\sin\pi{}nt\cdot{}V^\sharp(x)V^\sharp(t)dt
\vphantom{\int_\big|}\cr\displaystyle\hphantom{\c(0,\pi^2n^2\!+\!\m} + O\lt(\frac{1}{n^4}\rt)\
=\ \frac{(-1)^n}{2\pi^2n^2}\lt[\m K_n + L_n + O\lt(\frac{1}{n^2}\rt)\rt],
\end{array}
\]
where the matrices
\[
K_n = I_N+\frac{1}{2\pi n}\,\nh{[(1\!-\!2t)V^\sharp]}{sn} =
I_N+\frac{1}{2\pi{}n}\,\nh{[(1\!-\!2t)V]}{sn},
\]
\[
L_n = -\nh{V^\sharp}{0}+\nh{V^\sharp}{cn}+O\lt(\frac{1}{n}\rt) = -\nh{V}{0}
+\nh{V}{cn}+O\lt(\frac{1}{n}\rt)
\]
do not depend on $\m$. Hence, if $\m=3\|V\|$ and $n$ is sufficiently large, then
\[
\frac{(-1)^n}{2\pi^2n^2}\,[\c(0,\pi^2n^2\!+\!\m,V)]^{-1} = \left[\m
K_n+L_n\right]^{-1}+O\lt(\frac{1}{n^2}\rt).
%= K_n^{-1}[\m + L_nK_n^{-1}]^{-1}+O\lt(\frac{1}{n^2}\rt).
\]
Also, note that
\[
\c'(0,z^2,V) = -\vp'(1,z^2,V^\sharp) = -{\cos z}\,I_N - \frac{1}{z}\int_0^1 \cos z(1-t)\cdot
V^\sharp(t)\sin zt\, dt + O\lt(\frac{e^{|\Im z|}}{|z|^2}\rt).
\]
Therefore,
\[
\c'(0,\pi^2n^2\!+\!\m,V)=
%(-1)^{n-1}\lt[I_N+\frac{1}{2\pi n}\,\nh{V^\sharp}{sn} +O\lt(\frac{1}{n^2}\rt)\rt]=
(-1)^{n-1}\lt[I_N-\frac{1}{2\pi n}\,\nh{V}{sn} +O\lt(\frac{1}{n^2}\rt)\rt]
\]
and
\[
%-\frac{1}{2\pi^2n^2}\,M(\pi^2n^2\!+\!\m)=
-\frac{1}{2\pi^2n^2}\,[\c'\c^{-1}](0,\pi^2n^2\!+\!\m,V) =\lt[I_N-\frac{1}{2\pi
n}\,\nh{V}{sn}\rt]K_n^{-1}\left[\m I_N+L_nK_n^{-1}\right]^{-1}+O\lt(\frac{1}{n^2}\rt).
\]
Since $L_nK_n^{-1}$ doesn't depend on $\m$ and $3\|V\|=|\m|>|L_nK_n^{-1}|$ for sufficiently
large $n$, we have
\[
\frac{1}{2\pi i}\oint_{|\m|=3\|V\|}\left[\m I_N+L_nK_n^{-1}\right]^{-1}d\m = I_N,
\]
and so
\[
\frac{1}{2\pi^2n^2}\,B_n= \lt[I_N-\frac{1}{2\pi n}\,\nh{V}{sn}\rt]K_n^{-1} +
O\lt(\frac{1}{n^2}\rt)=I_N-\frac{1}{\pi n}\,\nh{[(1\!-\!t)V]}{sn} +
O\lt(\frac{1}{n^2}\rt).\qedhere
\]
\end{proof}

\smallskip

\subsection{Proof of the direct part in Theorem \ref{MainThm}}
\label{DirectProblem}
\begin{proof} In fact, all needed asymptotics have been obtained in Sect. \ref{SectAsymptRough},
\ref{SectAsymptFine}. First, asymptotics of the eigenvalues and the individual projectors have
been derived in Proposition \nolinebreak \ref{RoughAsymptEV}. Second, asymptotics of the
norming constants and the averaged projectors follows from Proposition \ref{AsymptBn} and
Lemma \ref{EquivAsymptB}. In order to prove (C) suppose that $\x:\C\to\C^N$ is some entire
vector-valued function such that $P_\a\xi(\lambda_\a)\!=\!0$ for all $\a\!\ge\! 1$,
$\xi(\lambda)\!=\!O(e^{|\Im\sqrt\lambda|})$ as $|\lambda|\to\infty$ and $\xi\in \cL^2(\R_+)$.
Due to Lemma 2.2 \cite{CK},
\[
[\c(0,\l,V)]^{-1}= [\vp^*(1,\ol{\l},V)]^{-1}=
(Z_\a^{-1}+O(\l\!-\!\l_\a))((\l\!-\!\l_\a)^{-1}P_\a + P_\a^\perp)\ \ \mathrm{as}\ \ \l\to\l_\a
\]
for some $Z_\a$ such that $\det Z_\a\ne 0$. Hence, the (vector-valued) function
\[
\o(\l)=[\c(0,\l,V)]^{-1}\x(\l)\,
\]
is entire. It follows from (\ref{xVpasympt}) that
\[
\o(\l)=O(|\l|^{1/2})\quad \mathrm{as}\quad \textstyle |\l|=\pi^2(n+\frac{1}{2})^2\to\infty.
\]
Thus, the Liouville Theorem gives $\o(\l)\equiv\o(0)=\o_0\in\C^N$ and $\x(\l)\equiv
\c(0,\l,V)\o_0$. If $\o_0\ne 0$, then this contradicts to $\xi\in \cL^2(\R_+)$ in view of
asymptotics (\ref{xVpasympt}).
\end{proof}

\subsection{Explicit formula for the Weyl-Titchmarsh function.}
\label{SectMFormula}

In this Sect.~we prove that the Weyl-Titchmarsh function $M(\l,V)$ can be written as the
regularized sum over all its poles. In other words, we give the explicit formula for $M(\l,V)$
involving only the spectral data $\l_\a(V)$ and $B_\a(V)= -\res{}_{\l=\l_\a}M(\l,V)$. The
proof is quite standard.

\begin{proposition}
\label{Mformula} Let $V\!=\!V^*\in \cL^2([0,1];\C^{N\ts N})$ satisfy (\ref{Vnondeg}). Then
\begin{equation}%\[
\label{xM=}
\begin{array}{l}
\displaystyle M(\l) + \sum_{j=1}^N \sqrt{\l\!-\!v_j^0}\,\cot\sqrt{\l\!-\!v_j^0}\cdot P_j^0
\vphantom{\int_\big|}\cr\displaystyle%\hphantom{M(\l}
\vphantom{M(\l)} =\lt[\sum_{\a=1}^{\a^\dia}\frac{B_\a}{\l_\a\!-\!\l} -
\sum_{n=1}^{n^\dia-1}\sum_{j=1}^N\frac{2\pi^2n^2P_j^0}{\pi^2n^2\!+\!v_j^0\!-\!\l}\rt] +
\sum_{n=n^\dia}^{+\iy}\sum_{j=1}^N
\lt[\frac{B_{n,j}}{\l_{n,j}-\!\l}-\frac{2\pi^2n^2P_j^0}{\pi^2n^2\!+\!v_j^0\!-\!\l}\rt].
\end{array}
\end{equation}
%where $\l_\a=\l_\a(V)$ and $B_\a= B_\a(V)=P_\a g_\a^{-1}P_\a= -\res{}_{\l=\l_\a}M(\l,V)$.
The series converge uniformly on compact subsets of $\C$ that do not contain poles.
\end{proposition}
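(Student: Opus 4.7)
The plan is to interpret (\ref{xM=}) as the Mittag--Leffler expansion of $\D(\l):=M(\l)-M_0(\l)$, where
\[
M_0(\l)\,:=\,-\sum_{j=1}^N \sqrt{\l-v_j^0}\,\cot\sqrt{\l-v_j^0}\cdot P_j^0\,=\,M(\l,V_0)
\]
is the Weyl--Titchmarsh function of the constant reference potential $V_0=\diag\{v_1^0,\dots,v_N^0\}$. Denote the right-hand side of (\ref{xM=}) by $F(\l)$.

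First I would verify uniform convergence of $F$ on compact subsets avoiding the poles. Rewriting the generic $(n,j)$-term as
\[
\frac{B_{n,j}}{\l_{n,j}-\l}-\frac{2\pi^2 n^2 P_j^0}{\pi^2 n^2+v_j^0-\l}=\frac{B_{n,j}-2\pi^2 n^2 P_j^0}{\l_{n,j}-\l}+\frac{2\pi^2 n^2 P_j^0\bigl((\pi^2n^2+v_j^0)-\l_{n,j}\bigr)}{(\l_{n,j}-\l)(\pi^2 n^2+v_j^0-\l)},
\]
Proposition \ref{RoughAsymptEV}(ii) bounds the second piece by $O(n^{-2})$ times an $\ell^2$-sequence, which is absolutely summable; for the first, summing over $j$ and applying Proposition \ref{AsymptBn} gives $\sum_j(B_{n,j}-2\pi^2 n^2 P_j^0)=B_n-2\pi^2 n^2 I_N=O(n)\cdot\ell^2_n$, whose contribution is $O(n^{-1})$ times an $\ell^2$-sequence, also in $\ell^1$ by Cauchy--Schwarz.

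Next I would match pole data. $M$ has simple poles at $\l_\a$ with residue $-B_\a$, while $M_0$ has simple poles at $\pi^2 n^2+v_j^0$ with residue $-2\pi^2 n^2 P_j^0$ (since $\res_{u=\pi n}u\cot u=\pi n$). The principal parts of $\D$ and $F$ therefore coincide at every singularity, so $D(\l):=\D(\l)-F(\l)$ extends to an entire matrix-valued function.

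To conclude $D\equiv 0$, I would restrict $\l$ to the circles $C_n:=\{|\l|=\pi^2(n+\tfrac12)^2\}$, which for $n\ge n^\dia$ avoid all poles and satisfy $|\sin\sqrt\l|\ge c\,e^{|\Im\sqrt\l|}$. Using (\ref{xVpasympt}), (\ref{Vp1Asympt}) and a Taylor expansion of $\sqrt{\l-v_j^0}\cot\sqrt{\l-v_j^0}$ around $v_j^0=0$, one obtains $M(\l)=-\sqrt\l\cot\sqrt\l\cdot I_N+O(1)=M_0(\l)+O(1)$ uniformly on $C_n$, so $|\D|=O(1)$ there; a direct termwise estimate gives $|F|=O(1)$ on $C_n$ as well, so $D$ is bounded on every $C_n$ and hence globally by the maximum principle, and constant by Liouville. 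Passing to the limit $\l=-\m^2\to-\iy$ along the negative real axis, a Laplace-type estimate (exploiting $\int_0^1(V-V_0)\,dx=0$) yields $\D(-\m^2)\to 0$, and dominated summation gives $F(-\m^2)\to 0$, so $D\equiv 0$. The principal technical point is the exact cancellation of the leading $-\sqrt\l\cot\sqrt\l\cdot I_N$ terms of $M$ and $M_0$ on the $C_n$; this rests on the normalization $\int_0^1 V\,dx=V_0$ built into (\ref{Vnondeg}). With that cancellation in hand, everything else reduces to routine residue calculus.
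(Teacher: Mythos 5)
Your strategy coincides with the paper's: identify both sides of (\ref{xM=}) as meromorphic functions with the same principal parts, observe the difference is entire, and kill it with a Liouville argument using estimates on the circles $C_n=\{|\l|=\pi^2(n+\frac12)^2\}$.

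There is, however, a real imprecision in your convergence step. In your decomposition the first piece is $\frac{B_{n,j}-2\pi^2 n^2 P_j^0}{\l_{n,j}-\l}$, whose denominator varies with $j$. From Proposition~\ref{RoughAsymptEV} and Lemma~\ref{EquivAsymptB} one only knows $B_{n,j}-2\pi^2 n^2 P_j^0=O(n^2)\cdot\ell^2$ individually, so each such term is $O(1)\cdot\ell^2$, which is square-summable but \emph{not} summable. The crucial cancellation in Proposition~\ref{AsymptBn} acts on the $j$-summed quantity $B_n-2\pi^2 n^2 I_N=O(n)\cdot\ell^2$, and you cannot simply add numerators when the denominators $\l_{n,j}-\l$ differ. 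To exploit the cancellation you must first replace $\l_{n,j}-\l$ by a $j$-independent denominator, say $\pi^2 n^2-\l$ (the paper's choice), writing
\[
\frac{B_{n,j}-2\pi^2 n^2 P_j^0}{\l_{n,j}-\l}
=\frac{B_{n,j}-2\pi^2 n^2 P_j^0}{\pi^2 n^2-\l}
-\frac{(\l_{n,j}-\pi^2 n^2)\,(B_{n,j}-2\pi^2 n^2 P_j^0)}{(\l_{n,j}-\l)(\pi^2 n^2-\l)}\,,
\]
where the second summand is of size $O(n^{-2})\cdot\ell^2$ and hence absolutely summable on its own. With this insertion your Cauchy--Schwarz estimate is legitimate and the convergence claim holds. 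Your concluding step (Liouville gives a constant; sending $\l\to-\infty$ forces it to vanish) is correct but slightly longer than the paper's, which sharpens the $O(1)$ estimates on the $C_n$ to $o(1)$ directly — the normalization $\int_0^1 V\,dt=\diag\{v_j^0\}$ is what produces the next-order cancellation beyond $-\sqrt\l\cot\sqrt\l\cdot I_N$ and makes both sides tend to zero on the circles, obviating the extra limit along the negative real axis.
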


\begin{proof}
Note that
\[
\begin{array}{l}
\displaystyle
D_{n,j}(\l)=\frac{B_{n,j}}{\l_{n,j}-\!\l}-\frac{2\pi^2n^2P_j^0}{\pi^2n^2\!+\!v_j^0\!-\!\l}
\vphantom{\int_\big|}\cr\displaystyle\hphantom{D_{n,j}(\l)}  =
\frac{B_{n,j}-2\pi^2n^2P_j^0}{\pi^2n^2-\l} -
\frac{v_j^0(B_{n,j}-2\pi^2n^2P_j^0)}{(\pi^2n^2\!-\!\l)(\pi^2n^2\!+\!v_j^0\!-\!\l)} -
\frac{(\l_{n,j}\!-\!\pi^2n^2\!-\!v_j^0)B_{n,j}}{(\l_{n,j}\!-\!\l)(\pi^2n^2\!+\!v_j^0\!-\!\l)}\,.
\end{array}
\]
Due to Proposition \ref{AsymptBn}, for the first terms one has
\[
D_n^{(1)}(\l)=\sum_{j=1}^N\frac{B_{n,j}-2\pi^2n^2P_j^0}{\pi^2n^2-\l}= \frac{\sum_{j=1}^N
B_{n,j}-2\pi^2n^2I_N}{\pi^2n^2\!-\!\l}= \frac{n\cdot x_n}{\pi^2n^2\!-\!\l},
\]
where $(x_n)_{n=n^\dia}^{+\iy}\in\ell^2$. In particular, the series
$\sum_{n=n^\dia}^{+\iy}D_n^{(1)}(\l)$ uniformly converges outside singularities. Moreover,
\[
\lt|\sum_{n=n^\dia}^{+\iy}D_n^{(1)}(\l)\rt| \le \frac{1}{\pi^2}\sum_{n=n^\dia}^{+\iy}
\frac{|x_n|}{|n\!-\!(m\!+\!\frac{1}{2})|} \to 0\quad \mathrm{as}\ \
|\l|=\pi^2(m\!+\!{\textstyle\frac{1}{2}})^2\to\iy.
\]
Since $B_{n,j}=2\pi^2n^2(P_j^0+\ell^2)$ and $\l_{n,j}=\pi^2n^2+v_j^0+\ell^2$, the similar
results hold true for the sums of second and third terms of $D_{n,j}(\l)$.

Thus, the right-hand side of (\ref{xM=}) converges outside singularities and tends to zero as
$|\l|=\pi^2n^2(m\!+\!\frac{1}{2})^2\to\iy$. It follows from the standard asymptotics of
fundamental solutions that the left-hand side of (\ref{xM=}) also tends to zero as
$|\l|=\pi^2n^2(m\!+\!\frac{1}{2})^2\to\iy$. Since the residues of both sides at singularities
coincide, (\ref{xM=}) holds true for all $\l$.
\end{proof}

\section{Inverse problem}
\setcounter{equation}{0} \label{SectInverseProblem}

\subsection{Proof of the surjection part in Theorem \ref{MainThm}. General strategy.} $\vphantom{x}$
\label{SectGeneralStrategy}

\smallskip

\noindent {\bf Step 1.} Let some data $(\l_\a^\dia,P_\a^\dag,g_\a^\dag)_{\a\ge 1}$ satisfy
conditions (A)--(C) in Theorem \ref{MainThm} and $B_\a^\dag=P_\a^\dag (g_\a^\dag)^{-1}
P_\a^\dag$ (we use different superscript $\dia$ for eigenvalues in order to make the further
presentation more clear). Consider eigenvalues $\l_\a^\dia$ (possibly multiple for several
first $\a$). One can split them into $N$ {\it simple} series $\{\l_{n,j}^\dia\}_{n=1}^\infty$,
$j=1,2,..,N$ such that
\[
\{\l_{n,1}^\dia\}_{n=1}^{+\infty}\cup\{\l_{n,2}^\dia\}_{n=1}^{+\infty}\cup..\cup\{\l_{n,N}^\dia\}_{n=1}^{+\infty}
= \{\l_\a^\dia\}_{\a\ge 1}
\]
(counting with multiplicities) and $\l_{n,j}^\dia=\pi^2n^2+v_j^0+\ell^2$ for all $j=1,2,..,N$.

Using the well known scalar inverse theory (see (\ref{CondScalar})) we construct some scalar
potentials $v_{jj}^\dia\in \cL^2([0,1])$ such that
\[
\int_0^1 v_{jj}^\dia(t)dt=v_j^0\qquad \mathrm{and}\qquad
\s(v_{jj}^\dia)=\{\l_{n,j}^\dia\}_{n=1}^{+\infty}\,.
\]
Note that the corresponding isospectral sets are infinite dimensional manifolds, so there are
infinitely many choices for each $v_{jj}^\dia$. For technical reasons, {\it we choose
$v_{jj}^\dia$ such that}
\[
g_n^{-1}(v_{jj}^\dia)= -\res_{\l=\l_{n,j}}m(\l,v_{jj}^\dia)=2\pi^2n^2\quad \mathrm{for\ all\
sufficiently\ large}\ n,
\]
where $m(\l,v_{jj}^\dia)$ is the Weyl-Titchmarsh function of the scalar potential
$v_{jj}^\dia$, and
\[
\c'(0,\l_\a,v_{jj}^\dia)\ne 0,\quad \mathrm{i.e.,}\ \ m(\l_\a,v_{jj}^\dia)\ne 0,\quad
\mathrm{for\ all}\ \a\ge 1.
\]
(one can always choose such $v_{jj}^\dia$ in two steps: taking the scalar $m$-function with
all residues equal to $-2\pi^2n^2$ and changing the first residue slightly in order to
guarantee $m(\l_\a,v_{jj}^\dia)\ne 0$ for all $\a\ge 1$). Let
\[
V^\dia=\diag\{v_{11}^\dia,v_{22}^\dia,..,v_{NN}^\dia\}.
\]
Thus, $\s(V^\dia)=\{\l_\a^\dia\}_{\a\ge 1}$ counting with multiplicities. Denote
\[
B_\a^\dia=p_\a^\dia (g_\a^\dia)^{-1} (p_\a^\dia)^*=B_\a(V^\dia).
\]
Since $V^\dia$ is a diagonal potential, each subspace $\cE_\a^\dia$ is spanned by some (one,
if $\a$ is large enough) standard coordinate vectors $e_j^0$ and all $P_\a^\dia$ are
coordinate projectors.

\medskip

\noindent {\bf Step 2.} Let
\[
A_\a(V) = M^{-1}(\l_\a^\dia) = [\c(\c')^{-1}](0,\l_\a^\dia,V)
\]
and
\begin{equation} \label{A11def}
\begin{array}{ll}
\displaystyle \vphantom{\big|_|} A_\a^{11}=p_\a^\dia A_\a
(p_\a^\dia)^*:\cE_\a^\dia\to\cE_\a^\dia, & \displaystyle A_\a^{12}=p_\a^\dia A_\a
(q_\a^\dia)^*:(\cE_\a^\dia)^\perp\to\cE_\a^\dia, \cr \displaystyle \vphantom{\big|^|}
A_\a^{21}=q_\a^\dia A_\a (p_\a^\dia)^*:\cE_\a^\dia\to (\cE_\a^\dia)^\perp, & A_\a^{22}=
q_\a^\dia A_\a (q_\a^\dia)^*: (\cE_\a^\dia)^\perp \to (\cE_\a^\dia)^\perp,
\end{array}
\end{equation}
where $p_\a^\dia:\C^N\to\cE_\a^\dia$, $q_\a^\dia:\C^N\to(\cE_\a^\dia)^\perp$ are the
coordinate projectors.  Note that
\[
A_\a^{11}(V^\dia)=0,\ \ A_\a^{12}(V^\dia)=0,\ \ A_\a^{21}(V^\dia)=0\ \ \mathrm{and}\ \ \det
A_\a^{22}(V^\dia)\ne 0\ \ \mathrm{for\ all}\ \a\ge 1
\]
due to $p_\a^\dia \c(0,\l_\a^\dia,V^\dia)= [\vp(1,\l_\a^\dia,V^\dia)(p_\a^\dia)^*]^*=0$ and
$\det \c'(0,\l_\a^\dia,V^\dia)\ne 0$.

\smallskip

In order to describe some neighborhood of the isospectral set $\Iso(V^\dia)$ near $V^\dia$, we
introduce $k_\a\ts k_\a$ matrices (more accurate, operators in the coordinate subspaces
$\cE_\a^\dia$)
\begin{equation}
\label{aDef} \wt{A}_\a(V) = \left[A_\a^{11}-
A_\a^{12}(A_\a^{22})^{-1}A_\a^{21}\right](V),\quad \a\ge 1.
\end{equation}
Then (see Proposition \ref{aBnjRoughAsympt} and Lemma \ref{ABselfadjoint})

\smallskip

(i) all $\wt{A}_\a(V)$ are well-defined in some {\it complex} neighborhood
$\cB(V^\dia,r^\dia)$ of $V^\dia$;

(ii) for $V=V^*\in \cB(V^\dia,r^\dia)$ one has $\wt{A}_\a(V)=[\wt{A}_\a(V)]^*$ and the
following holds:
\begin{center}
{\it $\wt{A}_\a(V)=0$ iff $\l_\a^\dia$ is an eigenvalue of $V$ of multiplicity $k_\a$.}
\end{center}

\smallskip

\noindent Furthermore, for potentials $V$ sufficiently close to $V^\dia$, we set
\begin{equation}
\label{wtBdef} \wt{B}_\a(V)= -\frac{1}{2\pi i}\oint_{|\l-\l_\a^\dia|=d^\dia}M(\l,V)d\l,
\quad\mathrm{where}\quad d^\dia=\textstyle\frac{1}{2}\min_{\a\ge
1}(\l_{\a+1}^\dia\!-\!\l_\a^\dia)>0.
\end{equation}
If $k_\a^\dia=1$, then $M(\l)$ has exactly one simple pole inside this contour, so
\mbox{$\wt{B}_\a(V)\!=\!B_\a(V)$}. If $k_\a^\dia>1$, we do not know precisely how the multiple
eigenvalue $\l_\a^\dia$ is split, so $\wt{B}_\a(V)$ denotes the sum of all corresponding
residues. Then (see Proposition \ref{aBnjRoughAsympt}, Lemma \ref{ABselfadjoint})

\smallskip

(i) all $\wt{B}_\a(V)$ are well-defined in some {\it complex} neighborhood
$\cB(V^\dia,r^\dia)$ of $V^\dia$;

(ii) for $V\!=\!V^*\!\in\!\cB(V^\dia,r^\dia)$ one has $\wt{B}_\a\!=\!\wt{B}_\a^*$, $\rank
\wt{B}_\a\!=\!k_\a^\dia$ and the following holds:
\[
\wt{A}_\a(V)=0\quad \Rightarrow\quad \wt{B}_\a(V)=B_\a(V).
\]
In other words, $\wt{B}_\a(V)$ is the analytic continuation of $B_\a(V)$ from the isospectral
set $\Iso (V^\dia)$ into some complex neighborhood of $V^\dia$ (emphasize that, due to the
\mbox{possible} splitting of the eigenvalue $\l_\a^\dia$ in case $k_\a^\dia>1$, the original
function $B_\a(V)$ is {\it discontinuous} even for self-adjoint potentials close to $V^\dia$).

\medskip

\noindent {\bf Step 3.} We introduce the mapping
\[
\wt{\F}:V\mapsto (\wt{A}_\a(V);\wt{B}_\a(V))_{\a\ge 1}
\]
which is defined in some complex neighborhood $\cB(V^\dia,r^\dia)$ of $V^\dia$ (see
\mbox{Sect. \ref{SectRoughAsymptAB}}). We prove that $\wt{\F}$ maps $\cB(V^\dia,r^\dia)$ into
some "proper" $\ell^2$-type space. In order to have the "nice" description of the image space,
we consider some {\it modification} $\F$, see details in Sect. \ref{SectAnalyticity1},
\ref{SectAnalyticity2}. The modified mapping $\F$ is analytic in $\cB(V^\dia,r^\dia)$, so its
restriction onto self-adjoint potentials close to $V^\dia$ is real-analytic. Note that, if
$V=V^*$, then both $k_\a^\dia\!\ts\!k_\a^\dia$ matrix $\wt{A}_\a$ and $N\!\ts\!N$ matrix
$\wt{B}_\a$, \mbox{$\rank\wt{B}_\a=k_\a^\dia$}, are self-adjoint. So, the total number of
(real) parameters in $(\wt{A}_\a(V),\wt{B}_\a(V))$ is
$(k_\a^\dia)^2+k_\a^\dia(2N\!-\!k_\a^\dia)=2Nk_\a^\dia$.

\medskip

\noindent {\bf Step 4} We check that the Fr\'echet derivative $d_{V^\dia}\F$ of the modified
mapping $\F$ at the point $V^\dia$ is invertible (see details in Sect. \ref{SectFrechetDer1},
\ref{SectFrechetDer2}) . Therefore, due to the Implicit Function Theorem, for each sequence
$(B_\a^\bullet)_{\a\ge 1}$ sufficiently close to $(B_\a^\dia)_{\a\ge 1}$ there exists some
potential $V^\bullet$ (close to $V^\dia$) such that $\wt{A}_\a(V^\bullet)=\wt{A}_\a(V^\dia)=0$
and $\wt{B}_\a(V^\bullet)=B_\a^\bullet$ for all $\a\ge 1$. If $\a^\bullet$ is large enough,
then the sequence
\[
B_\a^\bullet := B_\a^\dia,\ \ \mathrm{if}\ \a\le\a^\bullet,\quad  \mathrm{and}\quad
B_\a^\bullet:=B_\a^\dag,\ \ \mathrm{if}\ \a>\a^\bullet,
\]
is close to $(B_\a^\dia)_{\a\ge 1}$. Thus, we obtain some potential $V^\bullet$ such that
\[
\wt{A}_\a(V^\bullet)= 0 \quad \mathrm{for\ all}\ \a\ge 1,\qquad \mathrm{i.e.,}\quad
\s(V^\bullet)=\{\l_\a^\dia\}_{\a\ge 1}
\]
(counting with multiplicities) and
\[
B_\a(V^\bullet)=\wt{B}_\a(V^\bullet)=B_\a^\dag,\quad \mathrm{for}\ \a>\a^\bullet.
\]
Finally, using the isospectral transforms constructed in \cite{CK}, we change the {\it finite}
number of residues $B_\a$, $\a=1,2,..,\a^\bullet$  (see details in Sect.
\ref{SectChangingFirst}), and obtain the potential having the given spectral data
$(\l_\a^\dia,B_\a^\dag)_{\a\ge 1}$ or, equivalently, $(\l_\a^\dia,P_\a^\dag,g_\a^\dag)_{\a\ge
1}$. \qed

\subsection{Rough asymptotics of $\bf \wt{A}_\a(V)$ and $\bf \wt B_\a(V)$.}

\label{SectRoughAsymptAB}

This section contains some preliminary calculations. Loosely speaking, we consider the
diagonal potential $V^\dia$ as the unperturbed case and derive some rough asymptotics of
spectral data for $V$ close to~$V^\dia$. The main results are formulated in \mbox{Proposition
\ref{aBnjRoughAsympt}} and Lemma \ref{ABselfadjoint}.

Let $\vp^\dia,\vt^\dia,\c^\dia,\e^\dia$ be the standard {\it diagonal} matrix-valued solutions
(recall that $V^\dia$ is diagonal) of the equation $-\p''(x)+V^\dia(x)\p(x)=\l\p(x)$
satisfying the following boundary conditions:
\[
\begin{array}{l}
\displaystyle \vt^\dia(0)=(\vp^\dia)'(0)=I_N, \cr \displaystyle (\vt^\dia)'(0)=\vp^\dia(0)=0,
\end{array}
\qquad
\begin{array}{l}
\displaystyle \e^\dia(1)=-(\c^\dia)'(1)=I_N, \cr \displaystyle (\e^\dia)'(1)=\c^\dia(1)=0.
\end{array}
\]
We denote $\vp^\dia_\a(x)=\vp^\dia(x,\l_\a^\dia)$, $\vt^\dia_\a(x)=\vt(x,\l_\a^\dia)$ and so
on. Let
\[
J^\dia(x,t)= \vp^\dia(x)\vt^\dia(t)-\vt^\dia(x)\vp^\dia(t)=
-\c^\dia(x)\e^\dia(t)+\e^\dia(x)\c^\dia(t)
\]
be the (diagonal) solution of the same equation such that $J^\dia(t,t)=0$, $
(J^\dia)'_x(t,t)=I_N$.

Let $V=V^\dia+W$ be some complex potential close to $V^\dia$. Then $\c(x,\l,V)$ can be easily
constructed by iterations with the kernel $J^\dia(x,t)$ (note that
$|J^\dia(x,t;z^2)|=O(|z|^{-1}{e^{|\Im z|\cdot|x-t|}})$) starting with $\c^\dia(x,\l)$ . Thus,
\begin{equation}
\label{ChiW=} \c(0,z^2,V)=\c^\dia(0,z^2)+\int_0^1\vp^\dia(t,z^2)W(t)\c^\dia(t,z^2)dt+
O\lt(\frac{\|W\|^2e^{|\Im z|}}{|z|^3}\rt),
\end{equation}
\begin{equation}
\label{Chi'W=} \c'(0,z^2,V)=(\c^\dia)'(0,z^2)-\int_0^1\vt^\dia(t,z^2)W(t)\c^\dia(t,z^2)dt+
O\lt(\frac{\|W\|^2e^{|\Im z|}}{|z|^2}\rt)
\end{equation}
uniformly on bounded subsets of $W$. In particular (see \ref{Vp1Asympt}), if $\m=O(1)$, then
\begin{equation}
\label{ChiDia0=}
\begin{array}{rcl}
\c(0,\l_{n,j}^\dia\!+\!\m,V) &\!\!\!=\!\!\!&
\displaystyle\frac{(-1)^n}{2\pi^2n^2}\left(\vphantom{|^|_|}\diag\{\m\!-\!v_j^0\!+\!v_1^0, ..,
\m\!-\!v_j^0\!+\!v_N^0\}\!+\!o(1)\!+\!O(\|W\|)\right),\cr \displaystyle\vphantom{|^\big|}
\c'(0,\l_{n,j}^\dia\!+\!\m,V) &\!\!\!=\!\!\!& (-1)^{n-1}\!\left(I_N+O(n^{-1})\right), \qquad
\mathrm{as}\ \ n\to\iy,
\end{array}
\end{equation}
uniformly on bounded subsets of $W$. Recall that $A_\a(V)\!=\![\c(\c')^{-1}](0,\l_\a^\dia,V)$
and its block $A_\a^{22}=q_\a^\dia A_\a(q_\a^\dia)^*$ are given by (\ref{A11def}) and
$d^\dia=\frac{1}{2}\min_{\a\ge 1}(\l_{\a+1}^\dia\!-\!\l_\a^\dia)>0$.

\begin{lemma}
\label{xABwell-def} There exists $r^\dia>0$ such that for all (possibly non-selfadjoint)
potentials
\[
V\in \cB(V^\dia,r^\dia)= \left\{V\in \cL^2([0,1];\C^{N\ts N}):\|V\!-\!V^\dia\|<r^\dia\right\}
\]
the following is fulfilled for all $\a\ge 1$:
\[
\det\c'(0,\l_\a^\dia,V)\ne 0,\ \ \det A_\a^{22}(V)\ne 0\ \ \mathit{and}\ \
\det\c(0,\l_\a^\dia\!+\!\m,V)\ne 0,\ \mathit{if}\ |\m|=d^\dia.
\]
Moreover, for all $j=1,2,..,N$ and $|\m|=d^\dia$,
\begin{equation}
\label{xInv} \left[A_{n,j}^{22}(V)\right]^{-1}\!= O(n^2)\quad \mathit{and}\quad
\left[\c(0,\l_{n,j}^\dia\!+\!\m,V)\right]^{-1}\!= O(n^2)
\end{equation}
uniformly on $\cB(V^\dia,r^\dia)$.
\end{lemma}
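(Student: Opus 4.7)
The plan is to split the proof into a finite-index part (small $\a$) handled by analytic perturbation at $V^\dia$, and an asymptotic part ($n \ge N^*$ for some large threshold) handled by the uniform expansions (\ref{ChiW=})--(\ref{ChiDia0=}). The radius $r^\dia$ will be obtained as the minimum of the two radii from these pieces.

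For the non-vanishing at $V^\dia$ itself, I would exploit that $\c(\cdot, \l, V^\dia)$ and $\c'(\cdot, \l, V^\dia)$ are diagonal, with $(k,k)$-entries equal to the scalar solutions for $v_{kk}^\dia$. The construction of $v_{jj}^\dia$ in Step 1 of Sect.~\ref{SectGeneralStrategy} was arranged precisely so that $\c_k'(0, \l_\a^\dia, v_{kk}^\dia) \ne 0$ for every $\a, k$; this gives $\det \c'(0, \l_\a^\dia, V^\dia) \ne 0$ and forces $A_\a(V^\dia)$ to be diagonal with $(k,k)$-entry $1/m(\l_\a^\dia, v_{kk}^\dia)$, which is finite and nonzero for every $k$ indexing a basis vector of $(\cE_\a^\dia)^\perp$. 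Hence $\det A_\a^{22}(V^\dia) \ne 0$. For $|\m| = d^\dia$, the definition of $d^\dia$ keeps $\l_\a^\dia + \m$ off the spectrum of $\bL_{V^\dia} = \bigcup_k \s(v_{kk}^\dia)$, so each $\c_k(0, \l_\a^\dia + \m, v_{kk}^\dia) \ne 0$ and $\det \c(0, \l_\a^\dia + \m, V^\dia) \ne 0$. Since all three matrices depend analytically on $V$ near $V^\dia$ (with $A_\a^{22}$ analytic wherever $\c'$ is invertible), the non-vanishing persists on a neighborhood for each fixed $\a$; intersecting over the finitely many indices with $\a \le \a^\dia$ or $n < N^*$ yields a common radius $r_{\rm fin}$.

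For the asymptotic part, I would substitute $W = V - V^\dia$ into (\ref{Chi'W=}) and (\ref{ChiDia0=}) to obtain, uniformly over $V \in \cB(V^\dia, r_0)$ and $|\m| \le d^\dia$,
\[
\c'(0, \l_{n,j}^\dia, V) = (-1)^{n-1}\bigl(I_N + O(n^{-1}) + O(r_0)\bigr),
\]
\[
\c(0, \l_{n,j}^\dia + \m, V) = \frac{(-1)^n}{2\pi^2 n^2}\bigl(D_{n,j}(\m) + o(1) + O(r_0)\bigr),
\]
where $D_{n,j}(\m) = \diag\{\m - v_j^0 + v_k^0\}_{k=1}^N$. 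The key geometric input is $2d^\dia \le \min_{k \ne l}|v_k^0 - v_l^0|$, which follows from the definition of $d^\dia$ together with the asymptotic gap $\l_{n,j+1}^\dia - \l_{n,j}^\dia \to v_{j+1}^0 - v_j^0$ (the overall minimum of consecutive eigenvalue gaps cannot exceed the minimum gap between the $v_k^0$'s). This forces every diagonal entry of $D_{n,j}(\m)$ on $|\m| = d^\dia$ to have modulus at least $d^\dia$, so $|\det D_{n,j}(\m)| \ge (d^\dia)^N$. For $r_0$ small and $n \ge N^*$ large, this yields $[\c'(0,\l_{n,j}^\dia,V)]^{-1} = O(1)$ and $[\c(0,\l_{n,j}^\dia+\m,V)]^{-1} = O(n^2)$ uniformly. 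Finally, $A_{n,j}^{22}(V)$ is the restriction to the complement of $e_j^0$; a direct block computation (or Schur-complement argument) gives $A_{n,j}^{22}(V) = -\frac{1}{2\pi^2 n^2}[\diag\{v_k^0 - v_j^0\}_{k \ne j} + o(1) + O(r_0)]$, whose diagonal entries are bounded below by the minimum $v$-gap, hence $[A_{n,j}^{22}(V)]^{-1} = O(n^2)$. Setting $r^\dia = \min(r_{\rm fin}, r_0)$ finishes the lemma.

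The main obstacle is the geometric inequality $2d^\dia \le \min_{k \ne l}|v_k^0 - v_l^0|$, which is what makes the leading diagonal matrix $D_{n,j}(\m)$ uniformly invertible on the circle $|\m| = d^\dia$. Note that the degenerate $(j,j)$-entry of $D_{n,j}(0)$---which is what prevents $\c(0, \l_{n,j}^\dia, V^\dia)$ itself from being invertible---is precisely the entry deleted when projecting down to $A_{n,j}^{22}$, which explains why the block $A_{n,j}^{22}$ remains invertible at $V^\dia$. The remainder is routine bookkeeping of the uniform-in-$(n,\m,V)$ error terms coming from (\ref{Chi'W=})--(\ref{ChiDia0=}).
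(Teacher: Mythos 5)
Your proof is correct and follows the same two-part structure as the paper's: large $n$ is handled uniformly via the expansions (\ref{ChiW=})--(\ref{ChiDia0=}), and the finitely many small indices $\a$ are handled by continuity of $\c,\c',A_\a^{22}$ in $V$ together with non-degeneracy at $V^\dia$ itself, which follows from the diagonal structure and the Step-1 choice $m(\l_\a^\dia,v_{kk}^\dia)\ne 0$. Your elaboration of the geometric inequality $2d^\dia\le\min_{k\ne l}|v_k^0-v_l^0|$ and the resulting lower bound for $\diag\{\m-v_j^0+v_k^0\}$ on $|\m|=d^\dia$ makes explicit a detail the paper leaves implicit, but the underlying argument is identical.
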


\begin{proof}
It follows from (\ref{ChiDia0=}) that all matrices $\c'(0,\l_{n,j}^\dia,V)$,
$A_{n,j}^{22}(V)$, $\c(0,\l_{n,j}^\dia\!+\!\m,V)$ are non-degenerate and (\ref{xInv}) holds,
if $n\ge n_*$ is sufficiently large and $r^\dia$ is sufficiently small. So, one needs to
consider only some finite number of first indices $\a=1,2,..,\a_*$.

Note that $\det\c'(0,\l_\a^\dia,V^\dia)\ne 0$, $\det A_\a^{22}(V^\dia)\ne 0$,
$\det\c(0,\l_\a^\dia\!+\!\m,V^\dia)\ne 0$ for all $\a$ and all these matrices (as functions of
$V$) are continuous at $V^\dia$. Therefore, if $\|W\|\le r^\dia$ and $r^\dia>0$ is small
enough, then all $\c'(0,\l_\a^\dia,V)$, $A_\a^{22}(V)$, $\c(0,\l_\a^\dia+\m,V)$,
$\a=1,2,..,\a_*$, are non-degenerate too.
\end{proof}

\begin{proposition}
\label{aBnjRoughAsympt} (i) There exists $r^\dia\!>\!0$ such that all $\wt{A}_\a(V)$,
$\wt{B}_\a(V)$, $\a\ge 1$, are well-defined by (\ref{aDef}), (\ref{wtBdef}) and analytic in
$\cB(V^\dia,r^\dia)$.

\smallskip

\noindent (ii) For all $j=1,2,..,N$ the asymptotics
\[
\wt{A}_{n,j}(V)= O\lt(\frac{\ve_n(W)}{n^2}\rt),\quad \wt{B}_{n,j}(V)-B_{n,j}^\dia =
O\left(n^2\ve_n(W)\right),\quad\ve_n(W)=|\nh W{cn}|+ \frac{\|W\|}{n},
\]
hold true uniformly for potentials
\[
V\in\cB^0(V^\dia,r^\dia)=\lt\{V=V^\dia\!+\!W\in \cB(V^\dia,r^\dia): \int_0^1W(t)dt=0\rt\}.
\]
\end{proposition}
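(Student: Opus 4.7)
The plan is to combine the perturbation formulas (\ref{ChiW=})--(\ref{Chi'W=}) with Lemma \ref{xABwell-def}. For part (i), the mappings $V\mapsto\c(0,\l,V)$ and $V\mapsto\c'(0,\l,V)$ are analytic in $V\in\cB(V^\dia,r^\dia)$ for sufficiently small $r^\dia$: this follows from the standard Volterra iteration of the integral equation with kernel $J^\dia(x,t)$, whose convergence is uniform on bounded subsets of $W$. Hence each block $A_\a^{ij}(V)$ is analytic, and Lemma \ref{xABwell-def} gives the invertibility of $A_\a^{22}(V)$ throughout $\cB(V^\dia,r^\dia)$ so that the Schur complement $\wt A_\a(V)$ is analytic. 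The contour integral (\ref{wtBdef}) defining $\wt B_\a$ is analytic by the same lemma: $\c(0,\cdot,V)$ has no zero on the contour $|\l-\l_\a^\dia|=d^\dia$ uniformly in $V\in\cB(V^\dia,r^\dia)$, so $M(\cdot,V)$ is analytic along the contour and the integral depends analytically on $V$.

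For the first asymptotic in part (ii), I would substitute $\l=\l_{n,j}^\dia$ into (\ref{ChiW=})--(\ref{Chi'W=}) and evaluate the perturbation integrals entry-by-entry. Using the standard asymptotics $\vp_k^\dia(t,\l_{n,j}^\dia)=\sin(\pi nt)/(\pi n)+O(1/n^2)$, $\c_l^\dia(t,\l_{n,j}^\dia)=(-1)^{n-1}\sin(\pi n(1-t))/(\pi n)+O(1/n^2)$, $\vt_k^\dia(t,\l_{n,j}^\dia)=\cos(\pi nt)+O(1/n)$, together with the product-to-sum identities
\[
2\sin(\pi nt)\sin(\pi n(1-t))=(-1)^n[\cos(2\pi nt)-1],\qquad 2\cos(\pi nt)\sin(\pi n(1-t))=(-1)^{n+1}\sin(2\pi nt),
\]
the two integrals reduce to linear combinations of $\nh W{cn}$, $\nh W{sn}$ and $\nh W 0$. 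Since $\nh W 0=0$ on $\cB^0(V^\dia,r^\dia)$, this yields
\[
\c(0,\l_{n,j}^\dia,V)-\c^\dia(0,\l_{n,j}^\dia)=O(\ve_n(W)/n^2),\qquad \c'(0,\l_{n,j}^\dia,V)-(\c^\dia)'(0,\l_{n,j}^\dia)=O(\ve_n(W)/n).
\]
Combined with $[(\c^\dia)'(0,\l_{n,j}^\dia)]^{-1}=O(1)$ and the vanishing of $A_{n,j}^{11}$, $A_{n,j}^{12}$, $A_{n,j}^{21}$ at $V^\dia$, each of these three blocks of $A_{n,j}(V)$ is $O(\ve_n(W)/n^2)$; inserting $(A_{n,j}^{22})^{-1}=O(n^2)$ from Lemma \ref{xABwell-def} into the Schur complement gives $\wt A_{n,j}(V)=O(\ve_n(W)/n^2)+O(\ve_n(W)^2/n^2)=O(\ve_n(W)/n^2)$.

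The estimate for $\wt B_{n,j}$ comes from repeating the expansion on the contour $|\l-\l_{n,j}^\dia|=d^\dia$. I would write $M=\c'\c^{-1}$ and expand $\c^{-1}-(\c^\dia)^{-1}=-\c^{-1}(\c-\c^\dia)(\c^\dia)^{-1}$. The key point is that $[\c^\dia(0,\l)]^{-1}=O(n^2)$ uniformly on the contour: since $\c^\dia(0,\l)$ is diagonal with $k$-th entry of order $|\l-\l_{n,k}^\dia|/n^2$ to leading order and the contour stays at distance $d^\dia$ from every unperturbed eigenvalue, each diagonal entry is bounded below by $c(d^\dia)/n^2$ with $c(d^\dia)>0$. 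Together with the perturbation bounds above this gives $M(\l,V)-M(\l,V^\dia)=O(n^2\ve_n(W))$ uniformly on the contour, and integration over a contour of fixed length produces $\wt B_{n,j}(V)-B_{n,j}^\dia=O(n^2\ve_n(W))$. The main technical obstacle throughout is to keep track of the $n$-dependence of matrix norms and their inverses: although $\c^\dia(0,\l)$ itself is only $O(1/n^2)$, its diagonal structure and the separation guaranteed by $d^\dia$ allow one to control $[\c^\dia]^{-1}$ up to factors of $n^2$, and these factors exactly balance the $O(\ve_n(W)/n^2)$ perturbation of $\c$ to produce the stated bounds.
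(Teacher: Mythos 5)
Your argument follows the same route as the paper: part (i) is the analyticity of $\c(0,\l,\cdot)$, $\c'(0,\l,\cdot)$ together with Lemma~\ref{xABwell-def}; part (ii) rests on the perturbation formulas (\ref{ChiW=})--(\ref{Chi'W=}), the vanishing of $A_{n,j}^{11},A_{n,j}^{12},A_{n,j}^{21}$ at $V^\dia$, and the block estimate $(A_{n,j}^{22})^{-1}=O(n^2)$. One intermediate claim is, however, false: you state that $\c'(0,\l_{n,j}^\dia,V)-(\c^\dia)'(0,\l_{n,j}^\dia)=O(\ve_n(W)/n)$, but the leading term of the perturbation integral $\int_0^1\vt^\dia(t)W(t)\c^\dia(t)\,dt$ is $\tfrac{1}{2\pi n}\nh W{sn}$, and $|\nh W{sn}|$ is \emph{not} controlled by $\ve_n(W)=|\nh W{cn}|+\|W\|/n$ (take $W(t)=\sin 2\pi nt$, for which $|\nh W{sn}|\asymp 1$ while $\ve_n(W)\asymp 1/n$). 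The correct bound, and the one the paper proves, is only $O(\|W\|/n)$. This slip does not propagate to the stated conclusion, because when the error in $\c'$ is combined with the factor $\c^\dia(0,\l_{n,j}^\dia)=O(n^{-2})$ (or with the $O(n^{2})$ factor from $\c^{-1}$ in the $\wt B$ estimate), the $O(\|W\|/n)$ term is absorbed using $\|W\|/n\le\ve_n(W)$ --- exactly the observation the paper makes explicitly. You should replace the erroneous $O(\ve_n(W)/n)$ bound by $O(\|W\|/n)$ and then note that the $n^{-2}$ gain from $\c^\dia(0,\cdot)$ (respectively the $n^2$ factor from $\c^{-1}$, $(\c^\dia)^{-1}$ on the contour) is what turns $\|W\|/n$ into a quantity controlled by $\ve_n(W)$ in the final estimate.
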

\begin{proof}
(i) Due to Lemma \ref{xABwell-def}, all $\wt{A}_\a(V)$, $\wt{B}_\a(V)$ are well-defined in
some complex neighborhood $\cB(V^\dia,r^\dia)$ of $V^\dia$. These functions are analytic in
this neighborhood since $\c(0,\l,V)$ and $\c'(0,\l,V)$ are analytic for each $\l$ as functions
of $V$.

\smallskip

\noindent (ii) Let $\l=\pi^2n^2\!+\!\m$ and $|\m|=O(1)$, thus
\[
\vp^\dia(t,\l)= (\pi n)^{-1}\sin\pi nt + O(n^{-2})\ \ \mathrm{and}\ \
(-1)^{n-1}\c^\dia(t,\l)=(\pi n)^{-1}{\sin\pi nt}+O(n^{-2}).
\]
Using (\ref{ChiW=}), (\ref{Chi'W=}) and $\int_0^1W(t)dt=0$, we~get
\[
\c(0,\l,V)=\c^\dia(0,\l)+ O\lt(\frac{\ve_n(W)}{n^2}\rt),\quad%\]\[%\ \ \mathrm{and} \ \
\c'(0,\l,V)=(\c^\dia)'(0,\l)+ O\lt(\frac{\|W\|}{n}\rt)
\]
(note that $n^{-1}\|W\|\le \ve_n(W)$ by definition). Due to (\ref{ChiDia0=}), it gives
\[
A_{n,j}(V)= [\c(\c')^{-1}](0,\l_{n,j}^\dia,V)=A_{n,j}(V^\dia) + O\lt(\frac{\ve_n(W)}{n^2}\rt).
\]
Since $A_{n,j}^{11}(V^\dia)\!=\!0$, $A_{n,j}^{12}(V^\dia)\!=\!0$, $A_{n,j}^{21}(V^\dia)\!=\!0$
and $(A_{n,j}^{22}(V))^{-1}\!=\!O(n^2)$, we have
\[
\wt{A}_{n,j}(V)= \left[A_{n,j}^{11} - A_{n,j}^{12}(A_{n,j}^{22})^{-1}A_{n,j}^{21}\right](V)=
O\lt(\frac{\ve_n(W)}{n^2}\rt).
\]
Due to the similar arguments, if $\l=\l_{n,j}^\dia\!+\!\m$, $|\m|=d^\dia$, then
%\[
%\c^{-1}(0,\l,V)= (\c^\dia)^{-1}(0,\l)+ O\left(n^2\ve_n(W)\right),
%\]
%and
\[
[\c'\c^{-1}](0,\l,V) = [(\c^\dia)'(\c^\dia)^{-1}](0,\l)+ O\left(n^2\ve_n(W)\right).
\]
Integrating over the contour $|\m|=d^\dia$, we obtain $\wt{B}_{n,j}(V)=B_{n,j}^\dia +
O\left(n^2\ve_n(W)\right)$.
\end{proof}

\begin{lemma}
\label{ABselfadjoint} For some $r^\dia>0$ and all $V=V^*\in\cB(V^\dia,r^\dia)$ the following
hold:

\smallskip

(i) $\wt{A}_\a(V)=[\wt{A}_\a(V)]^*$, $\wt{B}_\a(V)=[\wt{B}_\a(V)]^*$ and $\rank
\wt{B}_\a(V)=k_\a^\dia$;

\smallskip

(ii) $\wt{A}_\a(V)=0$ if and only if $\bf\l_\a^\dia$ is an eigenvalue of $V$ of multiplicity
$k_\a^\dia$;

\smallskip

(iii) if $\wt{A}_\a(V)=0$, then $\wt{B}_\a(V)=B_\a(V)$.
\end{lemma}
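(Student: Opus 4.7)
My plan is to prove (i), (ii), (iii) in sequence using three ingredients: the symmetry $M(\ol\l)^{*}=M(\l)$ valid for self-adjoint $V$; the Schur complement identity linking $\Ker\wt A_\a$ and $\Ker A_\a$; and a matrix Rouch\'e count (Lemma~\ref{MatrixRouche}) of Dirichlet eigenvalues of $\bL_V$ near $\l_\a^\dia$. For the self-adjointness in (i), Lemma~\ref{xABwell-def} guarantees that $A_\a(V)=[\c(\c')^{-1}](0,\l_\a^\dia,V)$ is well defined on $\cB(V^\dia,r^\dia)$, and applying $M(\ol\l)^{*}=M(\l)$ at the real point $\l_\a^\dia$ (and inverting) yields $A_\a(V)^{*}=A_\a(V)$ for $V=V^{*}$. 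The block decomposition (\ref{A11def}) then gives $(A_\a^{jj})^{*}=A_\a^{jj}$ and $A_\a^{12}=(A_\a^{21})^{*}$, so $\wt A_\a=A_\a^{11}-A_\a^{12}(A_\a^{22})^{-1}(A_\a^{12})^{*}$ is self-adjoint. For $\wt B_\a$ I parametrize the contour $|\l-\l_\a^\dia|=d^\dia$ and use its invariance under $\l\mapsto\ol\l$ together with $M(\ol\l)^{*}=M(\l)$ to conclude $\wt B_\a(V)^{*}=\wt B_\a(V)$.

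For the rank assertion in (i), recall that $V^\dia$ is diagonal and each scalar factor $\c_{jj}^\dia$ has only simple zeros by construction, so $\det\c(0,\cdot,V^\dia)$ has a zero of order exactly $k_\a^\dia$ at $\l_\a^\dia$. Applying Lemma~\ref{MatrixRouche} to $\c(0,\cdot,V)$ on the disc $|\l-\l_\a^\dia|<d^\dia$ shows, after shrinking $r^\dia$ if necessary, that the total Dirichlet multiplicity of eigenvalues of $\bL_V$ inside the disc equals $k_\a^\dia$. Hence $\wt B_\a(V)=\sum_{\b}B_\b(V)$ is a sum of positive semi-definite matrices $B_\b=p_\b^{*}g_\b^{-1}p_\b$ of ranks $k_\b$ with $\sum_\b k_\b=k_\a^\dia$; the classical identity $\Ran(X+Y)=\Ran X+\Ran Y$ for PSD $X,Y$ then yields $\rank\wt B_\a(V)\le k_\a^\dia$, while lower semi-continuity of the rank combined with $\rank\wt B_\a(V^\dia)=k_\a^\dia$ supplies the matching lower bound.

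For (ii) I use the block LDU factorization of $A_\a$ with respect to the splitting $\C^N=\cE_\a^\dia\oplus(\cE_\a^\dia)^{\perp}$: since $A_\a^{22}(V)$ is invertible (Lemma~\ref{xABwell-def}), this factorization immediately gives $\dim\Ker A_\a(V)=\dim\Ker\wt A_\a(V)$. Since $\c'(0,\l_\a^\dia,V)$ is invertible, the same quantity equals $\dim\Ker\c(0,\l_\a^\dia,V)$, which for $V=V^{*}$ coincides with the multiplicity of $\l_\a^\dia$ as a Dirichlet eigenvalue of $\bL_V$. As $\wt A_\a$ is a $k_\a^\dia\!\ts\!k_\a^\dia$ matrix, $\dim\Ker\wt A_\a\le k_\a^\dia$ with equality iff $\wt A_\a=0$, proving (ii). Claim (iii) is then immediate: $\wt A_\a(V)=0$ forces $\l_\a^\dia$ to be a Dirichlet eigenvalue of $\bL_V$ of multiplicity $k_\a^\dia$, hence by the Rouch\'e count the only eigenvalue inside $|\l-\l_\a^\dia|<d^\dia$, so the integral defining $\wt B_\a(V)$ collects only this single residue and equals $B_\a(V)$.

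The main obstacle lies in the rank equality of (i): continuity of $\wt B_\a$ at $V^\dia$ yields only the lower bound $\rank\wt B_\a(V)\ge k_\a^\dia$, and under splitting of the multiple eigenvalue $\l_\a^\dia$ the naive upper bound $\sum_\b\rank B_\b$ is not automatically attained, because the ranges of the individual $B_\b$ could a priori overlap. Exploiting the positive semi-definiteness of the residues $B_\b$ via the sum-of-ranges identity is what pins the rank to $k_\a^\dia$ and closes the argument.
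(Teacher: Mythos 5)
Your proof is correct and follows essentially the same route as the paper: self-adjointness of $\wt A_\a$ and $\wt B_\a$ from $M(\ol\l)^*=M(\l)$; the rank equality $\rank\wt B_\a(V)=k_\a^\dia$ from an upper bound (subadditivity of rank applied to the sum of residues, using the eigenvalue count inside the contour) together with a lower bound (lower semicontinuity of rank at $V^\dia$, where $\rank\wt B_\a(V^\dia)=k_\a^\dia$); part (ii) from the Schur-complement rank identity $\rank A_\a=\rank\wt A_\a+\rank A_\a^{22}$ with $A_\a^{22}$ and $\c'(0,\l_\a^\dia,V)$ invertible by Lemma~\ref{xABwell-def}; and (iii) as an immediate consequence. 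Your eigenvalue count via Lemma~\ref{MatrixRouche} is an explicit version of the paper's appeal to ``no zeros of $\det\c(0,\cdot,V)$ on the circle $|\l-\l_\a^\dia|=d^\dia$ plus continuity of the spectrum'' — both amount to the argument principle on a connected ball of potentials.

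One small remark on your closing reflection: the positive-semidefiniteness of the residues $B_\b$ is not actually what closes the rank argument. The range identity $\Ran(X+Y)=\Ran X+\Ran Y$ for PSD matrices yields only $\rank\wt B_\a(V)\le\sum_\b\rank B_\b=k_\a^\dia$, which plain rank subadditivity (valid for arbitrary matrices) already gives. What pins the rank to exactly $k_\a^\dia$ is the semicontinuity lower bound coming from $\rank\wt B_\a(V^\dia)=k_\a^\dia$ — i.e., a nonvanishing $k_\a^\dia\ts k_\a^\dia$ minor persisting under small perturbation — exactly as in the paper; PSD plays no essential role there.
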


\begin{proof}
(i) If $V=V^*$, then $M(\l)\equiv [M(\ol{\l})]^*$, $\l\in\C$. In particular,
$\wt{B}_\a(V)=[\wt{B}_\a(V)]^*$, $A_\a(V)=[A_\a(V)]^*$ and $\wt{A}_\a(V)=[\wt{A}_\a(V)]^*$.
Due to Lemma \ref{xABwell-def}, $\det\c(0,\l,V)$ has no zeros on the circle
$|\l-\l_\a|=d^\dia$ for all $V\in\cB(V^\dia,r^\dia)$. Since the spectrum depends on the
potentials continuously, for each self-adjoint potential $V=V^*\in\cB(V^\dia,r^\dia)$ there
are exactly $k_\a^\dia$ eigenvalues in the interval
$(\l_\a^\dia\!-\!d^\dia,\l_\a^\dia\!+\!d^\dia)$ counting with multiplicities.

If $\a>\a^\dia$, then $k_\a^\dia=1$ and $\rank \wt{B}_\a(V)=\rank B_\a(V)=1$. If
$\a\le\a^\dia$, then $\rank \wt{B}_\a(V)\le k_\a^\dia$. Note that $\rank \wt{B}_\a(V^\dia)=
k_\a^\dia$ and $\wt{B}_\a$ is a continuous function of $V$. Thus, if $r^\dia$ is small enough,
then $\rank \wt{B}_\a(V)\ge k_\a^\dia$ for all $\a\le\a^\dia$ and $V\in\cB(V^\dia,r^\dia)$.

\smallskip

\noindent (ii) Recall that $\l_\a^\dia$ is an eigenvalue of $V$ of multiplicity $k_\a^\dia$
iff $\dim \Ker \c(0,\l_\a^\dia,V)=k_\a^\dia$. Since $\det\c'(0,\l_\a^\dia,V)\ne 0$ (see Lemma
\ref{xABwell-def}), this is equivalent to say that
\[
\dim \Ker [\c(\c')^{-1}](0,\l_\a^\dia,V)=k_\a^\dia,\quad \mathrm{i.e.,}\quad \rank
A_\a(V)=N-k_\a.
\]
Due to Lemma \ref{xABwell-def}, $\det A_\a^{22}(V)\ne 0$ for all $V\in\cB(V^\dia,r^\dia)$.
Then, the last statement is equivalent to $\wt{A}_\a(V)= [A_\a^{11}-
A_\a^{12}(A_\a^{22})^{-1}A_\a^{21}](V)=0$.

\smallskip

\noindent (iii) If $\wt{A}_\a(V)=0$, then $\l_\a^\dia$ is an eigenvalue of multiplicity
$k_\a^\dia$ and there are no other eigenvalues in the disc $|\l\!-\!\l_\a^\dia|<d^\dia$. Thus,
\[
\wt{B}_\a(V)= - \res{}_{\l=\l_\a^\dia} M(\l,V) = B_\a(V). \qedhere
\]
\end{proof}

\subsection{Analyticity. Expanded mapping $\bf \P$.}

\label{SectAnalyticity1}

Proposition \ref{aBnjRoughAsympt} (i) guarantees that all matrices \mbox{$\wt{A}_\a(V)$,
$\wt{B}_\a(V)$, $\a\ge 1$}, are well-defined in some neighborhood $\cB(V^\dia,r^\dia)$ of
$V^\dia$. Let $\a^\dia\ge 0$ and $n^\dia\ge 1$ be such that
\[
k_1^\dia+k_2^\dia+..+k_{\a^\dia}^\dia=N(n^\dia\!-\!1)\quad \mathrm {and} \quad k_\a^\dia=1\ \
\mathrm{for\ all}\ \a\ge\a^\dia\!+\!1,
\]
so the double-indexing $(n,j)$, $j=1,2,..,N$, is well-defined starting with $n^\dia$. Also,
let $n^\dia$ be sufficiently large such that $g_{n,j}^{-1}(V^\dia)=2\pi^2n^2$ for all $n\ge
n^\dia$ (see Step 1 Sect. \ref{SectGeneralStrategy}). Recall that
$B_n(V)=\sum_{j=1}^N\wt{B}_{n,j}(V)$ for $n\ge n^\dia$.
\begin{definition} \label{DefPsi}
Introduce the (formal) mapping
\[%\begin{equation}\label{PsiDef}
\begin{array}{lcl}
\P: V & \mapsto & \displaystyle \left(\vphantom{\big|}\P^{(1)}(V)\ ; \P^{(2)}(V) \right) =
\lt(\left(\P^{(1)}_\a(V)\right)_{\a=1}^{\a^\dia}\ ;
\left(\P^{(2)}_n(V)\right)_{n=n^\dia}^{+\infty} \rt), \cr
%\]%\end{equation}
%where $\P^{(1)}_\a=\left(\wt{A}_\a\ ;\wt{B}_\a\right)$ and
%\[\begin{array}{llcl}
  \P^{(1)}_\a & = & \displaystyle \left(\wt{A}_\a\ ;\wt{B}_\a\right), \cr
  \P^{(2)}_n & = & \displaystyle\lt(\left(2\pi^2n^2\cdot
\wt{A}_{n,j}\right)_{j=1}^N\ ;\ \lt(\frac{\wt{B}_{n,j}}{2\pi^2n^2}-P_j^0\rt)_{\!j=1}^{\!N}\ ;\
\pi n\lt[\frac{B_n}{2\pi^2n^2}-I_N\rt]\,\rt).
\end{array}
\]
\end{definition}

\noindent Note that $\P^{(1)}_\a$ and $\P^{(2)}_n$ map $\cB(V^\dia,r^\dia)$ into some
finite-dimensional spaces. Namely,
\[
\P^{(1)}_\a: \cB(V^\dia,r^\dia)\to \C^{k^\dia_\a\ts k^\dia_\a}\oplus \C^{N\ts N} \ \
\mathrm{and}\ \ \P^{(2)}_n: \cB(V^\dia,r^\dia)\to \C^N\oplus\!\left[\C^{N\ts N}\right]^N\!\!
\oplus \C^{N\ts N}.
\]
Since $\P^{(1)}$ has the finite number of components, it also acts into {\it
finite-dimensional} Hilbert (Euclidian) space $\wt{\cH}^{(1)} =
\Oplus_{\a=1}^{\a^\dia}\left[\C^{k^\dia_\a\!\ts k^\dia_\a}\!\oplus \C^{N\ts N}\right]$. It has
been shown in Sect.~\ref{SectRoughAsymptAB} that the components of $\P^{(2)}$ have "nice"
asymptotics for potentials
\[
V\in \cB^0(V^\dia,r^\dia)=\lt\{V=V^\dia\!+\!W\in \cB(V^\dia,r^\dia):  \int_0^1W(t)dt=0 \rt\}.
\]
Let $\N_{n^\dia}=\{n\!\in\!\N:n\!\ge\! n^\dia\}$ and $\C^{m\ts m}_\R=\{A\!=\!A^*\!\in\!
\C^{m\ts m}\}$ be the real component of the complex Hilbert space $\C^{m\ts m}$, i.e., the
real space of all self-adjoint $m\!\ts\!m$ matrices.
\begin{lemma}
\label{PsiAnalyticity} (i) $\P^{(2)}$ maps $\cB^0(V^\dia,r^\dia)$ into
$\wt{\cH}^{(2)}=\,\ell^2_\C\left(\,\N_{n^\dia}\,;\, \C^N\!\oplus\!\left[\C^{N\ts
N}\right]^N\!\!\oplus \C^{N\ts N} \right)$. Moreover, the image
$\P^{(2)}\left[\cB^0(V^\dia,r^\dia)\right]$ is bounded in $\wt{\cH}^{(2)}$.

\smallskip

\noindent (ii) $\P:\cB^0(V^\dia,r^\dia)\to \wt{\cH}=\wt{\cH}^{(1)}\oplus \wt{\cH}^{(2)}$ is an
analytic mapping between complex Hilbert spaces. Moreover, the Fr\'echet derivative
$d_{V^\dia}\P$ of $\P$ at $V^\dia$ is given by the Fr\'echet derivatives of its components:
$(d_{V^\dia}\P)W=\left(((d_{V^\dia}\P^{(1)}_\a)W)_{\a=1}^{\a^\dia}\ ;
((d_{V^\dia}\P^{(2)}_n)W)_{n=n^\dia}^{+\infty} \right)$.

\smallskip

\noindent (iii) $\P:\cB^0_\R(V^\dia,r^\dia)=\cB^0(V^\dia,r^\dia)\cap\cL^2([0,1];\C^{N\ts
N}_\R)\to \wt{\cH}_\R=\wt{\cH}^{(1)}_\R\ts \wt{\cH}^{(2)}_\R$ is a real-analytic mapping
between real Hilbert spaces and the Fr\'echet derivative $d_{V^\dia}\P$ is given by the
Fr\'echet derivatives of its components, where
\[
\wt{\cH}^{(1)}_\R = \Oplus_{\a=1}^{\a^\dia}\left[\C^{k^\dia_\a\!\ts k^\dia_\a}_\R\!\oplus
\C^{N\ts N}_\R\right], \qquad \wt{\cH}^{(2)}_\R\ =
\,\ell^2_\R\left(\,\N_{n^\dia}\,;\,\R^N\oplus\left[\C^{N\ts N}_\R\right]^N\!\!\oplus \C^{N\ts
N}_\R \right)\!.
\]
\end{lemma}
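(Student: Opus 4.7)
The plan is to combine the componentwise estimates already at hand (Propositions \ref{AsymptBn} and \ref{aBnjRoughAsympt}) with the standard principle that weak analyticity plus local boundedness implies analyticity for maps between complex Hilbert spaces; part (iii) will then follow from the definition of real-analyticity by restricting the complex-analytic extension.

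For (i), I would handle the three blocks of $\P^{(2)}_n$ separately. The first block $2\pi^2n^2\wt A_{n,j}(V)$ and the second block $\wt B_{n,j}(V)/(2\pi^2n^2)-P_j^0$ are both $O(\ve_n(W))$ by Proposition \ref{aBnjRoughAsympt}(ii), using that $B_{n,j}^\dia=2\pi^2n^2P_j^0$ for $n\ge n^\dia$ (by the construction of $V^\dia$ in Step~1). Since $(\nh W{cn})_n$ lies in $\ell^2$ by Parseval and $(\|W\|/n)_n\in\ell^2$, we obtain $(\ve_n(W))\in\ell^2$ with $\ell^2$-norm controlled by $\|W\|$ uniformly on $\cB^0(V^\dia,r^\dia)$. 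For the third block $\pi n[B_n/(2\pi^2n^2)-I_N]$, multiplying $O(\ve_n(W))$ by $\pi n$ is insufficient, so here I would invoke the sharper Proposition \ref{AsymptBn}, which gives
\[
\pi n\lt[\tfrac{B_n(V)}{2\pi^2n^2}-I_N\rt] = -\nh{[(1\!-\!t)V]}{sn}+O(n^{-1}),
\]
whose leading term is $\ell^2$ in $n$ by Parseval and whose remainder is $\ell^2$ uniformly on bounded subsets. Combining the three estimates yields $\P^{(2)}[\cB^0(V^\dia,r^\dia)]\subset\wt\cH^{(2)}$, with the image bounded.

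For (ii), I would use that all matrix entries of $\wt A_\a(V)$, $\wt B_\a(V)$, $\wt A_{n,j}(V)$, $\wt B_{n,j}(V)$ and $B_n(V)$ are complex-analytic functions of $V\in\cB(V^\dia,r^\dia)$ by Proposition \ref{aBnjRoughAsympt}(i). In particular, every continuous linear functional on $\wt\cH$ supported on finitely many coordinates composes with $\P$ to an analytic scalar function; such functionals are dense in the dual of $\wt\cH$. Together with the local boundedness established in (i), the classical equivalence between weak analyticity and analyticity for maps between complex Hilbert spaces (the references cited in the footnote, namely Appendix~A of~\cite{PT} or~\cite{Di}) then yields that $\P$ is analytic on $\cB^0(V^\dia,r^\dia)$. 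The componentwise description of $d_{V^\dia}\P$ is immediate: the orthogonal decomposition $\wt\cH=\wt\cH^{(1)}\oplus\wt\cH^{(2)}$ commutes with Fr\'echet differentiation, and each coordinate projection is continuous.

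For (iii), Lemma \ref{ABselfadjoint}(i) guarantees that $\wt A_\a(V)$ and $\wt B_\a(V)$ lie in the self-adjoint subspaces $\C^{k_\a^\dia\ts k_\a^\dia}_\R$ and $\C^{N\ts N}_\R$ whenever $V=V^*$, so $\P$ sends $\cB^0_\R(V^\dia,r^\dia)$ into $\wt\cH_\R$. Since $\cB^0_\R(V^\dia,r^\dia)$ is exactly the real form of the complex neighborhood on which the analytic extension from (ii) is defined, real-analyticity is automatic from the definition recalled in the footnote, and the Fr\'echet derivative restricts componentwise by the same argument as in (ii). The only genuine obstacle is bookkeeping of powers of $n$ across the three blocks of $\P^{(2)}_n$: the generic estimate $O(\ve_n(W)/n^2)$ from Proposition \ref{aBnjRoughAsympt} is too weak for the $\pi n$-prefactor of the averaged-projector block, and one must use Proposition \ref{AsymptBn} — crucially stated uniformly over the complex space of potentials, which is precisely what is needed to feed the analyticity argument in (ii).
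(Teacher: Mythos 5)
Your proposal is correct and follows essentially the same route as the paper: componentwise $\ell^2$ bounds from Propositions \ref{aBnjRoughAsympt} and \ref{AsymptBn} (with the sharper Proposition \ref{AsymptBn} needed precisely because the $\pi n$-weighted third block would not be covered by the generic $O(n^2\ve_n(W))$ estimate), then weak analyticity plus local boundedness to upgrade to analyticity, and finally Lemma \ref{ABselfadjoint} to place the image in $\wt\cH_\R$ for part (iii). You supply somewhat more detail than the paper (e.g.\ the Parseval argument and the density-of-finitely-supported-functionals step), but the structure, the key lemmas invoked, and the point where the sharper asymptotics are indispensable all coincide with the authors' argument.
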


\begin{proof}
(i) Due to Proposition \ref{aBnjRoughAsympt}, for all $j=1,2,..,N$
\[
\wt{A}_{n,j}(V)= O(n^{-2}\ve_n(W))\quad \mathrm{and}\quad \wt{B}_{n,j}(V)- B_{n,j}^\dia =
O(n^2\ve_n(W))
\]
uniformly on $\cB(V^\dia,r^\dia)$, where
\[
\ve_n(W)=|\nh W{cn}|+ \frac{\|W\|}{n},\qquad \mathrm{so}\qquad \sum_{n=n^\dia}^{+\infty}
|\ve_n(W)|^2=O(\|W\|^2).
\]
Since $B_{n,j}^\dia=(g_{n,j}^\dia)^{-1}P_j^0=2\pi^2n^2P_j^0$, $n\ge n^\dia$, we obtain
\[
\left(2\pi^2n^2\wt{A}_{n,j}(V)\right)_{n=n^\dia}^{+\iy}\in \ell^2 \qquad \mathrm{and} \qquad
\lt(\frac{\wt{B}_{n,j}(V)}{2\pi^2n^2}-P_j^0\rt)_{\!n=n^\dia}^{+\iy}\in \ell^2,\ \ j=1,2,..,N,
\]
uniformly on $\cB^0(V^\dia,r^\dia)$. Also, due to Proposition \ref{AsymptBn},
\[
\lt(\pi n\lt[\frac{B_n(V)}{2\pi^2n^2}-I_N\rt]\rt)_{\!n=n^\dia}^{\!+\iy}\in \ell^2\quad
\mathrm{uniformly\ on}\ \cB^0(V^\dia,r^\dia).
\]

\noindent (ii) Due to Proposition \ref{aBnjRoughAsympt}, all coordinates $\P_\a^{(1)}$,
$\a=1,2,..,\a^\dia$, are analytic in $\cB(V^\dia,r^\dia)$. Hence, $\P^{(1)}$ is analytic too.
Similarly, all coordinates $\P_n^{(2)}$, $n\ge n^\dia$, are analytic in $\cB(V^\dia,r^\dia)$.
It follows from (i), that $\P^{(2)}$ is also locally bounded in $\cB^0(V^\dia,r^\dia)$.
Therefore (e.g., see \cite{PT} (Appendix A, Theorem 3) or \cite{Di} (Chapter 3, Proposition
3.7)), $\P^{(2)}$~is analytic as the mapping between Hilbert spaces and its Fr\'echet
derivative (or, equivalently, gradient) is given by the Fr\'echet derivatives (gradients) of
its components.

\smallskip

\noindent (iii) By Lemma \ref{ABselfadjoint}, $\P$ maps $B^0_\R(V^\dia, r^\dia)$ into
$\wt{\cH}_\R$. $\P$ is real-analytic due to (ii).
\end{proof}

\subsection{Analyticity. Modified mapping $\bf \F$.}

\label{SectAnalyticity2}

The expanded mapping $\P$ introduced in Definition \ref{DefPsi} is real-analytic but
overdetermined. In other words, its coordinates, obviously, are not independent from each
other. In particular, there are no chances that the Fr\'echet derivative $d_{V^\dia}\P$ is
invertible. On the other hand, the coordinates $\wt{A}_\a(V)$, $\wt{B}_\a(V)$,
\mbox{$\a\!\ge\!1$}, of the original mapping $\wt{\F}$ are independent, but we have no "nice"
description of the image space. The next goal is to construct some modified mapping
\mbox{$\F=(\F^{(1)},\F^{(2)})$} (see Definitions \ref{DefPhi1}, \ref{aceYUSdef},
\ref{DefPhi2}) such that
\begin{quotation}
\noindent (i) it keeps the full information about $\wt{A}_\a(V)$, $\wt{B}_\a(V)$, $\a\!\ge\!1$;\\
(ii) it is real-analytic as the mapping between Hilbert spaces;\\
(iii) its coordinates are "independent" from each other (more precisely, in Sect.
\ref{SectFrechetDer1}, \ref{SectFrechetDer2} we will show that $d_{V^\dia}\F$ is an invertible
linear operator).
\end{quotation}

We start with a slight modification of the first coordinates $\wt{B}_\a(V)$,
$\a=1,2,..,\a^\dia$. Recall that, if $V\in \cB^0_\R(V^\dia,r^\dia)$, then
$\wt{B}_\a(V)=[\wt{B}_\a(V)]^*$, $\rank \wt{B}_\a(V)=k^\dia_\a$ and
\[
B_\a^\dia=(p_\a^\dia)^* B_\a^\dia p_\a^\dia, \qquad p_\a^\dia B_\a^\dia(p_\a^\dia)^* =
(g_\a^\dia)^{-1} = [(g_\a^\dia)^{-1}]^*>0
\]
(moreover, $g_\a^\dia$ is diagonal, since $V^\dia$ is diagonal). Therefore, if $r^\dia\!>\!0$
is sufficiently small, then for each $\a=1,2,..,\a^\dia$ we have the (unique) factorization
\begin{equation}
\label{FactorCE} \wt{B}_\a = \left[(p_\a^\dia)^*+(q_\a^\dia)^*E_\a\right] C_\a
\left[p_\a^\dia+E_\a^*q_\a^\dia\right],\qquad
\begin{array}{l}
C_\a\,=\,C_\a^*\,=\wt{B}_\a^{11}: \cE_\a^\dia\to\cE_\a^\dia, \vphantom{|_\big|}\cr
E_\a=\wt{B}_\a^{21}[\wt{B}_\a^{11}]^{-1}: \cE_\a^\dia\to (\cE_\a^\dia)^\perp,
\end{array}
\end{equation}
%Namely, $C_\a=\wt{B}_\a^{11}$ and $E_\a=\wt{B}_\a^{21}[\wt{B}_\a^{11}]^{-1}$,
where $\wt{B}_\a^{11}=p_\a^\dia\wt{B}_\a(p_\a^\dia)^*$,
$\wt{B}_\a^{21}=q_\a^\dia\wt{B}_\a(p_\a^\dia)^*$ etc. Note that $C_\a>0$, since $\rank
\wt{B}_\a = k_\a^\dia$.
%, since $\wt{B}_\a\ge 0$ and $\rank \wt{B}_\a = k_\a^\dia$.

\begin{definition}
\label{DefPhi1} We introduce the first component of the mapping $\F$ by
\begin{equation}
\begin{array}{lcl}
\displaystyle\vphantom{\big|_\big|} \F^{(1)}:\cB^0_\R(V^\dia,r^\dia)\ \to\ \cH^{(1)}_\R & = &
\Oplus_{\a=1}^{\a^\dia}\left[\C^{k^\dia_\a\!\ts k^\dia_\a}_\R\!\oplus \C^{k^\dia_\a\!\ts
k^\dia_\a}_\R\!\oplus \C^{(N-k^\dia_\a)\ts k^\dia_\a}_{\phantom\R}\right]\!, \cr
\displaystyle\vphantom{\big|^\big|}
\F^{(1)}(V)=\left(\F_\a^{(1)}(V)\right)_{\a=1}^{\a^\dia},&&
\F_\a^{(1)}(V)=\left(\wt{A}_\a(V)\,;\,C_\a(V)\,;\,E_\a(V)\right).\cr
\end{array}
\end{equation}
\end{definition}
\begin{remark}
Due to Lemma \ref{PsiAnalyticity} (ii), $\F^{(1)}$ is well-defined and real-analytic in
$\cB^0_\R(V^\dia,r^\dia)$, if $r^\dia\!>\!0$ is small enough. Note that $\wt{\F}^{(1)}$ can be
reconstructed from $\F^{(1)}$ and the total number of real parameters containing in $\F^{(1)}$
is $2N(k_1^\dia+k_2^\dia+..+k_{\a^\dia}^\dia)=2N^2(n^\dia\!-\!1)$.
\end{remark}

We pass to the design of the second component $\F^{(2)}$. The main purpose of (rather
technical) Definition \ref{aceYUSdef} is to combine heterogeneous objects from
(\ref{AsymptInThm}) into one object having "nice" asymptotics as $n\to\iy$ (see Proposition
\ref{PropF2}).

\smallskip

Due to Proposition \ref{aBnjRoughAsympt}, if $r^\dia\!>\!0$ is sufficiently small, then
\begin{equation}
\label{xABclose} |\wt{A}_{n,j}(V)|=O(n^{-2} \ve_n(W))\quad \mathrm{and} \quad |\wt{B}_{n,j}(V)
- 2\pi^2n^2 P_j^0|=O(n^2\ve_n(W)).
\end{equation}
In particular, if $V\in\cB^0_\R(V^\dia,r^\dia)$, then factorization (\ref{FactorCE}) is
well-defined for all $n\ge n^\dia$. Recall that $k_{n,j}^\dia=1$, so $\wt{A}_{n,j}(V)$ and
$C_{n,j}(V)>0$ are real numbers.
\begin{definition}
\label{aceYUSdef} Let $V\in\cB^0_\R(V^\dia,r^\dia)$ and $r^\dia>0$ be sufficiently small.
Introduce two numbers $a_{n,j}(V), c_{n,j}(V)\in\R$ and one vector $e_{n,j}(V)\in\C^N$ such
that $\langle e_{n,j},e_j^0\rangle =1$ as
\[
\vphantom{\Big|}a_{n,j}(V)=2\pi^2n^2\wt{A}_{n,j}(V),\quad
c_{n,j}(V)=\left[(2\pi^2n^2)^{-1}C_{n,j}(V)\right]^{\frac{1}{2}}\!,\quad e_{n,j}(V)= e_j^0 +
E_{n,j}(V)e_j^0.
\]
Furthermore, define $N\!\ts\!N$ matrix $Y_n=Y_n(V)\in\C^{N\ts N}$ by
\[
\vphantom{\Big|}Y_n=\left(\begin{array}{ccccccc}\exp[ia_{n,1}]\cdot c_{n,1}\cdot e_{n,1} &;&
\exp[ia_{n,2}]\cdot c_{n,2}\cdot e_{n,2} &; & ... & ; & \exp[ia_{n,N}]\cdot c_{n,N}\cdot
e_{n,N}\end{array}\right)
\]
and let
\[
Y_n(V) = U_n(V)S_n(V),\qquad U_n^*=U_n^{-1},\quad S_n^*=S_n>0,
\]
be its polar decomposition.
\end{definition}
Note that all $\wt{A}_{n,j}$, $\wt{B}_{n,j}$, $j=1,2,..,N$, can be easily reconstructed from
$U_n$, $S_n$. Factorization (\ref{FactorCE}) reads now as
\[
(2\pi^2n^2)^{-1}\wt{B}_{n,j}=c_{n,j}^2\cdot e_{n,j}e_{n,j}^*,
\]
so (\ref{xABclose}) gives
\[
|a_{n,j}(V)|\,,\,|c_{n,j}(V)-1|\,,\,|e_{n,j}(V) - e_j^0|=O(\ve_n(W))
\]
uniformly for $n\ge n^\dia$. Hence,
\begin{equation}
\label{xYUSclose} |Y_n(V)-I_N|\,,\,|U_n(V)-I_N|\,,\,|S_n(V)-I_N|=O(\ve_n(W))
\end{equation}
uniformly for $n\ge n^\dia$ and $\det Y_n(V)\ne 0$ for all $V\in\cB^0_\R(V^\dia,r^\dia)$, if
$r^\dia$ is small enough.

\begin{definition}
\label{DefPhi2} Formally introduce the second component of the mapping $\F$ by
\[
\begin{array}{lcl}
\vphantom{\Big|}\F^{(2)} & : & V \mapsto \F^{(2)}(V)=(\F^{(2)}_n(V))_{n=n^\dia}^{+\iy}, \cr
\F^{(2)}_n & = & \left(-i\log U_n\ ;\ 2\pi n\cdot (S_n-I_N)\right)\ :\
\cB^0_\R(V^\dia,r^\dia)\to \C^{N\ts N}_\R\oplus \C^{N\ts N}_\R,
\end{array}
\]
where $\log U_n=(U_n\!-\!I_N)-\frac{1}{2}(U_n\!-\!I_N)^2+\frac{1}{3}(U_n\!-\!I_N)^3-...$
\end{definition}

\noindent Recall that $\wt{A}_{n,j}(V^\dia)=0$ and $\wt{B}_{n,j}(V^\dia)=2\pi^2n^2 P_j^0$ for
all $n\ge
n^\dia$. Thus, %$a_{n,j}(V^\dia)=0$, $c_{n,j}(V^\dia)=1$, $e_{n,j}(V^\dia)=e_j^0$,
\[
Y_n(V^\dia)=U_n(V^\dia)=S_n(V^\dia)=I_N\quad \mathrm{and}\quad
\F^{(2)}_n(V^\dia)=(0\,;\,0)\quad\mathrm{for\ all}\ n\ge n^\dia.
\]

\begin{proposition}
\label{PropF2} There exists $r^\dia>0$ such that the mapping
\[
\F^{(2)}:\cB^0_\R(V^\dia,r^\dia)\to\ell^2_\R(\N_{n^\dia} ; \C_\R^{N\ts N}\ts\C_\R^{N\ts N})
\]
is well-defined and real-analytic in $\cB^0_\R(V^\dia,r^\dia)$. Moreover, the Fr\'echet
derivative $d_{V^\dia}\F^{(2)}$ of $\F^{(2)}$ at $V^\dia$ is given by the Fr\'echet
derivatives of its components.
\end{proposition}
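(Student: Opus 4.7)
The plan is to establish three things, in order: (a) $\F^{(2)}$ actually maps $\cB^0_\R(V^\dia,r^\dia)$ into the given real $\ell^2$ space with uniformly bounded norm; (b) each coordinate $\F^{(2)}_n$ is real-analytic, as a map into the finite-dimensional space $\C^{N\ts N}_\R\oplus\C^{N\ts N}_\R$; (c) apply the standard weak-analyticity $\Rightarrow$ analyticity criterion between Hilbert spaces (as cited from \cite{PT} and \cite{Di}) to conclude that $\F^{(2)}$ itself is real-analytic and its Fr\'echet derivative is assembled from those of the components.

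The main work is (a), and the essential observation is that the polar decomposition can be linked to the averaged residue $B_n(V)=\sum_{j=1}^N\wt B_{n,j}(V)$. Indeed, since $a_{n,j}\in\R$ so $|\exp[ia_{n,j}]|^2=1$, one computes directly from Definition \ref{aceYUSdef} that
\[
Y_n Y_n^* \,=\, \sum_{j=1}^N c_{n,j}^2\, e_{n,j} e_{n,j}^* \,=\, (2\pi^2n^2)^{-1}\!\sum_{j=1}^N \wt B_{n,j} \,=\, (2\pi^2n^2)^{-1} B_n(V).
\]
Combined with $Y_n=U_nS_n$, this gives $S_n^2 = U_n^*\bigl[(2\pi^2n^2)^{-1}B_n\bigr]U_n$, hence $S_n^2-I_N = U_n^*\bigl[(2\pi^2n^2)^{-1}B_n-I_N\bigr]U_n$ since $U_n$ is unitary. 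By Proposition \ref{AsymptBn}, the sequence $\bigl(\pi n\cdot[(2\pi^2n^2)^{-1}B_n-I_N]\bigr)_{n\ge n^\dia}$ is $\ell^2$-summable uniformly on $\cB(V^\dia,r^\dia)$, so $\bigl(\pi n(S_n^2-I_N)\bigr)_n\in\ell^2$ uniformly. Since $|S_n-I_N|=O(\ve_n(W))$ by \eqref{xYUSclose}, the matrix $S_n+I_N=2I_N+O(\ve_n(W))$ is uniformly invertible for $r^\dia$ small, and therefore
\[
2\pi n(S_n-I_N) \,=\, 2\pi n(S_n^2-I_N)(S_n+I_N)^{-1} \,\in\, \ell^2
\]
uniformly on $\cB^0_\R(V^\dia,r^\dia)$. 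The bound on the first component $-i\log U_n$ is immediate: $|U_n-I_N|=O(\ve_n(W))$ by \eqref{xYUSclose} and $(\ve_n(W))_n\in\ell^2$, so $(-i\log U_n)_n\in\ell^2$ uniformly.

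For (b), start from Proposition \ref{aBnjRoughAsympt}(i): each $\wt A_{n,j}(V)$ and $\wt B_{n,j}(V)$ is complex-analytic in $\cB(V^\dia,r^\dia)$. Since $C_{n,j}(V)=\wt B_{n,j}^{11}(V)$ stays in a neighborhood of $2\pi^2n^2>0$, the square root defining $c_{n,j}$ is analytic; $E_{n,j}=\wt B_{n,j}^{21}[\wt B_{n,j}^{11}]^{-1}$ is analytic for the same reason; hence $a_{n,j},c_{n,j},e_{n,j}$ and therefore $Y_n(V)$ are real-analytic on $\cB^0_\R(V^\dia,r^\dia)$. The positive matrix $Y_n^*Y_n$ lies in a small neighborhood of $I_N$, so the principal square root $S_n=(Y_n^*Y_n)^{1/2}$, its inverse, $U_n=Y_n S_n^{-1}$, and finally the series $\log U_n$ (which converges absolutely because $U_n$ is close to $I_N$) are all real-analytic functions of $V$. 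Combining (a) and (b), the cited equivalence of analyticity with weak-analyticity plus local boundedness immediately yields that $\F^{(2)}$ is real-analytic between real Hilbert spaces with Fr\'echet derivative given coordinate-wise.

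The only nontrivial obstacle is the $\ell^2$-estimate for $\pi n(S_n-I_N)$: the crude bound $|S_n-I_N|=O(\ve_n(W))$ only gives $n|S_n-I_N|=O(n|\nh W{cn}|+\|W\|)$, which is not summable at all. The resolution is exactly the polar identity above, which trades this unsummable behavior for the genuinely $\ell^2$ averaged asymptotics of $B_n(V)$ supplied by Proposition \ref{AsymptBn}. Everything else (coordinate analyticity, transfer to the Hilbert-space setting, finite-dimensional analyticity of $\F^{(1)}$) is then bookkeeping.
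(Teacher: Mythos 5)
Your proposal is correct and takes essentially the same approach as the paper: both rest on the polar identity $Y_nY_n^*=(2\pi^2n^2)^{-1}B_n$ combined with the averaged asymptotics of Proposition~\ref{AsymptBn} to promote the crude $O(\ve_n(W))$ bound on $S_n-I_N$ to an $\ell^2_1$ bound, and both then invoke the weak-analyticity criterion. The only cosmetic difference is that you extract the bound via $S_n-I_N=(S_n^2-I_N)(S_n+I_N)^{-1}$ directly, whereas the paper packages the same computation through the auxiliary Hilbert-space mappings $\cY,\cS,\cU,\cZ,\wt\cS$.
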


\begin{proof}
Due to (\ref{xYUSclose}) and $\sum_{n=1}^{+\iy}|\ve_n(W)|^2=O(\|W\|^2)$, for sufficiently
small $r^\dia>0$ the mapping
\[
\cY:V\mapsto
(Y_n(V)\!-\!I_N)_{n=n^\dia}^{+\infty},\qquad\cB^0_\R(V^\dia,r^\dia)\to\ell^2_\R(\N_{n^\dia};
\C^{N\ts N}),
\]
is well-defined. Recall that $Y_n$ is some simple function of $\wt{A}_{n,j}$ and
$\wt{B}_{n,j}$, $j=1,2,..,N$ (see Definition \ref{aceYUSdef}). Using real-analyticity of the
first two components of the expanded mapping $\P^{(2)}$ (see Definition \ref{DefPsi} and Lemma
\ref{PsiAnalyticity}), we conclude that $\cY$ is real-analytic as a composition of
real-analytic mappings. Since $S_n=(Y_n^*Y_n)^{1/2}$ and $U_n=Y_nS_n^{-1}$, both mappings
\[
\cS:V\mapsto
(S_n(V)\!-\!I_N)_{n=n^\dia}^{+\infty},\qquad\cB^0_\R(V^\dia,r^\dia)\to\ell^2_\R(\N_{n^\dia};
\C^{N\ts N}_\R),
\]
and
\[
\cU:V\mapsto (-i\log U_n(V))_{n=n^\dia}^{+\infty},\ \quad
\cB^0_\R(V^\dia,r^\dia)\to\ell^2_\R(\N_{n^\dia}; \C^{N\ts N}_\R),
\]
are real-analytic too as compositions of $\cY$ with some simple coordinate-wise transforms.

In order to complete the proof it is sufficient to show that $\cS$ actually acts into "better"
space $\ell^2_1$. Note that
\[
Y_nY_n^* = \sum_{j=1}^N c_{n,j}^2\cdot e_{n,j}e_{n,j}^* = \frac{1}{2\pi^2n^2}\sum_{j=1}^N
\wt{B}_{n,j} = \frac{B_n}{2\pi^2n^2}.
\]
%where $B_n=B_n(V)$ is defined by (\ref{BnDef}).
Due to Lemma \ref{PsiAnalyticity}, the mapping
\[
\cZ:V\mapsto 2\pi n\cdot (Y_nY_n^*-I_N)_{n=n^\dia}^{+\iy},\qquad \cB^0_\R(V^\dia,r^\dia)\to
\ell^2_\R(\N_{n^\dia},\C^{N\ts N}_\R)
\]
(which is the third component of $\P^{(2)}$) is real-analytic. Using $S_n=
[U_n^{-1}(Y_nY_n^*)U_n]^{1/2}$, we obtain that the mapping
\[
\wt{\cS}: V\mapsto 2\pi n\cdot (S_n(V)\!-\!I_N)_{n=n^\dia}^{+\infty},\qquad
\cB^0_\R(V^\dia,r^\dia)\to\ell^2_\R(\N_{n^\dia}; \C^{N\ts N}_\R),
\]
is real-analytic as a result of some coordinate-wise transforms with $\cZ$ and $\cU$. Note
that $\F^{(2)}=(\cU\,;\,\wt{\cS})$. Since the Fr\'echet derivative $d_{V^\dia}\P$ is given by
the Fr\'echet derivatives of its components, the same holds true for all mappings $\cY$,
$\cS$, $\cU$, $\cZ$ and $\wt{\cS}$.% involved in the proof.
\end{proof}

\begin{remark}
\label{wtFanaliticity} The mapping $\F=(\F^{(1)}\,;\,\F^{(2)})$ is real-analytic too, since
both $\F^{(1)}$,~$\F^{(2)}$ are real-analytic, and its Fr\'echet derivative is given by the
Fr\'echet derivatives of $\F^{(1)}_\a$,~$\F^{(2)}_n$. Note that each $\F^{(2)}_n$, $n\ge
n^\dia$, contains $2N^2$ real parameters, i.e., exactly "the same amount of information" as,
say, the $n$-th Fourier coefficient $\nh Vn$.
\end{remark}

\subsection{Explicit form of the Fr\'echet derivative $\bf d_{V^\dia}\F$}

\label{SectFrechetDer1}

We denote by
\[
\cP^0: W(x)\mapsto W(x)-\nh W 0
\]
the orthogonal projector in $\cL^2([0,1];\C^{N\ts N}_\R)$ onto $\{W\in \cL^2([0,1];\C^{N\ts
N}_\R):\nh W0 = 0\}$.

Recall that the mapping $\F$ was introduced in Definitions \ref{DefPhi1} and \ref{DefPhi2}.
Due to Remark~\ref{wtFanaliticity}, $(d_{V^\dia}\F)W$ for $W\in\cP^0\cL^2([0,1];\C^{N\ts
N}_\R)$ is given by
\[
\begin{array}{rclclclcl}
(d_{V^\dia}\wt{A}_\a)W, && (d_{V^\dia}C_\a)W, && (d_{V^\dia}E_\a)W && \mathrm{for} &&
\a=1,2,..,\a^\dia\vphantom{|_\big|} \cr \mathrm{and} && (d_{V^\dia}U_n)W, && (d_{V^\dia}S_n)W
&& \mathrm{for} && n\ge n^\dia.
\end{array}
\]
%These derivatives are given in Corollary \ref{ACEgradCoord} and Proposition \ref{PropUSgrad}.
We need some preliminary calculations. Let
\[
\c^\dia_\a=\c(\cdot,\l^\dia_\a,V^\dia),\quad \vp^\dia_\a=\vp(\cdot,\l^\dia_\a,V^\dia)\quad
\mathrm{and\ so\ on}.
\]
Since $V^\dia$ is a {\it diagonal} potential, all these matrix-valued functions are {\it
diagonal}. For short, we will use (a bit careless) notations like
\[
\frac{\c^\dia_\a(t)}{(\c^\dia_\a)'(0)}:=\c^\dia_\a(t)[(\c^\dia_\a)'(0)]^{-1}=
[(\c^\dia_\a)'(0)]^{-1}\c^\dia_\a(t).
\]
Recall that $p_\a^\dia:\C^N\to\cE^\dia_\a$ and $q_\a^\dia:\C^N\to(\cE^\dia_\a)^\perp$ are some
{\it coordinate} projectors. Note that $\Ker[\c^\dia_\a(0)(q_\a^\dia)^*]=\{0\}$,
$\Ker[(\c^\dia_\a)'(0)(p_\a^\dia)^*]=\{0\}$ and $\Ker[\dot\c^\dia_\a(0)(p_\a^\dia)^*]=\{0\}$.
Thus, expressions
\[
[\c^\dia_\a(0)]^{-1}(q_\a^\dia)^*,\quad [(\c^\dia_\a)'(0)]^{-1}(p_\a^\dia)^*\quad
\mathrm{and}\quad [\dot\c^\dia_\a(0)]^{-1}(p_\a^\dia)^*
\]
(and their conjugates) are well-defined.

\begin{proposition}
\label{AGHgrad} For all $\a\ge 1$ and $W\in\cP^0\cL^2([0,1];\C^{N\ts N}_\R)$ the following
hold:
\begin{equation}
\label{wtAgrad} (d_{V^\dia}\wt{A}_\a)W=\, p_\a^\dia
\left[\int_0^1\frac{\c^\dia_\a(t)}{(\c^\dia_\a)'(0)}\,W(t)
\frac{\c^\dia_\a(t)}{(\c^\dia_\a)'(0)}\,dt\right]\!(p_\a^\dia)^*,
\end{equation}
\begin{equation}
\label{Egrad} (d_{V^\dia}E_\a)W\, = - q_\a^\dia
\left[\int_0^1\frac{\c^\dia_\a(t)}{\c^\dia_\a(0)}\,W(t)
\frac{\c^\dia_\a(t)}{(\c^\dia_\a)'(0)}\,dt\right]\!(p_\a^\dia)^*
\end{equation}
and
\begin{equation}
\label{Cgrad} (d_{V^\dia}C_\a)W= p_\a^\dia\lt[
\int_0^1\lt(\frac{\xi^\dia_\a(t)}{\dot\c^\dia_\a(0)}\,W(t)\frac{\c^\dia_\a(t)}{\dot\c^\dia_\a(0)}
+ \frac{\c^\dia_\a(t)}{\dot\c^\dia_\a(0)}\,W(t)\frac{\xi^\dia_\a(t)}{\dot\c^\dia_\a(0)}\rt)dt
\rt]\!(p_\a^\dia)^*,
\end{equation}
where
\begin{equation}
\label{XiDef} \xi^\dia_\a(t)\equiv
\dot\c^\dia_\a(t)-\frac{\ddot\c^\dia_\a(0)}{2\dot\c^\dia_\a(0)}\,\c^\dia_\a(t).
\end{equation}
\end{proposition}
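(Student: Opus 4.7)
The plan is to differentiate the defining relations for $\wt A_\a, C_\a, E_\a$ using the first-order expansions (\ref{ChiW=})-(\ref{Chi'W=}), combined with the block-vanishing facts at $V^\dia$, namely $p_\a^\dia \c_\a^\dia(0)=0$, $A_\a^{11}(V^\dia)=A_\a^{12}(V^\dia)=A_\a^{21}(V^\dia)=0$, and $\wt B_\a^{21}(V^\dia)=0$. Throughout, the Fr\'echet derivatives of $\c$ and $\c'$ are given by the explicit integral kernels read off from (\ref{ChiW=})-(\ref{Chi'W=}); applying the standard product rule to $A_\a=\c(\c')^{-1}$ and $M=\c'\c^{-1}$ then reduces each assertion to an explicit residue calculation.

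For (\ref{wtAgrad}), I would first observe that the correction $A_\a^{12}(A_\a^{22})^{-1}A_\a^{21}$ has zero Fr\'echet derivative at $V^\dia$ (each summand of the product rule contains one of $A_\a^{11}, A_\a^{12}, A_\a^{21}$ as a factor), hence $(d_{V^\dia}\wt A_\a)W = p_\a^\dia(d_{V^\dia}A_\a)W(p_\a^\dia)^*$; the second term in $d_V[\c(\c')^{-1}]$ drops because of $p_\a^\dia\c_\a^\dia(0)=0$. What survives is $p_\a^\dia\int_0^1\vp_\a^\dia(t) W(t)\c_\a^\dia(t)\,dt\cdot[(\c_\a^\dia)'(0)]^{-1}(p_\a^\dia)^*$. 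Since $V^\dia$ is diagonal, in each coordinate direction within $\cE_\a^\dia$ the scalar solutions $\vp^\dia$ and $\c^\dia$ both vanish at $0$ and are therefore proportional, which yields the pointwise identity $p_\a^\dia\vp_\a^\dia(t)=p_\a^\dia\c_\a^\dia(t)[(\c_\a^\dia)'(0)]^{-1}$ and hence (\ref{wtAgrad}).

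For (\ref{Egrad}), since $\wt B_\a^{21}(V^\dia)=0$ and $[\wt B_\a^{11}(V^\dia)]^{-1}=g_\a^\dia$, the chain rule gives $(d_{V^\dia}E_\a)W = q_\a^\dia(d_{V^\dia}\wt B_\a)W(p_\a^\dia)^*g_\a^\dia$. I would then differentiate the contour integral (\ref{wtBdef}) and evaluate the residue at $\l_\a^\dia$: both summands of $d_V M$ produce poles only through the factor $[\c^\dia(0,\l)]^{-1}(p_\a^\dia)^*$, whose Laurent expansion begins with $(\l-\l_\a^\dia)^{-1}[\dot\c_\a^\dia(0)]^{-1}(p_\a^\dia)^*$, while $q_\a^\dia\c'(\c)^{-1}$ is regular at $\l_\a^\dia$ when restricted to the $Q_\a^\dia$-block. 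Using the relation $[\dot\c_\a^\dia(0)]^{-1}(p_\a^\dia)^*g_\a^\dia = -[(\c_\a^\dia)'(0)]^{-1}(p_\a^\dia)^*$, which follows directly from $B_\a^\dia=-\res_{\l_\a^\dia}M(\cdot,V^\dia)$, the two residues combine into
\[
-q_\a^\dia\left[\int_0^1\bigl(\vt_\a^\dia(t)+(\c_\a^\dia)'(0)[\c_\a^\dia(0)]^{-1}\vp_\a^\dia(t)\bigr)W(t)\c_\a^\dia(t)\,dt\right][(\c_\a^\dia)'(0)]^{-1}(p_\a^\dia)^*.
\]
The Wronskian-type identity $\c_\a^\dia(t)=\c_\a^\dia(0)\vt_\a^\dia(t)+(\c_\a^\dia)'(0)\vp_\a^\dia(t)$ (valid on the $Q_\a^\dia$-block where $[\c_\a^\dia(0)]^{-1}$ exists) collapses the bracket to $\c_\a^\dia(t)[\c_\a^\dia(0)]^{-1}$, and together with the proportionality $p_\a^\dia\vp_\a^\dia=p_\a^\dia\c_\a^\dia[(\c_\a^\dia)'(0)]^{-1}$ used on the right factor, this yields (\ref{Egrad}).

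The derivation of (\ref{Cgrad}) follows the same pattern, starting from $(d_{V^\dia}C_\a)W = p_\a^\dia(d_{V^\dia}\wt B_\a)W(p_\a^\dia)^*$; the new feature is that now both $p_\a^\dia[\c^\dia(0,\l)]^{-1}$ and $[\c^\dia(0,\l)]^{-1}(p_\a^\dia)^*$ have simple poles at $\l_\a^\dia$, so each of the two summands of $d_V M$ contributes a double-pole term and a simple-pole term, and the residue calculation requires the next-order Laurent expansion
\[
p_\a^\dia\c^\dia(t,\l)[\c^\dia(0,\l)]^{-1}(p_\a^\dia)^* = \tfrac{1}{\l-\l_\a^\dia}\,p_\a^\dia\c_\a^\dia(t)[\dot\c_\a^\dia(0)]^{-1}(p_\a^\dia)^* + p_\a^\dia\xi_\a^\dia(t)[\dot\c_\a^\dia(0)]^{-1}(p_\a^\dia)^* + O(\l-\l_\a^\dia),
\]
obtained by Taylor-expanding $\c^\dia(t,\l)$ and $\c^\dia(0,\l)(p_\a^\dia)^*$ to second order and inverting in the $P_\a^\dia$-block; the constant-order term reproduces (\ref{XiDef}) precisely because diagonality of $V^\dia$ makes all the factors $\c_\a^\dia(0),\dot\c_\a^\dia(0),\ddot\c_\a^\dia(0)$ commute in that block. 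The symmetric structure $\xi^\dia W\c^\dia+\c^\dia W\xi^\dia$ in (\ref{Cgrad}) then arises because the two summands of $d_V M$ place the constant-order piece on opposite sides of the $W$-integrand. The main technical obstacle will be the bookkeeping in this last step: correctly collecting the constant-order contributions to the residue from the product of two $\c^{-1}$ factors, and verifying that the unwanted double-pole terms either cancel between the two summands of $d_V M$ or combine into the advertised $\xi^\dia$-structure via the same Wronskian identity used for (\ref{Egrad}).
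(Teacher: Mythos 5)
Your proposal follows essentially the same route as the paper: differentiate $A_\a=\c(\c')^{-1}$ and the contour integral for $\wt B_\a$ using the linearizations (\ref{ChiW=})--(\ref{Chi'W=}), exploit the block-vanishing at $V^\dia$ and the proportionality $p_\a^\dia\vp_\a^\dia\equiv p_\a^\dia\c_\a^\dia[(\c_\a^\dia)'(0)]^{-1}$, and for $C_\a$ extract the residue at the second-order pole via the Laurent expansion whose constant-order term produces $\xi_\a^\dia/\dot\c_\a^\dia(0)$ — all of which matches the paper's computation. One small caveat: your closing heuristic that the two summands of $d_VM$ "place the constant-order piece on opposite sides of $W$" is not quite how the symmetry arises (the double pole sits entirely in the second summand, and the $\xi\,W\,\c+\c\,W\,\xi$ structure emerges after combining with the Wronskian-type identity $\vt_\a^\dia+\frac{(\dot\c_\a^\dia)'(0)}{\dot\c_\a^\dia(0)}\vp_\a^\dia+\frac{(\c_\a^\dia)'(0)}{\dot\c_\a^\dia(0)}\dot\vp_\a^\dia\equiv\dot\c_\a^\dia/\dot\c_\a^\dia(0)$ and $p_\a^\dia(\c_\a^\dia)'(0)\vp_\a^\dia\equiv p_\a^\dia\c_\a^\dia$), but you correctly flag this as bookkeeping to be verified, and the key Laurent identity you wrote down is exactly the one the paper uses.
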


\begin{proof}
It follows from (\ref{ChiW=}) and (\ref{Chi'W=}) that
\[
\begin{array}{l} \displaystyle
(d_{V^\dia}\c(0,\l^\dia_\a))W\!=\!\int_0^1\!\vp^\dia_\a(t)W(t)\c^\dia_\a(t)dt, \quad
(d_{V^\dia}\c'(0,\l^\dia_\a))W\!=-\!\int_0^1\!\vt^\dia_\a(t)W(t)\c^\dia_\a(t)dt
\vphantom{\int_\big|}\cr\displaystyle \mathrm{and}\qquad
(d_{V^\dia}\dot{\c}(0,\l^\dia_\a))W=\int_0^1\left(\dot{\vp}^\dia_\a(t)W(t)\c^\dia_\a(t)+
\vp^\dia_\a(t)W(t)\dot{\c}^\dia_\a(t) \right)dt.
\end{array}
\]
Recall that $\wt{A}_\a = A_\a^{11}- A_\a^{12}(A_\a^{22})^{-1}A_\a^{21}$, where
\[
A_\a(V)=[\c(\c')^{-1}](0,\l_\a^\dia,V),\ \ A_\a^{11}=p_\a^\dia A_\a (p_\a^\dia)^*,\ \
A_\a^{12}=p_\a^\dia A_\a (q_\a^\dia)^*\ \ \mathrm{and\ so\ on.}
\]
Due to $A_\a^{12}(V^\dia)=0$, $A_\a^{21}(V^\dia)=0$ and $p_\a^\dia\c_\a^\dia(0)=0$, one
obtains
\[
\begin{array}{l}
\displaystyle (d_{V^\dia} \wt{A}_\a)W= (d_{V^\dia}A_\a^{11})W =  p_\a^\dia
(d_{V^\dia}\c(0,\l_\a^\dia))\,W\,[(\c_\a^\dia)'(0)]^{-1}(p_\a^\dia)^*
\vphantom{|_\big|}\cr\displaystyle\hphantom{(d_{V^\dia}\wt{A}_\a)W=(d_{V^\dia}A_\a^{11})W} =
p_\a^\dia\lt[\int_0^1\vp^\dia_\a(t)W(t)
\frac{\c^\dia_\a(t)}{(\c^\dia_\a)'(0)}\,dt\rt]\!(p_\a^\dia)^*.
% = p_\a^\dia\lt[\int_0^1\frac{\c^\dia_\a(t)}{(\c^\dia_\a)'(0)}\,W(t)
%\frac{\c^\dia_\a(t)}{(\c^\dia_\a)'(0)}\,dt\rt]\!(p_\a^\dia)^*.
\end{array}
\]
This gives (\ref{wtAgrad}), since $p_\a^\dia\vp_\a^\dia(t)\equiv p_\a^\dia
\c_\a^\dia(t)[(\c_\a^\dia)'(0)]^{-1}$. Next,
\[
\begin{array}{l}
\displaystyle (d_{V^\dia}\wt{B}_\a)W= -\frac{1}{2\pi i}\oint_{|\l-\l_\a^\dia|=d^\dia}
(d_{V^\dia}(\c'\c^{-1})(0,\l))W\,d\l
\vphantom{\int_\big|}\cr\displaystyle\hphantom{(d_{V^\dia}\wt{B}_\a)W} =\frac{1}{2\pi
i}\oint_{|\l-\l_\a^\dia|=d^\dia} \left[-(d_{V^\dia}\c'(0,\l))W+
\frac{(\c^\dia)'(0,\l)}{\c^\dia(0,\l)}\,(d_{V^\dia}\c(0,\l))W\right]\!\frac{d\l}{\c^\dia(0,\l)}\,.\!
\end{array}
\]

Note that the diagonal matrix-valued function $[\c^\dia(0,\l)]^{-1}$ has the unique pole
(at~$\l_\a^\dia$) inside of the contour of integration and
\[
\frac{I_N}{\c^\dia(0,\l)}= P_\a^\dia\lt[\frac{I_N}{\dot\c^\dia_\a(0)(\l\!-\!\l_\a^\dia)}-
\frac{\ddot\c^\dia_\a(0)}{2[\dot\c^\dia_\a(0)]^2}\rt]P_\a^\dia\, +\, Q_\a^\dia
\frac{I_N}{\c^\dia_\a(0)}\,Q_\a^\dia + O(\l\!-\!\l_\a^\dia)\ \ \mathrm{as}\ \l\to\l_\a,
\]
where $Q_\a^\dia=(q_\a^\dia)^*q_\a^\dia=I_N-P_\a^\dia$. Recall that $E_\a=\wt{B}^{21}_\a
[\wt{B}^{11}_\a]^{-1}$ and $\wt{B}_\a^{21}(V^\dia)=0$.
%and $\wt{B}_\a^{11}(V^\dia)=-p_\a^\dia(\c^\dia_\a)'(0)[\dot\c^\dia_\a(0)]^{-1}(p_\a^\dia)^*$
Thus,
\[
(d_{V^\dia}E_\a)W= (d_{V^\dia}\wt{B}_\a^{21})W \cdot [\wt{B}_\a^{11}(V^\dia)]^{-1} = -
 (d_{V^\dia}\wt{B}_\a^{21})W \cdot p_\a^\dia
\frac{\dot\c_\a^\dia(0)}{(\c_\a^\dia)'(0)}\, (p_\a^\dia)^*
\]
and%, since $q_\a^\dia[\c_\a^\dia(0)]^{-1}$ has no singularity at $\l_\a^\dia$, one has
\[
(d_{V^\dia}\wt{B}_\a^{21})W= %q_\a^\dia(d_{V^\dia}\wt{B}_\a)W(p_\a^\dia)^*\]\[=
q_\a^\dia\lt[ \int_0^1
\lt(\vt^\dia_\a(t)+\frac{(\c^\dia_\a)'(0)}{\c^\dia_\a(0)}\,\vp^\dia_\a(t)\rt)W(t)
\c^\dia_\a(t)\frac{I_N}{\dot\c^\dia_\a(0)}\,dt\rt](p_\a^\dia)^*.
\]
Using $\c^\dia_\a(0)\vt^\dia_\a(t)+(\c^\dia_\a)'(0)\vp^\dia_\a(t)\equiv \c^\dia_\a(t)$, one
obtains (\ref{Egrad}).
%\[
%(d_{V^\dia}E_\a)W= (d_{V^\dia}\wt{B}_\a^{21})W \cdot [\wt{B}_\a^{11}(V^\dia)]^{-1} = -
%q_\a^\dia\lt[ \int_0^1 \frac{\c^\dia_\a(t)}{\c^\dia_\a(0)}\,W(t)
%\frac{\c^\dia_\a(t)}{(\c^\dia_\a)'(0)}\, dt\rt](p_\a^\dia)^*.
%\]

\smallskip

Furthermore, $C_\a(V)=\wt{B}_\a^{11}(V)=p^\dia_\a \wt{B}_\a(V) (p^\dia_\a)^*$. In contrast to
$(d_{V^\dia}\wt{B}^{21}_\a)W$, we do not have cancellations of the singularities by the
projectors, so one should find the residue at the second order pole $\l_\a^\dia$.
Straightforward calculations give
\[
(d_{V^\dia}C_\a)W \ = \ \res_{\l=\l_\a} p_\a^\dia\lt[-(d_{V^\dia}\c'(0,\l))W+
\frac{(\c^\dia)'(0,\l)}{\c^\dia(0,\l)}\,(d_{V^\dia}\c(0,\l))W\rt]
\frac{I_N}{\c^\dia(0,\l)}\,(p^\dia_\a)^*
\]
\[
\begin{array}{l}
\displaystyle  = p_\a^\dia\lt[\int_0^1 \lt(\lt[\,\vt^\dia_\a(t)W(t)\c_\a^\dia(t)+
\lt(\frac{(\dot\c^\dia_\a)'(0)}{\dot\c^\dia_\a(0)}\,-\frac{(\c^\dia_\a)'(0)\ddot\c^\dia_\a(0)}
{2[\dot\c^\dia_\a(0)]^2}\rt)\vp^\dia_\a(t)W(t)\c^\dia_\a(t)
\vphantom{\int_\big|}\cr\displaystyle\hphantom{=p_\a^\dia\lt[\int_0^1\lt(\lt[\,\vt^\dia_\a(t)W(t)\c_\a^\dia(t)}
+\frac{(\c^\dia_\a)'(0)}{\dot\c^\dia_\a(0)}
\left(\vphantom{\Big|}\dot{\vp}^\dia_\a(t)W(t)\c^\dia_\a(t)+
\vp^\dia_\a(t)W(t)\dot{\c}^\dia_\a(t)\right)\rt]\frac{I_N}{\dot\c^\dia_\a(0)}
\vphantom{\int_\big|^\big|}\cr\displaystyle\hphantom{=p_\a^\dia\lt[\int_0^1\lt(}
-\,\frac{I_N}{\dot\c^\dia_\a(0)} (\c^\dia_\a)'(0)\vp^\dia_\a
(t)W(t)\frac{\c^\dia_\a(t)\ddot\c^\dia_\a(0)}{2[\dot\c^\dia_\a(0)]^2}\rt)dt\rt]\!(p_\a^\dia)^*.
\vphantom{\int_\big|}
\end{array}
\]
Using the identities
\[
\vt^\dia_\a(t) + \frac{(\dot\c^\dia_\a)'(0)}{\dot\c^\dia_\a(0)}\,\vp^\dia_\a(t)+
\frac{(\c^\dia_\a)'(0)}{\dot\c^\dia_\a(0)}\,\dot{\vp}^\dia_\a(t) \equiv
\frac{\dot{\c}^\dia_\a(t)}{\dot\c^\dia_\a(0)}
\]
and  $p_\a^\dia (\c^\dia_\a)'(0)\vp^\dia_\a(t) \equiv p_\a^\dia \c^\dia_\a(t)$, one obtains
(\ref{Cgrad}).
%\[
%(d_{V^\dia}C_\a)W= p_\a^\dia\lt[
%\int_0^1\lt(\frac{\xi^\dia_\a(t)}{\dot\c^\dia_\a(0)}\,W(t)\frac{\c^\dia_\a(t)}{\dot\c^\dia_\a(0)}
%+ \frac{\c^\dia_\a(t)}{\dot\c^\dia_\a(0)}\,W(t)\frac{\xi^\dia_\a(t)}{\dot\c^\dia_\a(0)}\rt)dt
%\rt]\!(p_\a^\dia)^*.\qedhere
%\]
\end{proof}

Introduce the functions
\[
\c_\a^{\dia,j}(t)\equiv [\c_\a^\dia(t)]_{jj} \equiv \c(t,\l_\a^\dia,v^\dia_{jj})\quad
\mathrm{and}\quad \x_\a^{\dia,j}(t)\equiv [\x_\a^\dia(t)]_{jj} \equiv
\x(t,\l_\a^\dia,v^\dia_{jj}),
\]
where $\x^\dia_\a$ is given by (\ref{XiDef}).

\begin{corollary}
\label{ACEgradCoord} Let $\a\ge 1$ and $I(\a)=\{s:\l_\a^\dia\in\s(v_{ss}^\dia)\}$ (by
definition, the set $I(\a)$ consists of $k_\a^\dia$ indices). Then, for all
$W\in\cP^0\cL^2([0,1];\C^{N\ts N}_\R)$,
\[
[(d_{V^\dia}\wt{A}_\a)W]_{jk}=\langle W_{jk}\,,\,u_\a^{(jk)}\rangle,\qquad
[(d_{V^\dia}C_\a)W]_{jk}=\langle W_{jk}\,,\,\wt{u}_\a^{(jk)}\rangle,\qquad j,k\in I(\a),
\]
where for all $\l_\a^\dia\in\s(v_{jj}^\dia)\cap\s(v_{kk}^\dia)$ the functions $u_\a^{(jk)}$
and $\wt{u}_\a^{(jk)}$ are given by
\begin{equation}
\label{UaJKdef1}
\begin{array}{l}
u_\a^{(jk)}(t)\equiv [(\c^{\dia,j}_\a)'(0)(\c^{\dia,k}_\a)'(0)]^{-1}\cdot
\c^{\dia,j}_\a(t)\c^{\dia,k}_\a(t),\vphantom{|_\big|}\cr \wt{u}_\a^{(jk)}(t) \equiv
[\dot\c^{\dia,j}_\a(0)\dot\c^{\dia,k}_\a(0)]^{-1}\cdot
\left(\xi^{\dia,j}_\a(t)\c^{\dia,k}_\a(t)+ \c^{\dia,j}_\a(t)\xi^{\dia,k}_\a(t)\right).
\end{array}
\end{equation}
Furthermore,
\[
[(d_{V^\dia}E_\a)W]_{jk}= \langle W_{jk}\,,\,{u}_\a^{(jk)}\rangle,\qquad j\notin I(\a),\ k\in
I(\a),
\]
where for all $\l_\a^\dia\in\s(v_{kk}^\dia)\setminus \s(v_{jj}^\dia)$ the function
$u_\a^{(jk)}$ is given by
\begin{equation}
\label{UaJKdef2} u_\a^{(jk)}(t)\equiv \,-[\c^{\dia,j}_\a(0)(\c^{\dia,k}_\a)'(0)]^{-1}\cdot
\c^{\dia,j}_\a(t)\c^{\dia,k}_\a(t).
\end{equation}
\end{corollary}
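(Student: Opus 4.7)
The plan is simply to take the matrix-entry $(j,k)$ of each of the three identities \eqref{wtAgrad}, \eqref{Egrad}, \eqref{Cgrad} in Proposition \ref{AGHgrad}, exploiting the fact that the unperturbed potential $V^\dia = \diag\{v^\dia_{11},\ldots,v^\dia_{NN}\}$ is diagonal.

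First I would observe that since $V^\dia$ is diagonal, so is the matrix-valued solution $\c^\dia_\a(t)$, with diagonal entries $\c^{\dia,j}_\a(t) = \c(t,\l_\a^\dia,v^\dia_{jj})$, and likewise $\dot\c^\dia_\a(t)$, $(\c^\dia_\a)'(0)$, $\ddot\c^\dia_\a(0)$ and consequently $\xi^\dia_\a(t)$ defined by \eqref{XiDef}. The subspace $\cE^\dia_\a$ is the coordinate span of the vectors $\{e_j^0 : j\in I(\a)\}$, and so $p^\dia_\a$ (resp.\ $q^\dia_\a$) is the coordinate projector onto the coordinates indexed by $I(\a)$ (resp.\ by its complement). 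Thus, for any $N\!\times\!N$ matrix-valued $X(t)$,
\[
[p^\dia_\a X(\cdot)(p^\dia_\a)^*]_{jk}= X_{jk}(\cdot),\quad j,k\in I(\a),\qquad [q^\dia_\a X(\cdot)(p^\dia_\a)^*]_{jk}=X_{jk}(\cdot),\quad j\notin I(\a),\,k\in I(\a).
\]

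Second, I would plug $W$ into \eqref{wtAgrad} and compute the $(j,k)$-entry for $j,k\in I(\a)$. Since the two ``sandwich'' factors $\c^\dia_\a(t)/(\c^\dia_\a)'(0)$ are diagonal, matrix multiplication acts entry-wise on $W$, and one obtains
\[
[(d_{V^\dia}\wt{A}_\a)W]_{jk} = \int_0^1 \frac{\c^{\dia,j}_\a(t)\,\c^{\dia,k}_\a(t)}{(\c^{\dia,j}_\a)'(0)\,(\c^{\dia,k}_\a)'(0)}\, W_{jk}(t)\,dt,
\]
which is exactly $\langle W_{jk}, u_\a^{(jk)}\rangle$ with $u_\a^{(jk)}$ as in \eqref{UaJKdef1}; here one uses that $u_\a^{(jk)}$ is real-valued (because $v^\dia_{jj}$ is real and $\l_\a^\dia$ is real, the function $\c^{\dia,j}_\a$ is real on $[0,1]$), so no conjugation correction is needed. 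An entirely analogous extraction from \eqref{Egrad} gives the formula for $[(d_{V^\dia}E_\a)W]_{jk}$ with $j\notin I(\a)$, $k\in I(\a)$, matching \eqref{UaJKdef2} up to the minus sign already present in \eqref{Egrad}.

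Third, for $C_\a$ I would expand the $(j,k)$-entry of \eqref{Cgrad} in the same diagonal fashion. Both summands inside the integral produce diagonal factors to the left and right of $W(t)$, so
\[
[(d_{V^\dia}C_\a)W]_{jk}= \int_0^1 \frac{\xi^{\dia,j}_\a(t)\c^{\dia,k}_\a(t)+\c^{\dia,j}_\a(t)\xi^{\dia,k}_\a(t)}{\dot\c^{\dia,j}_\a(0)\,\dot\c^{\dia,k}_\a(0)}\, W_{jk}(t)\,dt,
\]
which is $\langle W_{jk},\wt u_\a^{(jk)}\rangle$ with $\wt u_\a^{(jk)}$ as in \eqref{UaJKdef1}. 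There is no real obstacle in the argument: everything is a bookkeeping exercise in entry-wise multiplication of diagonal matrices. The only small point to check is the reality of $u^{(jk)}_\a$ and $\wt u^{(jk)}_\a$, which justifies dropping the complex conjugate in the definition of the $\cL^2$-pairing.
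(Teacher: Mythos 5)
Your proof is correct and follows exactly the same route as the paper, which disposes of this corollary in one line (``Since $\c^\dia_\a$, $\x^\dia_\a$ are diagonal matrices, this is exactly the result of Proposition~\ref{AGHgrad} rewritten in the coordinate form''). You have simply spelled out the diagonal bookkeeping and the small but genuine point that $u_\a^{(jk)}$ and $\wt u_\a^{(jk)}$ are real-valued (since $v^\dia_{jj}$ and $\l_\a^\dia$ are real), which justifies dropping the conjugate in the $\cL^2$ pairing.
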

\begin{proof}
Since $\c^\dia_\a$, $\x^\dia_\a$ are diagonal matrices, this is exactly the result of
Proposition~\ref{AGHgrad} rewritten in the coordinate form.
\end{proof}

\begin{proposition}
\label{PropUSgrad} Let $n\ge n^\dia$ and $j,k=1,2,..,N$ be such that $j\ne k$. Then for all
$W\in\cP^0\cL^2([0,1];\C^{N\ts N}_\R)$ the following identities hold:
\begin{equation}
\label{YjjGrad} [(d_{V^\dia}Y_n)W]_{jj}= (4\pi^2n^2)^{-1}\langle
W_{jj}\,,\,\wt{u}_{n,j}^{(jj)}\rangle + i\cdot 2\pi^2n^2\langle
W_{jj}\,,\,u_{n,j}^{(jj)}\rangle,
\end{equation}
where the functions $u_{n,j}^{(jj)}$ and $\wt{u}_{n,j}^{(jj)}$ are given by (\ref{UaJKdef1}),
and
\begin{equation}
\label{YjkGrad} [(d_{V^\dia}Y_n)W]_{jk}= \langle W_{jk}\,,\,u_{n,k}^{(jk)}\rangle,
\end{equation}
where the functions $u_{n,k}^{(jk)}$ are given by (\ref{UaJKdef2}). Furthermore,
\begin{equation}
\label{SUgrad}
\begin{array}{l}
(d_{V^\dia}S_n)W = \frac{1}{2}\left((d_{V^\dia}Y_n)W+[(d_{V^\dia}Y_n)W]^*\right),
\vphantom{|_\big|}\cr (d_{V^\dia}U_n)W =
\frac{1}{2}\left((d_{V^\dia}Y_n)W-[(d_{V^\dia}Y_n)W]^*\right).
\end{array}
\end{equation}
\end{proposition}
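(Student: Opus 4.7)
The plan is to differentiate the column-wise definition of $Y_n$ from Definition \ref{aceYUSdef} directly, identify each resulting gradient via Corollary \ref{ACEgradCoord}, and then extract $(d_{V^\dia}U_n)W$ and $(d_{V^\dia}S_n)W$ from $(d_{V^\dia}Y_n)W$ by using unitarity of $U_n$ and self-adjointness of $S_n$.

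The $k$-th column of $Y_n$ is $\exp[ia_{n,k}]\,c_{n,k}\,e_{n,k}$, so
\[
[Y_n]_{jk}=\exp[ia_{n,k}]\,c_{n,k}\,\langle e_{n,k},e_j^0\rangle.
\]
At $V^\dia$ one has $a_{n,k}=0$, $c_{n,k}=1$ and $e_{n,k}=e_k^0$, whence $\langle e_{n,k},e_j^0\rangle|_{V^\dia}=\delta_{jk}$ and $Y_n(V^\dia)=I_N$. The product rule therefore gives, at $V^\dia$,
\[
[(d_{V^\dia}Y_n)W]_{jk}=\delta_{jk}\bigl(i\,(d_{V^\dia}a_{n,k})W+(d_{V^\dia}c_{n,k})W\bigr)+\langle(d_{V^\dia}e_{n,k})W,e_j^0\rangle.
\]
The normalisation $\langle e_{n,j},e_j^0\rangle\equiv 1$ kills the last term for $j=k$, while for $j\ne k$ it is the only surviving contribution and equals $[(d_{V^\dia}E_{n,k})W]_{jk}$ by the definition of $e_{n,k}$.

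Next I would substitute the individual derivatives. For $n\ge n^\dia$ one has $k_{n,j}^\dia=1$ and $I(n,j)=\{j\}$, so Corollary \ref{ACEgradCoord} yields the scalar identities $(d_{V^\dia}\wt{A}_{n,j})W=\langle W_{jj},u_{n,j}^{(jj)}\rangle$, $(d_{V^\dia}C_{n,j})W=\langle W_{jj},\wt{u}_{n,j}^{(jj)}\rangle$ and $[(d_{V^\dia}E_{n,k})W]_{jk}=\langle W_{jk},u_{n,k}^{(jk)}\rangle$ for $j\ne k$. Combined with the chain rule $da_{n,j}=2\pi^2n^2\,d\wt{A}_{n,j}$ and with the derivative of $c_{n,j}=[(2\pi^2n^2)^{-1}C_{n,j}]^{1/2}$ at $C_{n,j}(V^\dia)=2\pi^2n^2$, which equals the scalar $(4\pi^2n^2)^{-1}$, this produces (\ref{YjjGrad}) on the diagonal and (\ref{YjkGrad}) off the diagonal.

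For the polar-decomposition identities (\ref{SUgrad}), I would use that $U_n(V^\dia)=S_n(V^\dia)=I_N$. Writing $\delta Y=(d_{V^\dia}Y_n)W$, $\delta U=(d_{V^\dia}U_n)W$, $\delta S=(d_{V^\dia}S_n)W$, the relation $Y_n=U_nS_n$ linearises at $V^\dia$ to $\delta Y=\delta U+\delta S$. Differentiating $U_n^*U_n=I_N$ gives $\delta U+\delta U^*=0$, i.e.\ $\delta U$ is anti-self-adjoint, while $S_n^*=S_n$ gives $\delta S=\delta S^*$. Taking adjoints of $\delta Y=\delta U+\delta S$ and solving the $2\!\ts\!2$ linear system yields $\delta S=\frac{1}{2}(\delta Y+\delta Y^*)$ and $\delta U=\frac{1}{2}(\delta Y-\delta Y^*)$, which is (\ref{SUgrad}). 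The argument is essentially bookkeeping; the only place where one can slip is keeping track of which derivatives contribute in the $j=k$ versus $j\ne k$ entries of $Y_n$, and this is precisely why the normalisation $\langle e_{n,j},e_j^0\rangle\equiv 1$ was built into Definition \ref{aceYUSdef}.
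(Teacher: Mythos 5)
Your proof is correct and follows essentially the same path as the paper's: differentiate the column-wise definition of $Y_n$ at $V^\dia$ using $a_{n,k}(V^\dia)=0$, $c_{n,k}(V^\dia)=1$, $E_{n,k}(V^\dia)=0$, apply Corollary \ref{ACEgradCoord} with the chain-rule factors $2\pi^2n^2$ and $(4\pi^2n^2)^{-1}$, and then linearize the polar decomposition at the identity. The only difference is that you spell out the self-/anti-self-adjointness bookkeeping that the paper dismisses as immediate, which is a harmless elaboration.
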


\begin{proof}
By definition of $Y_n$,
\[
[(d_{V^\dia}Y_n)W]_{jk}= \left\langle(d_{V^\dia}[\exp(ia_{n,k})\cdot c_{n,k}\cdot
e_{n,k}])W\,,\,e_j^0\right\rangle.
\]
Recall that $a_{n,k}(V^\dia)=0$, $c_{n,k}(V^\dia)=1$, $e_{n,k}(V)=e_k^0+E_{n,k}(V)e_k^0$ and
$E_{n,k}(V^\dia)=0$. Thus,
\[
[(d_{V^\dia}Y_n)W]_{jj}=(d_{V^\dia}c_{n,j})W + i\cdot (d_{V^\dia}a_{n,j})W=
\frac{(d_{V^\dia}C_{n,j})W}{4\pi^2n^2} +i\cdot 2\pi^2n^2 (d_{V^\dia}\wt{A}_{n,j})W
\]
and
\[
[(d_{V^\dia}Y_n)W]_{jk}= [(d_{V^\dia}E_{n,k})W]_{j}\,.
\]
Due to Corollary \ref{ACEgradCoord}, one obtains (\ref{YjjGrad}) and (\ref{YjkGrad}). Recall
that $S_n=(Y_n^*Y_n)^{1/2}$, $U_n=Y_nS_n^{-1}$ and $Y_n(V^\dia)=U_n(V^\dia)=S_n(V^\dia)=I_N$.
This immediately gives (\ref{SUgrad}).
\end{proof}

\subsection{Invertibility of the Fr\'echet derivative $\bf d_{V^\dia}\F$}

\label{SectFrechetDer2} Due to Remark \ref{wtFanaliticity},
\[
\begin{array}{ll}
(d_{V^\dia}\F^{(1)}_\a)W =
((d_{V^\dia}\wt{A}_\a)W\,;\,(d_{V^\dia}C_\a)W\,;\,(d_{V^\dia}E_\a)W), & \a=1,2,..,\a^\dia,
\vphantom{|_\big|}\cr (d_{V^\dia}\F^{(2)}_n)W = (-i(d_{V^\dia}U_n)W\,;\,2\pi
n(d_{V^\dia}S_n)W), & n=n^\dia,n^\dia\!+\!1,...
\end{array}
\]
Recall that $W_{kj}=\ol{W_{jk}}$ for all $1\!\le\!k\!\le\!j\!\le\!N$. It immediately follows
from Corollary~\ref{ACEgradCoord} and Proposition \ref{PropUSgrad} that the entries of the
components of $(d_{V^\dia}\F)W$ are { \it
\begin{enumerate}

\smallskip

\item  for all $j=1,2,..,N$
(diagonal entries of {\rm (a)} $\wt{A}_\a$, $C_\a$ and {\rm (b)} $U_n$, $S_n$):

\smallskip

\begin{enumerate}
\item $\langle W_{jj}\,,\,u_\a^{(jj)}\rangle$,\quad $\langle
W_{jj}\,,\,\wt{u}_\a^{(jj)}\rangle$,\quad where $\a\le \a^\dia$ are such that\ \
$\l_\a^\dia\in\s(v_{jj}^\dia)$;

\smallskip

\item $2\pi^2n^2\cdot \langle W_{jj}\,,\,u_{n,j}^{(jj)}\rangle$,\quad $(2\pi n)^{-1}\cdot
\langle W_{jj}\,,\,\wt{u}_{n,j}^{(jj)}\rangle$,\quad for all $n\ge n^\dia$;
\end{enumerate}

\smallskip

\item   for all $1\!\le\!k\!<\!j\!\le\! N$
(non-diagonal entries of {\rm (a)} $\wt{A}_\a$, $C_\a$; {\rm (b)} $E_\a$; {\rm (c)} $U_n$,
$S_n$):

\smallskip

\begin{enumerate}
\item $\langle W_{jk}\,,\,u_\a^{(jk)}\rangle$,\quad  $\langle
W_{jk}\,,\,\wt{u}_\a^{(jk)}\rangle$\quad and their complex-conjugates\\ $\langle
\ol{W_{jk}}\,,\,u_\a^{(kj)}\rangle$,\quad $\langle \ol{W_{jk}}\,,\,\wt{u}_\a^{(kj)}\rangle$, \
where $\a\le \a^\dia$: $\l_\a^\dia\in\s(v_{jj}^\dia)\cap\s(v_{kk}^\dia)$;
%(note that $u_\a^{(jk)}=u_\a^{(kj)}$, $\wt{u}_\a^{(jk)}=\wt{u}_\a^{(kj)}$);

\smallskip

\item $\langle W_{jk}\,,\,u_\a^{(jk)}\rangle$, where $\a\le \a^\dia$ are such that
$\l_\a^\dia\in\s(v_{kk}^\dia)\setminus\s(v_{jj}^\dia)$; \\
$\langle \ol{W_{jk}}\,,\,u_\a^{(kj)}\rangle$, where $\a\le \a^\dia$ are such that
$\l_\a^\dia\in\s(v_{jj}^\dia)\setminus\s(v_{kk}^\dia)$;

\smallskip

\item $\frac{1}{2i}\cdot\langle W_{jk}\,,\,[u_{n,k}^{(jk)}\!-\!u_{n,j}^{(kj)}]\rangle$,\quad
$\pi n\cdot \langle W_{jk}\,,\,[u_{n,k}^{(jk)}\!+\!u_{n,j}^{(kj)}]\rangle$\quad and their conjugates \\
$\frac{1}{2i}\cdot\langle \ol{W_{jk}}\,,\,[u_{n,j}^{(kj)}\!-\!u_{n,k}^{(jk)}]\rangle$,\quad
$\pi n\cdot \langle \ol{W_{jk}}\,,\,[u_{n,j}^{(kj)}\!+\!u_{n,k}^{(jk)}]\rangle$,\quad  for all
$n\ge n^\dia$.
\end{enumerate}

\smallskip

\end{enumerate} }

\smallskip

\noindent Note that $u_\a^{(jk)}=u_\a^{(kj)}$ and $\wt{u}_\a^{(jk)}=\wt{u}_\a^{(kj)}$, if
$\l_\a\in\s(v_{jj}^\dia)\cap\s(v_{kk}^\dia)$.

\begin{definition}
\label{cUdef} For each $1\!\le\!k\!\le\!j\!\le\!N$ we introduce the collection of real scalar
functions
\[
\quad \cU^{(jj)} =\left\{\vphantom{\big|^|_|}u_\a^{(jj)}\,,\,\wt{u}_\a^{(jj)},\
\a\!\le\!\a^\dia:
\l_\a^\dia\!\in\!\s(v_{jj}^\dia)\right\}%_{\a=1}^{\a^\dia}
\cup \left\{\vphantom{\big|^|_|}2\pi^2n^2u_{n,j}^{(jj)}\,,\,(2\pi
n)^{-1}\,\wt{u}_{n,j}^{(jj)},\ n\!\ge\!n^\dia\right\}%_{n=n^\dia}^{+\infty},\ \quad
\]
\[
\begin{array}{ccl}
\vphantom{\big|_\big|}\cU^{(jk)} & \!\!\!=\!\!\! & \left\{u_\a^{(jk)},\wt{u}_\a^{(jk)},\
\a\!\le\!\a^\dia:\l_\a^\dia\!\in\!\s(v_{jj}^\dia)\cap\s(v_{kk}^\dia)\right\}%_{\a=1}^{\a^\dia}
\cr\vphantom{\big|^\big|_\big|} &  \!\!\!\cup\!\!\! & \left\{\vphantom{\big|^|_|}u_\a^{(jk)},\
\a\!\le\!\a^\dia:\l_\a^\dia\!\in\!\s(v_{kk}^\dia)\setminus\s(v_{jj}^\dia)\right\}%_{\a=1}^{\a^\dia}
\,\cup \left\{\vphantom{\big|^|_|}u_\a^{(kj)},\ \a\!\le\!\a^\dia:
\l_\a^\dia\!\in\!\s(v_{jj}^\dia)\setminus\s(v_{kk}^\dia)\right\}\cr \vphantom{\big|^\big|} &
\!\!\!\cup\!\!\! &
\left\{\vphantom{\big|^|_|}{\frac{1}{2}}[u_{n,k}^{(jk)}-u_{n,j}^{(kj)}]\,,\,\pi
n[u_{n,k}^{(jk)}+u_{n,j}^{(kj)}]\,,\ n\!\ge\!n^\dia\right\},%_{n=n^\dia}^{+\infty},%_{\a=1}^{\a^\dia},
\end{array}
\]
where the functions $u_\a^{(jk)}$ and $\wt{u}_\a^{(jk)}$ are given by (\ref{UaJKdef1}) and
(\ref{UaJKdef2}). Note that each collection $\cU^{(jk)}$ contains exactly $2(n^\dia\!-\!1)$
functions with "small" indices $\a\le\a^\dia$.
\end{definition}

\begin{remark}
\label{RemarkRiesz} Due to the arguments given above, in order to prove that
$[d_{V^\dia}\F]^{-1}$ is bounded, it is sufficient to prove that each $\cP^0\cU^{(jk)}$ is
a~Riesz basis of $\cP^0 \cL^2(0,1)$.
\end{remark}

\begin{lemma}
\label{BiortLemma} For each $1\!\le\!k\!\le\!j\!\le\!N$ there exists some collection of
functions \mbox{$\cV^{(jk)}\ss \cP^0 \cL^2(0,1)$} which is biorthogonal to $\cU^{(jk)}$ (and,
therefore, to $\cP^0\cU^{(jk)}$).
\end{lemma}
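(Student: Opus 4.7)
The plan is to exhibit $\cV^{(jk)}$ explicitly in terms of the scalar fundamental solutions of the diagonal entries $v^\dia_{jj}, v^\dia_{kk}$ of $V^\dia$ and verify biorthogonality by Wronskian (Lagrange-type) computations. The diagonal case $j=k$ and the off-diagonal case $j\ne k$ are treated separately.

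For $j=k$, the collection $\cU^{(jj)}$ is (up to explicit non-vanishing scalar factors) exactly the family of squared Dirichlet eigenfunctions $(\c^{\dia,j}_\a)^2$ and their $\l$-derivatives $\xi^{\dia,j}_\a\c^{\dia,j}_\a$ indexed by $\l_\a^\dia\in\s(v^\dia_{jj})$, i.e.\ the input data of the \emph{scalar} inverse problem for the single potential $v^\dia_{jj}\in \cL^2(0,1)$. An explicit biorthogonal system built from $(\vp^{\dia,j}_\a)^2$ and primitives thereof has been constructed in the scalar Trubowitz theory \cite{PT}; after subtracting means to land in $\cP^0\cL^2(0,1)$ this yields the required $\cV^{(jj)}$.

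For $j\ne k$, I would take dual functions of the form $v_\b^{(jk)}(t)\propto\vp^{\dia,j}_\b(t)\vp^{\dia,k}_\b(t)$, with $\l$-differentiated variants corresponding to $\wt u_\a^{(jk)}$ and the combinations $\frac12(u_{n,k}^{(jk)}\pm u_{n,j}^{(kj)})$ appearing in $\cU^{(jk)}$. The key tool is the scalar Lagrange identity
\[
(\l_1-\l_2)\int_0^1 y_1 y_2\,dt=[y_1y_2'-y_1'y_2]_0^1
\]
for any two solutions $y_1,y_2$ of $-y''+v^\dia_{ss}y=\l y$ at spectral parameters $\l_1,\l_2$ ($s=j,k$). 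Applying it separately to the pairs $(\c^{\dia,j}_\a,\vp^{\dia,j}_\b)$ and $(\c^{\dia,k}_\a,\vp^{\dia,k}_\b)$, and using the Dirichlet conditions $\c^{\dia,s}_\a(1)=\vp^{\dia,s}_\b(0)=0$ (together with $\c^{\dia,s}_\a(0)=0$ whenever $\l_\a^\dia\in\s(v^\dia_{ss})$), reduces $\langle u_\a^{(jk)},v_\b^{(jk)}\rangle$ to boundary data that vanishes for $\a\ne\b$ and produces a nonzero residue for $\a=\b$.

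The main obstacle is the non-degeneracy of the diagonal pairing $\langle u_\a^{(jk)},v_\a^{(jk)}\rangle\ne 0$: for large indices it follows from the asymptotics implicit in (\ref{ChiDia0=}), while for the finitely many small indices $\a\le\a^\dia$ it relies precisely on the technical condition $\c'(0,\l_\a^\dia,v^\dia_{jj})\ne 0$ for all $\a\ge 1,\ j=1,\dots,N$ imposed on $V^\dia$ in Step~1 of Sect.~\ref{SectGeneralStrategy}. A secondary technicality is that when $\l_\a^\dia$ belongs to only one of $\s(v^\dia_{jj}),\s(v^\dia_{kk})$, the second Lagrange pair does not telescope and one has to choose $v_\b^{(jk)}$ instead as an appropriately normalized product of Green's functions for the two scalar operators at $\l_\b^\dia$. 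Shifting each dual function by a constant so that it lies in $\cP^0\cL^2(0,1)$ is harmless since, in view of Remark~\ref{RemarkRiesz}, the pairings are ultimately tested against elements of $\cP^0\cU^{(jk)}\subset \cP^0\cL^2(0,1)$.
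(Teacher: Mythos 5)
Your overall strategy (dual functions built from products of scalar $\vp$-solutions, biorthogonality via a Wronskian computation) is the same as the paper's, but the specific choice of dual functions has a gap that would block the calculation.

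The paper does not use $\vp^{\dia,j}_\b\vp^{\dia,k}_\b$ (even shifted by a constant) as the dual to $\c^{\dia,j}_\a\c^{\dia,k}_\a$; it uses the $t$-derivative $[\vp^{\dia,j}_\b\vp^{\dia,k}_\b]'$, and similarly $[\dot\vp^{\dia,j}_\b\vp^{\dia,k}_\b+\vp^{\dia,j}_\b\dot\vp^{\dia,k}_\b]'$ for the $\l$-differentiated ones. The derivative is not cosmetic: the Wronskian trick hinges on the identity
\[
\bigl(\{\c^{\dia,j}_\a,\vp^{\dia,j}_\b\}\{\c^{\dia,k}_\a,\vp^{\dia,k}_\b\}\bigr)'
=(\l_\a-\l_\b)\Bigl[\c^{\dia,j}_\a\c^{\dia,k}_\a(\vp^{\dia,j}_\b\vp^{\dia,k}_\b)'-(\c^{\dia,j}_\a\c^{\dia,k}_\a)'\vp^{\dia,j}_\b\vp^{\dia,k}_\b\Bigr],
\]
so what integrates to pure boundary data is $\int_0^1\c^{\dia,j}_\a\c^{\dia,k}_\a\,[\vp^{\dia,j}_\b\vp^{\dia,k}_\b]'\,dt$ (after one integration by parts to pass from the antisymmetric combination to $2\int\c\c(\vp\vp)'$, the boundary terms vanishing automatically since $\vp(0)=0$ and $\c(1)=0$). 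There is no analogous telescoping for $\int_0^1\c\c\cdot\vp\vp\,dt$: the scalar Lagrange identity you quote controls $\int y_1y_2\,dt$ for two solutions, but the quantity in front of you is a product of four solutions, and applying the identity "separately to the pairs'' does not produce a closed formula for that integral. This is why the paper's $\wt{\cV}^{(jk)}$ is a family of derivatives, which also makes the zero-mean condition $\wt{\cV}^{(jk)}\ss\cP^0\cL^2(0,1)$ automatic (the antiderivative vanishes at $t=0$ since $\vp(0)=0$, and at $t=1$ because $\l_\b^\dia\in\s(v_{jj}^\dia)\cup\s(v_{kk}^\dia)$ kills at least one factor) rather than requiring an ad hoc constant shift.

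Two further points. First, the case $\l_\a^\dia\in\s(v_{jj}^\dia)\setminus\s(v_{kk}^\dia)$ needs no Green's-function replacement: with the derivative form the general identity still reads
\[
\bigl\langle\c^{\dia,j}_\a\c^{\dia,k}_\a,[\vp^{\dia,j}_\b\vp^{\dia,k}_\b]'\bigr\rangle
=\frac{[\vp^{\dia,j}_\b\vp^{\dia,k}_\b](1)-[\c^{\dia,j}_\a\c^{\dia,k}_\a](0)}{2(\l_\a-\l_\b)}\quad(\l_\a\ne\l_\b),
\]
and both numerator terms already vanish whenever $\l_\a,\l_\b$ lie in the union of the two scalar spectra, so off-diagonal orthogonality is for free. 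Second, the non-vanishing of the diagonal pairings at small indices does not come from the extra normalization $\c'(0,\l_\a^\dia,v_{jj}^\dia)\ne 0$ imposed in Step~1 (that condition is needed so that $A_\a^{22}(V^\dia)$ is invertible and the map $\wt\F$ is well-defined); it comes from elementary facts: for $\l_\a\in\s(v_{jj}^\dia)\setminus\s(v_{kk}^\dia)$ the pairing reduces to $\tfrac12\{\c^{\dia,k}_\a,\vp^{\dia,k}_\a\}\int_0^1\c^{\dia,j}_\a\vp^{\dia,j}_\a\ne 0$ (linear independence on the $k$-side, proportional eigenfunctions on the $j$-side), and for $\l_\a\in\s(v_{jj}^\dia)\cap\s(v_{kk}^\dia)$ it reduces to $\tfrac12[\dot\vp^{\dia,j}_\a\dot\vp^{\dia,k}_\a](1)$ and $-\tfrac12[\dot\c^{\dia,j}_\a\dot\c^{\dia,k}_\a](0)$, nonzero by simplicity of the scalar eigenvalues. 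Finally, the paper treats $j=k$ and $j\ne k$ in a single unified computation rather than deferring the diagonal case to \cite{PT}; this is harmless, but the blunt $j\ne k$ substep as written would not go through.
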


\begin{proof}
Taking into account definitions (\ref{UaJKdef1}), (\ref{UaJKdef2}) and (\ref{XiDef}), it is
sufficient to construct some collection $\wt{\cV}^{(jk)}\ss \cP^0 \cL^2(0,1)$ which is
biorthogonal to $\cP^0\wt{\cU}^{(jk)}$, where
\[
\begin{array}{rcl}
\vphantom{|_\big|}\wt{\cU}^{(jk)} & = &\left\{\c^{\dia,j}_\a\c^{\dia,k}_\a,\ \mathrm{for\
all}\ \l_\a^\dia\in\s(v_{jj}^\dia)\cup\s(v_{kk}^\dia)\right\}\cr \vphantom{|^\big|}&\cup &
\left\{\dot\c^{\dia,j}_\a\c^{\dia,k}_\a+\c^{\dia,j}_\a\dot\c^{\dia,k}_\a, \ \mathrm{for\ all}\
\l_\a^\dia\in \s(v_{jj}^\dia)\cap\s(v_{kk}^\dia)\right\},
\end{array}
\]
since $\wt{\cU}^{(jk)}$ and ${\cU}^{(jk)}$ are related by some simple linear transformations
(namely, multiplications by fixed constants, $(\c,\x\!=\!\dot{\c}\!+\!c\c)\leftrightarrow
(\c,\dot{\c})$ and $(u_1,u_2)\leftrightarrow (u_1\!+\!u_2,u_1\!-\!u_2)$). Note that we
consider both cases $k=j$ and $k<j$ simultaneously. Let
\[
\begin{array}{rcl}
\vphantom{|_\big|}\wt{\cV}^{(jk)} & = &\left\{[\vp^{\dia,j}_\b\vp^{\dia,k}_\b]',\ \mathrm{for\
all}\ \l_\b^\dia\in\s(v_{jj}^\dia)\cup\s(v_{kk}^\dia)\right\}\cr \vphantom{|^\big|}&\cup &
\left\{[\dot\vp^{\dia,j}_\b\vp^{\dia,k}_\b+\vp^{\dia,j}_\b\dot\vp^{\dia,k}_\b]', \
\mathrm{for\ all}\ \l_\b^\dia\in \s(v_{jj}^\dia)\cap\s(v_{kk}^\dia)\right\},
\end{array}
\]
By definition, $\wt{\cV}^{(jk)}\ss \cP^0 \cL^2(0,1)$. Let $\l_\a\ne\l_\b$ and
$\{\c,\vp\}=\c\vp'-\c'\vp$. The standard trick (e.g., see \cite{PT} pp. 44--45 for the similar
calculation in the scalar case ) shows
\[
\left\langle \c^{\dia,j}_\a\c^{\dia,k}_\a,[\vp^{\dia,j}_\b\vp^{\dia,k}_\b]'\right\rangle =
\frac{1}{2}\int_0^1 \left[(\c^{\dia,j}_\a\c^{\dia,k}_\a)(\vp^{\dia,j}_\b\vp^{\dia,k}_\b)'-
(\c^{\dia,j}_\a\c^{\dia,k}_\a)'(\vp^{\dia,j}_\b\vp^{\dia,k}_\b)\right]\!(t)dt
\]
\begin{equation}
\label{xCCFF} =
\frac{1}{2}\int_0^1\left[\{\c^{\dia,j}_\a,\vp^{\dia,j}_\b\}(\c^{\dia,k}_\a\vp^{\dia,k}_\b) +
(\c^{\dia,j}_\a\vp^{\dia,j}_\b)\{\c^{\dia,k}_\a,\vp^{\dia,k}_\b\}\right]\!(t)dt
\end{equation}
\[ =
\frac{\{\c^{\dia,j}_\a,\vp^{\dia,j}_\b\}\{\c^{\dia,k}_\a,\vp^{\dia,k}_\b\}\big|_0^1}{2(\l_\a\!-\!\l_\b)}
= \frac{[\vp^{\dia,j}_\b\vp^{\dia,k}_\b](1)-
[\c^{\dia,j}_\a\c^{\dia,k}_\a](0)}{2(\l_\a\!-\!\l_\b)}.\phantom{[}
\]

If both $\l_\a,\l_\b\in \s(v_{jj}^\dia)\cup\s(v_{kk}^\dia)$, then
$\vp^{\dia,j}_\b(1)\vp^{\dia,k}_\b(1)=\c^{\dia,j}_\a(0)\c^{\dia,k}_\a(0)=0$. Hence,
\[
\left\langle \c^{\dia,j}_\a\c^{\dia,k}_\a,[\vp^{\dia,j}_\b\vp^{\dia,k}_\b]'\right\rangle =0
\]
Moreover, if $\l_\a\in\s(v_{jj}^\dia)\cap\s(v_{kk}^\dia)$ (the case
$\l_\b\in\s(v_{jj}^\dia)\cap\s(v_{kk}^\dia)$ is similar), then the right-hand side in
(\ref{xCCFF}), as a function of $\l_\a$, has a double zero, so we can differentiate this
identity (with respect to $\l_\a$) and obtain
\[
\left\langle
\dot\c^{\dia,j}_\a\c^{\dia,k}_\a\!+\!\c^{\dia,j}_\a\dot\c^{\dia,k}_\a,[\vp^{\dia,j}_\b\vp^{\dia,k}_\b]'\right\rangle
=0.
\]
Also, if both $\l_\a,\l_\b\in\s(v_{jj}^\dia)\cap\s(v_{kk}^\dia)$, then
\[
\left\langle \dot\c^{\dia,j}_\a\c^{\dia,k}_\a\!+\!\c^{\dia,j}_\a\dot\c^{\dia,k}_\a,
[\dot\vp^{\dia,j}_\b\vp^{\dia,k}_\b\!+\!\vp^{\dia,j}_\b\dot\vp^{\dia,k}_\b]'\right\rangle =0.
\]

Let $\l_\a\!=\!\l_\b\in\s(v_{jj}^\dia)\setminus\s(v_{kk}^\dia)$ (or
$\l_\a\!=\!\l_\b\in\s(v_{kk}^\dia)\setminus\s(v_{jj}^\dia)$). Then
$\{\c^{\dia,j}_\a,\vp^{\dia,j}_\a\}=0$, $\{\c^{\dia,k}_\a,\vp^{\dia,k}_\a\}\ne 0$ and
\[
\left\langle \c^{\dia,j}_\a\c^{\dia,k}_\a\,, [\vp^{\dia,j}_\a\vp^{\dia,k}_\a]'\right\rangle =
\frac{\{\c^{\dia,k}_\a,\vp^{\dia,k}_\a\}}{2}\int_0^1\c^{\dia,j}_\a(t)\vp^{\dia,j}_\a(t)dt\ne
0.
\]
Let $\l_\a\!=\!\l_\b\in \s(v_{jj}^\dia)\cap\s(v_{kk}^\dia)$. Then
$\{\c^{\dia,j}_\a,\vp^{\dia,j}_\a\}=\{\c^{\dia,k}_\a,\vp^{\dia,k}_\a\}=0$ and
\[
\left\langle \c^{\dia,j}_\a\c^{\dia,k}_\a\,,[\vp^{\dia,j}_\a\vp^{\dia,k}_\a]'\right\rangle =
0.
\]
Using (\ref{xCCFF}) for $\l_\b\to\l_\a$, one gets
\[
\left\langle \dot\c^{\dia,j}_\a\c^{\dia,k}_\a\!+\!\c^{\dia,j}_\a\dot\c^{\dia,k}_\a\,,\,
[\vp^{\dia,j}_\a\vp^{\dia,k}_\a]'\right\rangle = \lim_{\l_\b\to \l_\a}
\frac{\vp^{\dia,j}_\b(1)\vp^{\dia,k}_\b(1)}{2(\l_\a\!-\!\l_\b)^2}=
\frac{[\dot\vp^{\dia,j}_\a\dot\vp^{\dia,k}_\a](1)}{2}\ne 0.
\]
Similarly,
\[
\left\langle \c^{\dia,j}_\a\c^{\dia,k}_\a\,,[\dot\vp^{\dia,j}_\a\vp^{\dia,k}_\a +
\vp^{\dia,j}_\a\dot\vp^{\dia,k}_\a]'\right\rangle = -
\frac{[\dot\c^{\dia,j}_\a\dot\c^{\dia,k}_\a](0)}{2}\ne 0.
\]
Finally, one needs to correct $\wt{\cV}^{(jk)}$ slightly, replacing the functions
$[\dot\vp^{\dia,j}_\a\vp^{\dia,k}_\a + \vp^{\dia,j}_\a\dot\vp^{\dia,k}_\a]'$ for all
$\l_\a\in\s(v_{jj}^\dia)\cap\s(v_{kk}^\dia)$) by
\[
[\dot\vp^{\dia,j}_\a\vp^{\dia,k}_\a + \vp^{\dia,j}_\a\dot\vp^{\dia,k}_\a]'+
c_\a[\vp^{\dia,j}_\a\vp^{\dia,k}_\a]'
\]
with appropriate constants $c_\a$, in order to guarantee
\[
\left\langle
\dot\c^{\dia,j}_\a\c^{\dia,k}_\a\!+\!\c^{\dia,j}_\a\dot\c^{\dia,k}_\a\,,\,[\dot\vp^{\dia,j}_\a\vp^{\dia,k}_\a
+ \vp^{\dia,j}_\a\dot\vp^{\dia,k}_\a]'+ c_\a[\vp^{\dia,j}_\a\vp^{\dia,k}_\a]'\right\rangle = 0
\]
After these corrections, $\wt{\cV}^{(jk)}$ becomes biorthogonal to $\wt{\cU}^{(jk)}$.
\end{proof}

\begin{proposition}
\label{PropRiesz} $\cP^0\cU^{(jk)}$ is a Riesz basis of $\cP^0\cL^2(0,1)$ for all
$1\!\le\!k\!\le\!j\!\le N$.
\end{proposition}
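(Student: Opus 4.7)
\emph{Strategy.} By Lemma~\ref{BiortLemma} each system $\cP^0\cU^{(jk)}$ is minimal in $\cP^0\cL^2(0,1)$, so I would conclude by invoking Bari's theorem: a minimal system quadratically close to a Riesz basis of a Hilbert space is itself a Riesz basis. The task therefore reduces to exhibiting a Riesz basis of $\cP^0\cL^2(0,1)$ to which $\cP^0\cU^{(jk)}$ is $\ell^2$-close in $\cL^2$-norm.

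\emph{Reference system.} As reference I would take the analogous system built from the diagonal \emph{constant} potential $V^0=\diag\{v_1^0,\dots,v_N^0\}$, for which everything is explicit: the spectra are $\{\pi^2n^2+v_j^0\}_{n\ge1}$ (all distinct since the $v_j^0$ are) and the fundamental solutions are $\c(t,\l,v_j^0)=z_j^{-1}\sin z_j(1\!-\!t)$ with $z_j=\sqrt{\l-v_j^0}$. A short calculation using (\ref{UaJKdef1}), (\ref{UaJKdef2}) yields closed-form trigonometric expressions for the reference analogues of the $u^{(jk)}_\a$ and $\wt u^{(jk)}_\a$; after projection by $\cP^0$ the dominant terms reduce to $-\cos2\pi nt$ and $-(1\!-\!t)\sin2\pi nt$, up to bounded nonzero factors involving only $v_k^0-v_j^0$ in the off-diagonal case.

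\emph{Quadratic closeness.} The closeness of $\cP^0\cU^{(jk)}$ (at $V^\dia$) to the reference system (at $V^0$) should follow from two ingredients: (a) the $\ell^2$ bounds $\l_{n,j}^\dia-\pi^2n^2-v_j^0\in\ell^2$ and $g_n^{-1}(v_{jj}^\dia)=2\pi^2n^2$ for $n\ge n^\dia$, built into our construction of $v_{jj}^\dia$ in Step~1 of Sect.~\ref{SectGeneralStrategy}; and (b) the standard perturbation estimate $\c(t,\l,v_{jj}^\dia)-\c(t,\l,v_j^0)=O(|z|^{-1}e^{|\Im z|(1\!-\!t)})$ together with analogous bounds for $\dot\c$ and $\ddot\c(0)$, combined with the uniform non-degeneracy of the normalizing scalars $\c_\a^{\dia,j}(0)$ and $(\c_\a^{\dia,j})'(0)$ from Step~1 and Lemma~\ref{xABwell-def}. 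Summing these scale-for-scale using Definition~\ref{cUdef} (where the factors $2\pi^2n^2$, $(2\pi n)^{-1}$, $\tfrac12$ and $\pi n$ are precisely the right ones to keep the entries bounded in $\cL^2$) gives the required quadratic closeness.

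\emph{The reference is a Riesz basis; main obstacle.} For the diagonal case $j=k$ the reference system is, up to a bounded invertible scaling, the classical Trubowitz gradient system for the scalar operator with constant potential $v_j^0$, which is known to be a Riesz basis of $\cP^0\cL^2(0,1)$ (equivalent to a direct Fourier calculation in $\{\cos2\pi nt,\sin2\pi nt\}$). The main obstacle is the off-diagonal case $j\ne k$: here the combinations $\tfrac12[u_{n,k}^{(jk)}-u_{n,j}^{(kj)}]$ and $\pi n[u_{n,k}^{(jk)}+u_{n,j}^{(kj)}]$ in Definition~\ref{cUdef} are tuned exactly so that the leading-order cancellations produce, respectively, $(v_k^0-v_j^0)^{-1}(1\!-\!\cos2\pi nt)$ and $-(1\!-\!t)\sin2\pi nt$, with the denominator $v_k^0-v_j^0$ nonzero by our hypothesis. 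To show the resulting system forms a Riesz basis of $\cP^0\cL^2(0,1)$ I would proceed either by an explicit Gram-matrix computation in the $\{\cos2\pi nt,\sin2\pi nt\}$-basis or by realizing it as the image of the trigonometric Riesz basis under a bounded invertible linear transformation. The $2(n^\dia\!-\!1)$ small-index functions ($\a\le\a^\dia$) constitute a finite-rank modification that is harmless once biorthogonality (Lemma~\ref{BiortLemma}) is in hand.
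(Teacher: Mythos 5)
Your proposal is correct and follows the same core logic as the paper: establish minimality via Lemma~\ref{BiortLemma}, show that $\cP^0\cU^{(jk)}$ is quadratically close to an explicit Riesz basis, and conclude by a Bari/Fredholm argument. The one structural difference is the intermediate reference at the constant potential $V^0=\diag\{v_1^0,\dots,v_N^0\}$. The paper instead compares $\cP^0\cU^{(jk)}$ directly to the trigonometric collection $\cR=\{\cos 2\pi nt\,;\,\cP^0[(1\!-\!t)\sin 2\pi nt]\}_{n\ge 1}$ and proves that $\cR$ is a Riesz basis via a bounded invertible triangular transformation of $\{\cos 2\pi nt,\sin 2\pi nt\}$ --- precisely the device you sketch as one of your two options for the off-diagonal case. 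The detour through $V^0$ does not actually save work: off-diagonal, the $V^0$-functions are built on $\sqrt{\pi^2n^2+v_j^0-v_k^0}$ rather than $\pi n$, so $O(1/n)$ corrections remain and the comparison to $\cR$ must still be performed. Two smaller points to watch: your parenthetical that the $V^0$-spectra are ``all distinct since the $v_j^0$ are'' is false in general (distinctness of the $v_j^0$ does not preclude $\pi^2n^2+v_j^0=\pi^2m^2+v_k^0$), so the low-index structure of the $V^0$-reference need not match that of $\cU^{(jk)}$ at $V^\dia$; and minimality of the $V^0$-reference is not supplied by Lemma~\ref{BiortLemma} as stated and would need its own (easy) argument. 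None of this is fatal, but it is avoided entirely by comparing directly to $\cR$, as the paper does, after noting that both $\cP^0\cU^{(jk)}$ and $\cR$ contain exactly $2(n^\dia\!-\!1)$ functions with $n<n^\dia$.
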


\begin{proof} Since $\cP^0\cU^{(jk)}$ admits the biorthogonal system, it is sufficient to
check that elements of $\cP^0\cU^{(jk)}$ are asymptotically close (say, in $\ell^2$--sense) to
some unperturbed Riesz basis (note that these functions are in one-to-one correspondence with
eigenvalues of $v_{jj}^\dia$ and $v_{kk}^\dia$, and we have two functions in $\cU^{(jk)}$ for
common eigenvalues). Those $u\in\cU^{(jk)}$ that correspond to first eigenvalues
$\l_{n,j}^\dia$, $\l_{n,k}^\dia$, $n<n^\dia$, do not affect the asymptotical behavior, so it
is sufficient to consider $n\ge n^0$.

We need some simple asymptotics. Let $\l=\pi^2n^2\!+\!\m$, $\m=O(1)$, and $v\in \cL^2(0,1)$ be
some (scalar) potential. Then
\[
\c(t,\l,v)=\frac{\sin\pi n (1\!-\!t)}{\pi n} + O\lt(\frac{1}{n^2}\rt),\qquad \dot\c(t,\l,v)=
\frac{(1\!-\!t)\cos \pi n(1\!-\!t)}{2\pi^2n^2} + O\lt(\frac{1}{n^3}\rt),
\]
\[
\c'(0,\l,v)=(-1)^{n-1}+O\lt(\frac{1}{n}\rt),\quad\dot\c(0,\l,v)= \frac{(-1)^n}{2\pi^2n^2} +
O\lt(\frac{1}{n^3}\rt),\quad \ddot\c(0,\l,v)=O\lt(\frac{1}{n^4}\rt)
\]
as $n\to\iy$. In particular,
\[
\x(t,\l,v)= \dot\c(t,\l,v)- \frac{\ddot\c(0,\l,v)}{2\dot \c(0,\l,v)}\,\c(t,\l,v)=
\frac{(1\!-\!t)\cos \pi n(1\!-\!t)}{2\pi^2n^2} + O\lt(\frac{1}{n^3}\rt).
\]

If $k=j$, one obtains
\[
\cP^0\lt[2\pi^2n^2\cdot u_{n,j}^{(jj)}\rt] =
\cP^0\lt[\frac{2\pi^2n^2[\c^{\dia,j}_{n,j}(t)]^2}{[(\c^{\dia,j}_{n,j})'(0)]^2}\rt] = - \cos
2\pi n t + O\lt(\frac{1}{n}\rt)
\]
and
\[
\cP^0\lt[(2\pi n)^{-1}\!\cdot \wt{u}_{n,j}^{(jj)}\rt] = \cP^0\lt[
\frac{\xi^{\dia,j}_{n,j}(t)\c^{\dia,j}_{n,j}(t)}{\pi n [\dot\c^{\dia,j}_{n,j}(0)]^2}\rt] =
-\cP^0\left[\vphantom{\big|}(1\!-\!t)\sin 2\pi nt\right] + O\lt(\frac{1}{n}\rt).
\]
It's easy to see that the collection
\begin{equation}
\label{RieszUnpert} \cR=\left\{\vphantom{\Big|}\cos 2\pi n t\ ,\ \cP^0[(1\!-\!t)\sin 2\pi
nt],\ n\ge 1\right\}
\end{equation}
is a Riesz basis of $\cP^0 \cL^2(0,1)$. Indeed, all functions $(\frac{1}{2}\!-\!t)\sin 2\pi
nt$, $n\ge 1$, are linear combinations of $\cos 2\pi mt$, $m\ge 1$, since they are symmetric
with respect to $\frac{1}{2}$. Hence,
\[
\left(\begin{array}{c} \langle f,\cos 2\pi nt\rangle_{n=1}^{+\iy}\vphantom{|_\big|} \cr
\langle f,\cP^0[(1\!-\!t)\sin 2\pi nt]\rangle_{n=1}^{+\iy}
\end{array}\right) = \left(\begin{array}{cc} I\vphantom{|_\big|} &\ 0 \cr \cA &\ \frac{1}{2}I\end{array}\right)
\left(\begin{array}{c} \langle f,\cos 2\pi nt\rangle_{n=1}^{+\iy} \vphantom{|_\big|}\cr
\langle f,\sin 2\pi nt\rangle_{n=1}^{+\iy} \end{array}\right).
\]
and the linear operator $%\[ \cA:
\langle f, \cos 2\pi nt\rangle_{n=1}^{+\infty}\mapsto \langle f,
\cP^0[({\textstyle\frac{1}{2}}\!-\!t)\sin 2\pi nt]\rangle_{n=1}^{+\infty}$, %\quad
$f\in\cL^2(0,1)$, is bounded in $\ell^2$, since the operator $f\mapsto (\frac{1}{2}\!-\!t)f$
is bounded in $\cL^2(0,1)$.

Thus, \mbox{$\cR$ is} a Riesz basis of $\cP^0\cL^2(0,1)$ and $\cP^0\cU^{(jj)}$ is
$\ell^2$--close to $\cR$ (note that in both $\cP^0\cU^{(jj)}$ and $\cR$ there are exactly
$2(n^\dia\!-\!1)$ functions with $n<n^\dia$). Due to Lemma~\ref{BiortLemma}, the elements of
$\cP^0\cU^{(jj)}$ are linearly independent. Therefore, $\cP^0\cU^{(jj)}$ is a Riesz basis of
$\cP^0\cL^2(0,1)$ by the Fredholm Alternative (see, e.g., \cite{PT} p.~163).

\smallskip

Let $k<j$ and $n\ge n^\dia$. Due to
$[(\c^{\dia,j}_{n,k})'(\dot\c^{\dia,j}_{n,k})^{-1}](0)\!=\! -(g_{n,k}^\dia)^{-1} \!=\!
-2\pi^2n^2$, one has
\[
u_{n,k}^{(jk)}(t)=
-\frac{\c^{\dia,j}_{n,k}(t)\c^{\dia,k}_{n,k}(t)}{\c^{\dia,j}_{n,k}(0)(\c^{\dia,k}_{n,k})'(0)}
=
\frac{\c^{\dia,j}_{n,k}(t)\c^{\dia,k}_{n,k}(t)}{2\pi^2n^2\c^{\dia,j}_{n,k}(0)\dot\c^{\dia,k}_{n,k}(0)}\,.
\]
Note that
\[
\c^{\dia,j}_{n,k}(t)=\c^{\dia,j}_{n,j}(t)+
(\l^\dia_{n,k}\!-\!\l^\dia_{n,j})\dot\c^{\dia,j}_{n,j}(t) + O(n^{-3})
\]
and
\[
\c^{\dia,j}_{n,k}(0) = (\l_{n,k}^\dia\!-\!\l_{n,j}^\dia)\cdot
\dot\c^{\dia,j}_{n,j}(0)+O(n^{-4}),
\]
since $\c^{\dia,j}_{n,j}(0)=0$ and $\ddot\c^{\dia,j}_{n,j}(0)=O(n^{-4})$. Therefore,
\[
u_{n,k}^{(jk)}(t)= \frac{1}{\l_{n,k}^\dia\!-\!\l_{n,j}^\dia}\cdot
\frac{\c^{\dia,j}_{n,j}(t)\c^{\dia,k}_{n,k}(t)}
{2\pi^2n^2\dot\c^{\dia,j}_{n,j}(0)\dot\c^{\dia,k}_{n,k}(0)}
+\frac{\dot\c^{\dia,j}_{n,j}(t)\c^{\dia,k}_{n,k}(t)}
{2\pi^2n^2\dot\c^{\dia,j}_{n,j}(0)\dot\c^{\dia,k}_{n,k}(0)}+O\lt(\frac{1}{n^2}\rt).
\]
Thus,
\[
\cP^0\lt[{\textstyle\frac{1}{2}}[u_{n,k}^{(jk)}-u_{n,j}^{(kj)}]\rt]= -\frac{\cos 2\pi
nt}{\l_{n,k}^\dia\!-\!\l_{n,j}^\dia}+ O\lt(\frac{1}{n}\rt)
\]
and, since the first term of $u_{n,k}^{(jk)}(t)$ is antisymmetric with respect to $j$ and $k$,
\[
\cP^0\lt[\pi n\cdot [u_{n,k}^{(jk)}+u_{n,j}^{(kj)}]\rt]=
-\cP^0\left[\vphantom{\big|}(1\!-\!t)\sin 2\pi nt\right] +O\lt(\frac{1}{n}\rt).
\]
As above, we see that $\cP^0\cU^{(jk)}$ (up to some uniformly bounded multiplicative
constants) is $\ell^2$--close to the Riesz basis $\cR$ given by (\ref{RieszUnpert}). So,
$\cP^0\cU^{(jk)}$ is a Riesz basis due to the Fredholm Alternative and Lemma \ref{BiortLemma}.
\end{proof}

\begin{corollary}
The Fr\'echet derivative
\[
d_{V^\dia}\F = \left(d_{V^\dia}\F^{(1)}\,;\, d_{V^\dia}\F^{(2)}\right): \cP^0
\cL^2([0,1];\C^{N\ts N}_\R) \to \cH^{(1)}_\R\oplus \cH^{(2)}_\R,
\]
\[
\cH^{(1)}_\R = \Oplus_{\a=1}^{\a^\dia}\left[\C^{k^\dia_\a\!\ts k^\dia_\a}_\R\!\oplus
\C^{k^\dia_\a\!\ts k^\dia_\a}_\R\!\oplus \C^{(N-k^\dia_\a)\ts
k^\dia_\a}_{\phantom\R}\right]\!,\qquad \cH^{(2)}_\R =
\ell^2_\R\left(\,\N_{n^\dia}\,;\,\C^{N\ts N}_\R\oplus\C^{N\ts N}_\R \right)\!,
\]
is a linear isomorphism (in other words, $d_{V^\dia}\F$ is invertible).
\end{corollary}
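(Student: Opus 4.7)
The plan is to exploit a block-diagonal structure of $d_{V^\dia}\F$ that is already visible in the explicit formulas of Section~\ref{SectFrechetDer2}, and then invoke Proposition~\ref{PropRiesz} on each block. The substantive content — the Riesz-basis property of $\cP^0\cU^{(jk)}$ and the existence of biorthogonals — has already been established in the preceding subsections, so what remains is essentially an assembly argument; no new analytic estimates are needed.

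First I would decompose an arbitrary $W=W^*\in\cP^0\cL^2([0,1];\C^{N\ts N}_\R)$ into its scalar entries: diagonal entries $W_{jj}\in\cP^0\cL^2(0,1;\R)$ and strictly upper-triangular entries $W_{jk}\in\cP^0\cL^2(0,1;\C)$ for $k<j$, subject to $W_{kj}=\ol{W_{jk}}$. Inspecting items~(1)--(2) of the list immediately following Remark~\ref{wtFanaliticity}, one sees that each coordinate of $(d_{V^\dia}\F)W$ is, up to a fixed nonzero scalar, either a pairing $\langle W_{jk},u\rangle$ or its complex conjugate $\langle\ol{W_{jk}},u\rangle$ with some $u$ in the collection $\cU^{(jk)}$ from Definition~\ref{cUdef}. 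Consequently $d_{V^\dia}\F$ splits as the orthogonal direct sum
\[
d_{V^\dia}\F \ =\ \Oplus_{j=1}^N T^{(jj)}\ \oplus\ \Oplus_{1\le k<j\le N} T^{(jk)},
\]
where $T^{(jj)}:\cP^0\cL^2(0,1;\R)\to\ell^2_\R$ is the analysis map $f\mapsto(\langle f,u\rangle)_{u\in\cU^{(jj)}}$, and $T^{(jk)}:\cP^0\cL^2(0,1;\C)\to\ell^2_\C$ is the analogous complex analysis map for $W_{jk}$ against $\cU^{(jk)}$. Since every function in $\cU^{(jk)}$ is real-valued, the conjugate coordinates $\langle\ol{W_{jk}},u\rangle=\overline{\langle W_{jk},u\rangle}$ listed in items~2(a)--2(c) are already determined by $T^{(jk)}$ and so carry no new information; this matches the fact that the self-adjointness constraint $W_{kj}=\ol{W_{jk}}$ cuts the raw parameter count of $W_{jk}$ in half in exactly the corresponding way.

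The second and final step is Proposition~\ref{PropRiesz}: each $\cP^0\cU^{(jk)}$ is a Riesz basis of $\cP^0\cL^2(0,1)$, and because the elements of $\cU^{(jk)}$ are real-valued (they are products of real scalar Sturm--Liouville solutions for the diagonal potentials $v^\dia_{jj}$ and $v^\dia_{kk}$), this Riesz-basis property transfers verbatim from $\cP^0\cL^2(0,1;\R)$ to its complexification. Combined with the biorthogonal system constructed in Lemma~\ref{BiortLemma}, this makes each block $T^{(jj)}$ and $T^{(jk)}$ a bounded linear isomorphism onto its $\ell^2$-type target, and taking the direct sum over the finitely many matrix positions $(j,k)$ with $k\le j$ yields the invertibility of $d_{V^\dia}\F$. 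I expect the only real obstacle to be the linear-algebra bookkeeping in the first step — matching each listed coordinate of $(d_{V^\dia}\F)W$ with the corresponding element of $\cU^{(jk)}$ and confirming that no independent information is hidden in the complex-conjugate coordinates — which is routine but slightly tedious.
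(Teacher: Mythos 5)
Your proof is correct and takes essentially the same route as the paper: the paper's own proof is the one-line "See Remark~\ref{RemarkRiesz} and Proposition~\ref{PropRiesz}," and your write-up is precisely an unpacking of Remark~\ref{RemarkRiesz} (the block decomposition of $d_{V^\dia}\F$ into entry-wise analysis maps against the collections $\cU^{(jk)}$) followed by an invocation of Proposition~\ref{PropRiesz}. The only tiny imprecision is the claim that all conjugate coordinates in items 2(a)--2(c) carry no new information: in case 2(b) the coordinate $\langle\ol{W_{jk}},u_\a^{(kj)}\rangle$ is not the conjugate of any other listed coordinate but rather a genuinely independent one, which is exactly why Definition~\ref{cUdef} puts both $u_\a^{(jk)}$ and $u_\a^{(kj)}$ (for the appropriate disjoint index sets) into $\cU^{(jk)}$ — but this does not affect your conclusion that $T^{(jk)}$ captures all the listed coordinates.
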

\begin{proof}
See Remark \ref{RemarkRiesz} and Proposition \ref{PropRiesz}.
\end{proof}

\subsection{Completion of the proof. Changing of the finite number of first residues.}
\label{SectChangingFirst}

Let $\{(\l_\a^\dia,P_\a^\dag,g_\a^\dag)\}_{\a\ge 1}$ be some data which satisfy conditions
(A)--(C) in Theorem \ref{MainThm}) and $B_\a^\dag=P_\a^\dag (g_\a^\dag)^{-1} P_\a^\dag$.
Recall that $P_{n,j}^\dag\!=\!P_j^0+\ell^2$ and
$(g^\dag_{n,j})^{-1}\!=\!2\pi^2n^2(1+\ell^2_1)$. Similarly to Definition \ref{aceYUSdef}, if
$n$ is sufficiently large, then we may introduce the (unique) factorization
\[
(2\pi^2n^2)^{-1}B_{n,j}^\dag = %(2\pi^2n^2)^{-1}(g_{n,j}^\dag)^{-1}P_{n,j}^\dag =
(c_{n,j}^\dag)^2\cdot e_{n,j}^\dag(e_{n,j}^\dag)^*,\quad c_{n,j}^\dag\in\R_+,\
e_{n,j}^\dag\in\C^N,\ \langle e_{n,j}^\dag,e_j^0\rangle =1.
\]
Note that $e_{n,j}^\dag=e_n^0+\ell^2$ and $c_{n,j}^\dag=1+\ell^2$. Define
\[
Y_n^\dag=\left(\begin{array}{ccccccc}c_{n,1}^\dag\cdot e_{n,1}^\dag &;& c_{n,2}^\dag\cdot
e_{n,2}^\dag &; & ... & ; & c_{n,N}^\dag\cdot e_{n,N}^\dag\end{array}\right) \in \C^{N\ts N}.
\]
Since $Y_n^\dag=I_N+\ell^2$, the matrix $Y_n^\dag$ is non-degenerate for all sufficiently
large $n$, and we may introduce its (unique) polar decomposition
\[
Y_n^\dag = U_n^\dag S_n^\dag,\qquad [U_n^\dag]^*=[U_n^\dag]^{-1},\ \ [S_n^\dag]^*=S_n^\dag>0.
\]
Note that $U_n^\dag=I_N+\ell^2$ and $S_n^\dag=I_N+\ell^2$. By our assumptions,
$\sum_{j=1}^NP_{n,j}^\dag=I_N+\ell^2_1$ and $(g_{n,j}^\dag)^{-1}=2\pi^2n^2(1+\ell^2_1)$, so
Lemma \ref{EquivAsymptB} gives
\[
\textstyle U_n^\dag (S_n^\dag)^2 (U_n^\dag)^*= Y_n^\dag(Y_n^\dag)^* =
(2\pi^2n^2)^{-1}\sum_{j=1}^N B_{n,j}^\dag= I_N + \ell^2_1.
\]
Therefore, $S_n^\dag=I_N+\ell^2_1$.

\smallskip

Recall that $U_n^\dia\!=\!U_n(V^\dia)\!=\!I_N$ and $S_n^\dia\!=\!S_n(V^\dia)\!=\!I_N$ for all
$n\ge n^\dia$, so $\F^{(2)}(V^\dia)\!=\!0$. Since the Fr\'echet derivative $d_{V^\dia}\F$ is
invertible, the mapping $\F=(\F^{(1)};\F^{(2)})$ is a local bijection near $V^\dia$.
Therefore, if $\a^\bullet$ is large enough, then there exists some potential $V^\bullet\in
\cB^0_\R(V^\dia,r^\dia)$ such that
\[
\F^{(1)}(V^\bullet)=\F^{(1)}(V^\dia),\quad \F^{(2)}_n(V^\bullet)=\F^{(2)}_n(V^\dia)=0\ \
\mathrm{for\ all}\ n^\dia\le n\le n^\bullet,
\]
\[
\mathrm{and}\qquad \F^{(2)}_n(V^\bullet)=\left(-i\log U^\dag_n\ ;\ 2\pi n \cdot
(S^\dag_n\!-\!I_N)\right)\quad \mathrm{for\ all}\ n\ge n^\bullet,
\]
where $\a^\bullet-\a^\dia = N(n^\bullet-n^\dia)$ (i.e., $\a^\bullet+1$ corresponds to the
double-index $(n^\bullet,1)$). Since the original mapping $\wt\F$ can be reconstructed from
$\F$, one has
\[
\wt{A}_\a(V^\bullet)=\wt{A}_\a(V^\dia)=0\quad \mathrm{for\ all}\ \a\le\a^\bullet,
\]
\[
\wt{A}_{n,j}(V^\bullet)=0\quad \mathrm{and}\quad \wt{B}_{n,j}(V^\bullet)=B_{n,j}^\dag \qquad
\mathrm{for\ all}\ n\ge n^\bullet.
\]
Due to Lemma \ref{ABselfadjoint}, it gives
\[
\s(V^\bullet)=\{\l_\a^\dia\}_{\a\ge 1}\qquad \mathrm{and}\qquad
B_{n,j}(V^\bullet)=B_{n,j}^\dag \ \ \mathrm{for\ all}\ n\ge n^\bullet.
\]

\smallskip

At last, we need to change the {\it finite} number of first residues
$(B_\a(V^\bullet))_{\a=1}^{\a^\bullet}$ to $(B_\a^\dag)_{\a=1}^{\a^\bullet}$. Recall that the
isospectral transforms constructed in \cite{CK} allow to modify each particular residue $B_\a$
in an almost arbitrary way. The only one restriction (concerning the change of projector
$P_\a$ to $\wt{P}_\a$) is
\[
\cF_\a\cap\Ran \wt{P}_\a = \{0\},
\]
where $\cF_\a$, $\dim\cF_\a=N\!-\!k_\a$ is some "forbidden" subspace that is uniquely
determined by the spectrum and all other subspaces $(\cE_\b)_{\b\ne\a}$. It's not hard to
conclude (see Proposition \ref{FaAnd(C)}) that this restriction is {\it equivalent} to the
following:
\begin{quotation}
One can modify $B_\a$ in an arbitrary way such that (C) holds true.
\end{quotation}
In general situation one can change all $B_\a(V^\bullet)$ to $B_\a^\dag$ by $\a^\bullet$
steps. Nevertheless, it may happen that at some intermediate step the desired residue
$B_\a^\dag$ violates (C). In order to overcome this difficulty note that one can always change
$B_\a$ to some $\wt{B}_\a^\dag$ which is arbitrary close to $B_\a^\dag$ in the natural
topology. Then, in any case, after $\a^\bullet$ steps one can obtain some potential
$\wt{V}^\bullet$ such that $B_\a(\wt{V}^\bullet)=\wt{B}_\a^\dag$  for all $\a=1,..,\a^\bullet$
(and, of course, $B_\a(\wt{V}^\bullet)=B_\a(V^\bullet)=B_\a^\dag$ for all $\a>\a^\bullet$). By
Corollary \ref{RemOpenSet}, the set of all admitted by (C) sequences
$(B_\a)_{\a=1}^{\a^\bullet}$ is open in the natural topology. Therefore, if
$(\wt{B}_\a^\dag)_{\a=1}^{\a^\bullet}$ and $(B_\a^\dag)_{\a=1}^{\a^\bullet}$ are close enough,
then all changes $\wt{B}_\a^\dag\mapsto B_\a^\dag$ are permitted. So, after another at most
$\a^\bullet$ steps one obtains the potential $V$ such that $B_\a(V)=B_\a^\dag$ for all
$\a=1,..,\a^\bullet$ (and still $B_\a(V)=B_\a^\dag$ for all $\a>\a^\bullet$). The proof is
finished. \qed

\smallskip

\renewcommand{\thesection}{A}
\section{Appendix. Property (C)}
\setcounter{equation}{0}
\renewcommand{\theequation}{A.\arabic{equation}}
\label{AppendixA}

Let $\l_\a>0$ for all $\a\ge 1$. Note that (C) doesn't depend on shifts of the spectrum, so we
do not lose the generality.  We begin with the following simple

\begin{remark}
If an entire function $\x$ is bounded on the real positive half-line, then the condition
$\x(\l)=O(e^{|\Im\sqrt\l|})$ is equivalent to say that $\x(z^2)$ is an entire function of
exponential type no greater than $1$ (see \cite{Ko}, p.28).
\end{remark}

Recall that the Paley-Wiener space $PW_{[-1,1]}$ consists of all entire functions $f(z)$ of
exponential type no greater than $1$ such that $f\in \cL^2(\R)$. The Paley-Wiener theorem (see
\cite{Ko} p.30) claims
\begin{equation}
\label{PW} f\in PW_{[-1,1]}\qquad\mathrm{iff}\qquad
f(z)=\frac{1}{2\pi}\int_{-1}^1\f(t)e^{-izt}dt,\quad \mathrm{where}\ \ \f\in \cL^2(-1,1).
\end{equation}

\begin{proof}[\bf Proof of Proposition \ref{Creform}] If $\f\in \cL^2([-1,1]\,;\C^N)$ is some vector-valued function
%orthogonal to all $e^{\pm i\sqrt{\l_\a}t}h_\a$ and $e_1^0,..,e_N^0$, then
such that
\begin{equation}
\label{xxOrt} \int_{-1}^1\phi(t)dt=0\qquad \mathrm{and}\qquad h_\a^*\int_{-1}^1\phi(t)e^{\pm
i\sqrt{\l_\a}t} dt =0\ \ \mathrm{for\ all}\ \a\ge 1,
\end{equation}
then
\[
\frac{1}{2\pi}\int_{-1}^1\f(t)e^{-izt}dt=zf(z)\quad \mathrm{and}\quad P_\a
f(\pm\sqrt{\l_\a})=0\ \ \mathrm{for\ all}\ \a\ge 1,
\]
where $zf(z)\in PW_{[-1,1]}$. Denote $\xi(z^2)=\frac{1}{2}[f(z)\!+\!f(-z)]$ or
$\xi(z^2)=\frac{1}{2z}[f(z)\!-\!f(-z)]$. Then, $P_\a\x(\l_\a)\!=\!0$, $\a\ge 1$,
$\x(\l)\!=\!O(e^{|\Im\sqrt\l|})$ and $\x\!\in\!\cL^2(\R_+)$. This contradicts to (C).

Conversely, let $\x(\l)\!=\!O(e^{|\Im\sqrt\l|})$ and $\x\in \cL^2(\R_+)$. Then
$f(z)\!=\!z\x(z^2)\in PW_{[-1,1]}$, so it admits representation (\ref{PW}) with some $\f\in
\cL^2(-1,1)$. It's easy to check that $P_\a\x(\l_\a)=0$ and $f(0)=0$ imply (\ref{xxOrt}).
Hence, $\f\equiv 0$.
\end{proof}

We have the immediate

\begin{corollary}
\label{RemOpenSet} If one fixes the spectrum $\{\l_\a\}_{\a\ge 1}$ and all projectors
$P_\a$,~$\a\ge\a^\bullet\!+\!1$, for some $\a^\bullet\ge 0$, then the set of all finite
sequences $(P_\a)_{\a=1}^{\a^\bullet}$ satisfying the condition (C) is open in the natural
topology.
\end{corollary}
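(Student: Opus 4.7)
The plan is to apply the reformulation of condition (C) given in Proposition \ref{Creform}, so that (C) becomes a spanning statement for a countable family of exponential vector-valued functions in $\cL^2([-1,1];\C^N)$. Openness then reduces to a continuity-plus-nonvanishing-determinant argument in a finite-dimensional quotient.

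Fix the spectrum $\{\l_\a\}_{\a\ge 1}$ and the projectors $P_\a$ for $\a\ge \a^\bullet+1$. I would let $\cS\ss \cL^2([-1,1];\C^N)$ denote the closure of the linear span of the constant vectors $e_1^0,\dots,e_N^0$ together with all $e^{\pm i\sqrt{\l_\a}t}h_\a^{(j)}$ for $\a\ge \a^\bullet+1$, $j=1,\dots,k_\a$. For each $\a=1,\dots,\a^\bullet$, set
\[
V_\a^\pm(P_\a)=\{\,e^{\pm i\sqrt{\l_\a}t}h : h\in \Ran P_\a\,\}\ss\cL^2([-1,1];\C^N),
\]
a $k_\a$-dimensional subspace. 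By Proposition \ref{Creform}, (C) holds at $(P_\a)_{\a=1}^{\a^\bullet}$ if and only if
\[
\cS+\sum_{\a=1}^{\a^\bullet}\bigl(V_\a^+(P_\a)+V_\a^-(P_\a)\bigr)=\cL^2([-1,1];\C^N).
\]

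Now suppose (C) holds at some $(P_\a^0)_{\a=1}^{\a^\bullet}$ and let $\cQ=\cL^2([-1,1];\C^N)/\cS$. Since the sum of finitely many $k_\a$-dimensional subspaces already generates $\cQ$ modulo $\cS$, we have $N_0=\dim\cQ\le 2\sum_{\a=1}^{\a^\bullet}k_\a<\iy$. I would then choose vectors $v_1^0,\dots,v_{N_0}^0$ from $\bigcup_\a(V_\a^+(P_\a^0)\cup V_\a^-(P_\a^0))$ whose images in $\cQ$ form a basis. For $(P_\a)$ close to $(P_\a^0)$ in the natural (operator-norm) topology, one can pick orthonormal bases of $\Ran P_\a$ continuously in $P_\a$; multiplying by $e^{\pm i\sqrt{\l_\a}t}$ yields vectors $v_1(P),\dots,v_{N_0}(P)$ depending continuously on $(P_\a)$ in $\cL^2([-1,1];\C^N)$, and hence in $\cQ$. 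Linear independence of $v_1(P),\dots,v_{N_0}(P)$ in the finite-dimensional space $\cQ$ is an open condition (nonvanishing of a continuous determinant), so it persists near $(P_\a^0)$. Thus their images still span $\cQ$, i.e., the spanning condition, and so (C), continues to hold on a neighborhood.

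The only technical point is the continuous selection of orthonormal bases of $\Ran P_\a$, which is standard for small perturbations of a projector of fixed rank $k_\a$ (one may, for instance, use $h_\a^{(j)}(P)=P_\a P_\a^0 h_\a^{(j),0}$ and Gram--Schmidt). This is not a genuine obstacle; the substance of the argument is the observation that, once (C) is known at one point, the quotient $\cQ$ is automatically finite-dimensional, which allows one to import the elementary openness of linear independence from finite-dimensional linear algebra.
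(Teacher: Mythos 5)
Your argument is correct and is precisely the proof the paper leaves implicit by calling the corollary ``immediate'' after Proposition~\ref{Creform}. The essential points you spell out---reformulating (C) as a spanning condition, observing that once (C) holds at one tuple the quotient by the closed subspace generated by the fixed tail and the constant vectors is finite-dimensional, and then invoking openness of linear independence (continuity of a determinant) as the finitely many projectors vary---are exactly what make the conclusion immediate, so this is the same approach with the details written out.
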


Introduce the function
\begin{equation}
\label{xibDef} \xi_\b(\l)\equiv\frac{\c(0,\l,V)P_\b^\sharp}{\l\!-\!\l_\b},
\end{equation}
where $P_\b^\sharp:\C^N\to\cE_\b^\sharp$ is the orthogonal projector onto the subspace
$\cE_\b^\sharp=\Ker\c(0,\l_\b,V)$.

\begin{proposition}
\label{xibProp} Let $\b\ge 1$ and $V\!=\!V^*\in \cL^2([0,1];\C^{N\ts N}_\R)$. Then,

\smallskip

\noindent (i) $\x_\b:\C\to\C^{N\ts N}$ is an entire matrix-valued function,
$\x_\b(\l)=O(e^{|\Im\sqrt\l|})$ as $|\l|\to\iy$, $\x_\b\in\cL^2(\R_+)$ and
$P_\a\x_\b(\l_\a)=0$ for all $\a\ne\b$.

\smallskip

\noindent (ii) If $\xi:\C\to \C^N$ is an entire vector-valued function such that
$\xi(\lambda)\!=\!O(e^{|\Im\sqrt\lambda|})$ as $|\lambda|\to\infty$, $\xi\in \cL^2(\R_+)$ and
$P_\a\xi(\lambda_\a)\!=\!0$ for all $\a\ne\b$, then $\xi=\xi_\b h$ for some $h\in\C^N$.
\end{proposition}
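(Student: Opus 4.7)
The plan for part (i) is to verify each of the four claims directly. The function $\c(0,\l,V)P_\b^\sharp$ vanishes at $\l=\l_\b$ by the very definition of $P_\b^\sharp$ as the orthogonal projector onto $\Ker\c(0,\l_\b,V)$, so the singularity of $(\l-\l_\b)^{-1}$ is removable and $\x_\b$ is entire. The exponential-type bound $\x_\b(\l)=O(e^{|\Im\sqrt\l|})$ and the $\cL^2(\R_+)$ estimate both follow from the asymptotic $\c(0,\l,V)=(\sqrt\l)^{-1}\sin\sqrt\l\cdot I_N+O(e^{|\Im\sqrt\l|}/|\l|)$ derived from~(\ref{xVpasympt}), combined with the extra $|\l|^{-1}$ decay from $(\l-\l_\b)^{-1}$. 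For the orthogonality $P_\a\x_\b(\l_\a)=0$ when $\a\ne\b$, I would use the Wronskian identity $\c(0,\l,V)=\vp^*(1,\ol\l,V)$ (the same one exploited in Section~\ref{DirectProblem}): this implies $\Ran\c(0,\l_\a,V)=(\Ker\vp(1,\l_\a,V))^\perp=\cE_\a^\perp$, so already $P_\a\c(0,\l_\a,V)=0$.

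For part (ii), the strategy is to mimic the argument at the end of Section~\ref{DirectProblem}, but applied to $(\l-\l_\b)\x(\l)$ in place of $\x(\l)$. The decisive observation is that the new function satisfies $P_\a[(\l-\l_\b)\x(\l)]|_{\l=\l_\a}=0$ for \emph{every} $\a\ge 1$: for $\a\ne\b$ this is the hypothesis on $\x$, and for $\a=\b$ the prefactor itself vanishes. Combined with the pole description
\[
[\c(0,\l,V)]^{-1}=(Z_\a^{-1}+O(\l-\l_\a))\bigl((\l-\l_\a)^{-1}P_\a+P_\a^\perp\bigr),\qquad\l\to\l_\a,
\]
from Lemma 2.2 of~\cite{CK}, this ensures that
\[
\m(\l):=[\c(0,\l,V)]^{-1}(\l-\l_\b)\x(\l)
\]
is entire.

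Next I would estimate $\m$ on the standard circles $|\l|=\pi^2(n+\frac{1}{2})^2$. There the asymptotics give $|[\c(0,\l,V)]^{-1}|=O(|\l|^{1/2}e^{-|\Im\sqrt\l|})$, while the hypothesis on $\x$ gives $(\l-\l_\b)\x(\l)=O(|\l|\,e^{|\Im\sqrt\l|})$, so $\m(\l)=O(|\l|^{3/2})$ along this expanding family of circles. A standard Cauchy-coefficient estimate then forces $\m$ to be a polynomial of degree at most one: $\m(\l)=\m_0+\m_1\l$. Rearranging yields $\x(\l)=\c(0,\l,V)(\m_0+\m_1\l)/(\l-\l_\b)$, whose leading behavior for large positive real $\l$ is $\m_1\cdot\sin\sqrt\l/\sqrt\l$; the divergence of $\int_1^{+\iy}|\sin\sqrt\l|^2/\l\,d\l$ together with $\x\in\cL^2(\R_+)$ forces $\m_1=0$. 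Evaluating the remaining identity $(\l-\l_\b)\x(\l)=\c(0,\l,V)\m_0$ at $\l=\l_\b$ gives $\m_0\in\Ker\c(0,\l_\b,V)$, whence $P_\b^\sharp\m_0=\m_0=:h$ and $\x=\x_\b h$.

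The main obstacle relative to the direct-part proof of (C) is precisely this extra polynomial growth: since the orthogonality hypothesis is one index short of cancelling every pole of $[\c(0,\l,V)]^{-1}$, the multiplier $(\l-\l_\b)$ raises the Liouville bound from degree zero to degree one, and the new linear coefficient must then be ruled out by a dedicated $\cL^2$-divergence argument.
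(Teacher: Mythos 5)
Your proof is correct and follows essentially the same route as the paper's: both hinge on Lemma~2.2 of \cite{CK}, the $O(|\l|^{1/2}e^{-|\Im\sqrt\l|})$ bound for $[\c(0,\l,V)]^{-1}$ on the circles $|\l|=\pi^2(n+\frac12)^2$, a Liouville-type conclusion, and a final $\cL^2(\R_+)$-divergence argument. The only (cosmetic) difference is in handling the remaining pole at $\l_\b$: the paper keeps $\o(\l)=[\c(0,\l,V)]^{-1}\x(\l)$ meromorphic, subtracts the polar part, and gets a constant $\o_0$, while you clear the pole by multiplying by $(\l-\l_\b)$, raising the Liouville bound to degree one and then killing the linear coefficient $\m_1$ (which plays exactly the role of $\o_0$) via the same $\cL^2$ divergence.
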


\begin{proof}
(i) The function $\x_\b$ is entire due to $\c(0,\l_\b,V)P_\b^\sharp=0$. Furthermore,
\[
\x_\b(\l)=O(|\l|^{-\frac{3}{2}}e^{|\Im\sqrt\l|})\ \ \mathrm{as}\
|\l|\to\infty\qquad\mathrm{and}\qquad P_\a \x_\b(\l_\a) = 0\ \ \mathrm{for\ all}\ \a\ne\b,
\]
since $P_\a\c(0,\l_\a)=P_\a[\vp(1,\l_\a)]^* = [\vp(1,\l_\a)P_\a]^*=0$.

\smallskip

\noindent (ii) Lemma 2.2 \cite{CK} claims
\[
[\c(0,\l,V)]^{-1}= [\vp^*(1,\ol{\l},V)]^{-1}=
(Z_\a^{-1}+O(\l\!-\!\l_\a))((\l\!-\!\l_\a)^{-1}P_\a + P_\a^\perp)\ \ \mathrm{as}\ \ \l\to\l_\a
\]
for some $Z_\a$, $\a\ne\b$, such that $\det Z_\a\ne 0$ and
\[
[\c(0,\l,V)]^{-1}= [\vp(1,\l,V^\sharp)]^{-1}= ((\l\!-\!\l_\b)^{-1}P_\b^\sharp +
(P_\b^\sharp)^\perp)(Z_\b^{-1}+O(\l\!-\!\l_\b))\ \ \mathrm{as}\ \ \l\to\l_\b
\]
for some $Z_\b$, $\det Z_\b\ne 0$. Due to $P_\a\x(\l_\a)=0$, $\a\ne\b$, the (vector-valued)
function
\[
\o(\l)=[\c(0,\l,V)]^{-1}\x(\l)
\]
is analytic except $\l_\b$ and $\o(\l)=(\l\!-\!\l_\b)^{-1}P_\b^\sharp h + O(1)$ as
$\l\to\l_\b$ for some $h\in\C^N$. Since $\o(\l)=O(|\l|^{1/2})$ as
$|\l|=\pi^2(n+\frac{1}{2})^2\to\iy$, the Liouville theorem gives
\[
\xi(\l) \equiv \c(0,\l,V)\left[ (\l\!-\!\l_\b)^{-1}P_\b^\sharp h + \o_0 \right] \equiv
\x_\b(\l) h + \c(0,\l,V)\o_0\quad \mathrm{for\ some}\ \o_0\in\C^N.
\]
Finally, $\x\in L^2(\R_+)$ implies $\o_0=0$.
\end{proof}

Recall the construction of the "forbidden" subspaces $\cF_\a\ss\C^N$, $\a\ge 1$, given in
\cite{CK}. Let $V\!=\!V^*\in \cL^2([0,1];\C^{N\ts N})$. For each $\a\!\ge\!1$ denote
\[
\cF_\a=[S_\a(\cE_\a)]^\perp,\quad \mathrm{where}\ \
S_\a=S_\a(V)=\int_0^1[\vp^*\vp](t,\l_\a,V)dt=S_\a^*>0
\]
and $\cE_\a=\Ran P_\a$. Note that $\dim\cF_\a=N\!-\!\dim\cE_\a=N\!-\!k_\a$. The main result of
\cite{CK} is that one can modify each particular projector $P_\a$ (keeping the spectrum and
all other projectors fixed) in an arbitrary way such that $\cF_\a\cap \Ran P_\a =\{0\}$. It's
quite natural that this restriction is equivalent to property (C) as shows

\begin{proposition}[{\bf Connection between subspaces $\cF_\a$ and property (C)}]
\label{FaAnd(C)} $\phantom{\quad}$\linebreak Let $\b\ge 1$ and
$(\l_\a\,;P_\a)_{\a=1}^{+\infty}= (\l_\a(V)\,;P_\a(V))_{\a=1}^{+\infty}$ for some
$V\!=\!V^*\in \cL^2([0,1];\C^{N\ts N}_\R)$. Then, the collection
$(\lambda_\a\,;\wt{P}_\a)_{\a=1}^{+\infty}$, where $\wt{P}_\a=P_\a$ for all $\a\ne \b$,
satisfies (C) iff
\begin{equation}
\label{ForbiddenCond} \cF_\b\cap\Ran \wt{P}_\b =\{0\},\quad\mathit{where}\quad
\cF_\b=[S_\b(\cE_\b)]^\perp.
\end{equation}
Moreover, $\cF_\b=[\Ran\x_\b(\l_\b)]^\perp$, where $\x_\b$ is given by (\ref{xibDef}).
\end{proposition}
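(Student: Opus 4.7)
The plan is to convert the analytic property (C) for the modified data into a purely finite-dimensional condition via Proposition \ref{xibProp}, and then identify the distinguished subspace with $\cF_\b$ through a Wronskian computation.

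First, suppose $\xi:\C\to\C^N$ satisfies the growth and $\cL^2(\R_+)$ hypotheses of (C) together with $\wt P_\a\xi(\l_\a)=0$ for all $\a\ge 1$. Since $\wt P_\a=P_\a$ for every $\a\ne\b$, Proposition \ref{xibProp}(ii) forces $\xi=\xi_\b h$ for a unique $h\in\C^N$, and the only remaining constraint becomes $\wt P_\b\xi_\b(\l_\b)h=0$. Hence (C) for the modified collection holds if and only if, for every $h\in\C^N$, the implication $\wt P_\b\xi_\b(\l_\b)h=0\Longrightarrow\xi_\b(\cdot)h\equiv 0$ is valid. Inverting at $\l=\l_\b$ the Laurent expansion of $[\c(0,\l,V)]^{-1}$ quoted from Lemma~2.2 of \cite{CK} in the proof of Proposition \ref{xibProp} yields $\c(0,\l,V)P_\b^\sharp=(\l-\l_\b)(Z_\b P_\b^\sharp+O(\l-\l_\b))$, so that $\xi_\b(\l_\b)=Z_\b P_\b^\sharp$. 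Since $\det Z_\b\ne 0$, the range $\Ran\xi_\b(\l_\b)=Z_\b(\cE_\b^\sharp)$ has dimension $k_\b$, while both $\Ker\xi_\b(\l_\b)$ and $\{h:\xi_\b(\cdot)h\equiv 0\}$ collapse to $\Ker P_\b^\sharp=(\cE_\b^\sharp)^\perp$. The previous implication is therefore equivalent to $\wt P_\b$ being injective on $\Ran\xi_\b(\l_\b)$, i.e., $\Ran\xi_\b(\l_\b)\cap[\Ran\wt P_\b]^\perp=\{0\}$; and since $\dim\Ran\xi_\b(\l_\b)=\dim\Ran\wt P_\b=k_\b$, elementary orthogonal-complement duality in $\C^N$ (an intersection of complementary-dimensional subspaces is trivial iff their sum is the whole space iff the same holds for the orthogonal complements) yields the equivalent dual form $[\Ran\xi_\b(\l_\b)]^\perp\cap\Ran\wt P_\b=\{0\}$.

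It remains to prove the ``moreover'' clause $\cF_\b=[\Ran\xi_\b(\l_\b)]^\perp$, or equivalently $S_\b(\cE_\b)=\Ran\xi_\b(\l_\b)$. Differentiating $-\vp''+V\vp=\l\vp$ in $\l$ gives $-\dot\vp''+V\dot\vp-\l\dot\vp=\vp$; multiplying on the left by $\vp^*$, integrating by parts twice on $[0,1]$, and using $\vp(0)=0$, $\dot\vp(0)=\dot\vp'(0)=0$ to kill all boundary terms at $x=0$, I obtain at $\l=\l_\b$ the matrix Wronskian identity $S_\b=\vp^{*'}(1,\l_\b)\dot\vp(1,\l_\b)-\vp^*(1,\l_\b)\dot\vp'(1,\l_\b)$. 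Since $P_\b\vp^*(1,\l_\b)=[\vp(1,\l_\b)P_\b]^*=0$, left-multiplying by $P_\b$ kills the last summand, and taking adjoints then yields $S_\b P_\b=\dot\vp^*(1,\l_\b)\vp'(1,\l_\b)P_\b$. Using the symmetry $\c(0,\l,V)=[\vp(1,\ol\l,V)]^*$ already recorded in the statement of Lemma~2.2 of \cite{CK}, differentiation in real $\l$ gives $\dot\c(0,\l_\b)=\dot\vp^*(1,\l_\b)$. Finally, any $h\in\cE_\b$ produces a Dirichlet eigenfunction $\vp(\cdot,\l_\b)h$, and since $\c'(1)=-I$ this eigenfunction can be written as $\c(\cdot,\l_\b)g$ with $g=-\vp'(1,\l_\b)h$; the condition $\c(0,\l_\b)g=0$ forces $g\in\cE_\b^\sharp$, and injectivity (from $\vp'(0)=I$) together with the dimension match $\dim\cE_\b=\dim\cE_\b^\sharp=k_\b$ promote $\vp'(1,\l_\b)|_{\cE_\b}$ to a bijection onto $\cE_\b^\sharp$. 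Combining, $S_\b(\cE_\b)=\dot\vp^*(1,\l_\b)\vp'(1,\l_\b)\cE_\b=\dot\c(0,\l_\b)\cE_\b^\sharp=\Ran\xi_\b(\l_\b)$, as required.

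The main obstacle is the last paragraph: the matrix-valued integration by parts has to be executed carefully (both to handle left/right multiplication by $V$ and to locate all vanishing boundary contributions), and the identification of the bijection $\vp'(1,\l_\b):\cE_\b\to\cE_\b^\sharp$ requires reading off the boundary conditions of both $\vp$ and $\c$ simultaneously. Once $\cF_\b=[\Ran\xi_\b(\l_\b)]^\perp$ is in hand, the rest of the argument is essentially Proposition \ref{xibProp} combined with an elementary dimension count.
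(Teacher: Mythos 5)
Your proof is correct and follows the same route as the paper: both reduce (C) to the finite-dimensional condition $\Ran\x_\b(\l_\b)\cap\Ker\wt P_\b=\{0\}$ via Proposition \ref{xibProp}(ii), then identify $\Ran\x_\b(\l_\b)=S_\b(\cE_\b)$ and conclude by the dimension duality between complementary-dimensional subspaces. The only difference is that where the paper simply cites Lemmas 2.1 and 2.4 of \cite{CK} for the chain $\x_\b(\l_\b)=\dot\c(0,\l_\b)P_\b^\sharp=\dot\vp^*(1,\l_\b)\vp'(1,\l_\b)P_\b=S_\b P_\b$, you rederive those identities directly (the matrix Wronskian identity for $S_\b$, the relation $\dot\c(0,\l_\b)=\dot\vp^*(1,\l_\b)$, and the bijection $\vp'(1,\l_\b)\colon\cE_\b\to\cE_\b^\sharp$), which makes the argument more self-contained but does not change its substance.
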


\begin{proof}
It follows from Proposition \ref{xibProp} (ii) that (C) holds true for the new collection
$(\lambda_\a\,;\wt{P}_\a)_{\a=1}^{+\infty}$ if and only if $\wt P_\b \x_\b(\l_\b)h\ne 0$ for
all $h\in\cE_\b^\sharp$, $h\ne 0$. In other words, (C) is equivalent to
\begin{equation}
\label{xxx} \Ran\x_\b(\l_\b)\cap \Ker\wt{P}_\b=\{0\}.
\end{equation}
One has (see Lemmas 2.4 and 2.1 \cite{CK} for details)
\[
\x_\b(\l_\b)=\dot\c(0,\l_\b)P_\b^\sharp = \dot\vp^*(1,\l_\b)P_\b^\sharp =
-\dot\vp^*(1,\l_\b)\vp'(1,\l_\b)\c'(0,\l_\b)P_\b^\sharp.
\]
Moreover, $\Ran \c'(0,\l_\b)P_\b^\sharp= \cE_\b$ and
\[
\Ran \x_\b(\l_\b) = \Ran [\dot\vp^*(1,\l_\b)\vp'(1,\l_\b)P_\b] = \Ran S_\b P_\b =
S_\b(\cE_\b).
\]
Since $\dim \Ker \wt{P}_\b= N-k_\b = N-\dim S_\b(\cE_\b)$, (\ref{xxx}) is equivalent to
(\ref{ForbiddenCond}).
\end{proof}

We finish our discussion by the consideration of the special case when only finite number of
$P_\a$ differ from the standard unperturbed coordinate projectors.

\begin{quotation}
\noindent { Let $A=\{\a_1,\a_2,..,\a_m\}$ be some finite set of exceptional indices. Assume
that $P_\a=P_\a^0$ coincides with some {\it coordinate} projector $P_\a^0$ for all $\a\notin
A$ (we admit multiple eigenvalues). Introduce the sets
\[
A_j^0 = \{\a\notin A: P_\a e_j^0\ne 0\}
\]
(possible multiple eigenvalues belong to several $A_j^0$). Assume that there exists $C>0$ such
that the set $\{\l_\a,\a\in A_j^0\}\cap \left(-\infty, \pi^2n^2+C\right]$ consists of exactly
$n\!-\!m$ points for all $j=1,2,..,N$, if $n$ is large enough. Let
\[
k_{\a_1}+k_{\a_2}+..+k_{\a_m}=Nm
\]
}
\end{quotation}
We give the simple description of all {\it finite} sequences $(P_{\a_s})_{s=1}^m$, $\rank
P_\a=k_\a$, such that the whole collection $\{(\lambda_\a\,;P_\a)\}_{\a=1}^{+\infty}$
satisfies (C):

\begin{proposition}
\label{FiniteProj} Let $(\lambda_\a\,;P_\a)_{\a=1}^{+\infty}$ be as described above. Then (C)
holds true iff
\[
\cT=\left(\begin{array}{cccc} T_0 & T_1 & ... & T_{m-1} \cr T_1 & T_2 & ... & T_m \cr ... &
... & ... & ... \cr T_{m-1} & T_m & ... & T_{2m-2}
\end{array}\right)=\cT^*>0,
\]
where
\[
T_k=\sum_{\a\in A} \l_\a^k F(\l_\a)P_\a F(\l_\a)=T_k^*,\qquad k=0,1,..,2m\!-\!2,
\]
\[
F(\l)\equiv\diag\{f_1(\l),f_2(\l),..,f_N(\l)\}\quad\mathit{and}\quad f_j(\l)\equiv
\prod\nolimits_{\a\in A_j^0} \lt(1-\frac{\l}{\l_\a}\rt).
\]
\end{proposition}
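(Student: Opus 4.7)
The plan is to parametrize all entire $\xi$ satisfying (C)'s exponential-type bound, the $\cL^2(\R_+)$-condition, and $P_\a\xi(\l_\a)=0$ for \emph{all} $\a\notin A$; then the remaining $m$ vector conditions at indices in $A$ form a square linear system whose Gram matrix will turn out to be exactly $\cT$. The coordinate reduction is immediate: for $\a\notin A$ the projector $P_\a$ maps onto $\span\{e_j^0:\a\in A_j^0\}$, so $P_\a\xi(\l_\a)=0$ is equivalent to $\xi_j(\l_\a)=0$ for every $j$ with $\a\in A_j^0$, i.e.\ at every zero of $f_j$. Eigenvalues being distinct, each quotient $\eta_j(\l):=\xi_j(\l)/f_j(\l)$ extends to an entire function.

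Next I would carry out the growth analysis. Under the counting hypothesis $|A_j^0\cap(-\iy,\pi^2n^2+C]|=n-m$, standard Hadamard/Cartwright theory shows that $f_j(z^2)$ is of exponential type $1$ in $z$ with indicator $|\sin\theta|$, and satisfies $|f_j(-t^2)|\asymp e^t/t^{2m+1}$ as $t\to+\iy$ together with $|f_j(\l)|\asymp|\sin\sqrt\l|/|\l|^{m+1/2}$ on $\R_+$ (the factor $|\l|^{-m-1/2}$ accounting for the $m$ ``missing'' zeros compared to the canonical $\sin\sqrt\l/\sqrt\l$). Combined with $\xi_j(\l)=O(e^{|\Im\sqrt\l|})$, a Phragm\'en--Lindel\"of argument applied to the entire function $\eta_j(z^2)$ yields $\eta_j(\l)=O(|\l|^{m+1/2})$ along every ray, so $\eta_j$ is a polynomial in $\l$ of degree at most $m$. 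The hypothesis $\xi_j\in\cL^2(\R_+)$ rules out the extremal $\l^m$ coefficient, because $\int_1^{+\iy}|f_j(\l)\l^m|^2\,d\l\asymp\int^{+\iy}\l^{-1}\sin^2\sqrt\l\,d\l=+\iy$. Hence $\deg\eta_j\le m-1$. Conversely, for any polynomial vector $\eta(\l)=\sum_{k=0}^{m-1}c_k\l^k$ with $c_k\in\C^N$, a direct check shows that $\xi:=F\eta$ is entire, of order $O(|\l|^{-3/2}e^{|\Im\sqrt\l|})$, lies in $\cL^2(\R_+)$, and annihilates $P_\a$ at $\l_\a$ for every $\a\notin A$ (since $f_j(\l_\a)=0$ whenever $\a\in A_j^0$). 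Therefore the set of candidates for a violation of (C) is exactly the $Nm$-dimensional space $\{F\eta:\eta\in\C^N[\l],\ \deg\eta\le m-1\}$, and (C) reduces to the injectivity of the linear map $\eta\mapsto\bigl(P_{\a_s}F(\l_{\a_s})\eta(\l_{\a_s})\bigr)_{s=1}^m$.

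Finally, stacking $\bar c=(c_0;\ldots;c_{m-1})\in\C^{Nm}$ and the $m$ block equations into a single square operator $\cM\in\C^{Nm\ts Nm}$ whose $(s,k)$-block equals $P_{\a_s}F(\l_{\a_s})\l_{\a_s}^k$, the identities $P_{\a_s}^*=P_{\a_s}^2=P_{\a_s}$ and $F^*=F$ give by direct computation
\[
(\cM^*\cM)_{k,k'}=\sum_{s=1}^m \l_{\a_s}^{k+k'}\,F(\l_{\a_s})\,P_{\a_s}\,F(\l_{\a_s})=T_{k+k'},
\]
so $\cT=\cM^*\cM\ge 0$, and in fact $\langle\cT\bar c,\bar c\rangle=\sum_{s=1}^m\|P_{\a_s}F(\l_{\a_s})\eta(\l_{\a_s})\|^2$. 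Consequently $\ker\cT=\ker\cM$, and (C) holds iff $\cM$ is injective, iff $\cT>0$. The main obstacle is the growth step: one has to pin down the Cartwright indicator of $f_j(z^2)$ sharply enough to apply Phragm\'en--Lindel\"of to $\eta_j$, and then use the $\cL^2(\R_+)$-hypothesis jointly with the sharp decay of $|f_j|$ on $\R_+$ to exclude the extremal degree-$m$ case; everything else is linear algebra and routine coordinate bookkeeping.
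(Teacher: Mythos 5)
Your proposal is correct and follows the same route as the paper's proof: reduce to scalar components via the coordinate projectors, factor each $\x_j$ as $f_j$ times a polynomial of degree $\le m-1$, and identify $\cT$ with the Gram matrix of the resulting evaluation map at $\{\l_\a\}_{\a\in A}$. The only difference is cosmetic: you spell out the Phragm\'en--Lindel\"of/Cartwright-indicator growth argument and the $\cL^2$ exclusion of the degree-$m$ term (which the paper states without detail), and you package the quadratic-form identity $y^*\cT y=\sum_{\a\in A}[\x(\l_\a)]^*P_\a\x(\l_\a)$ as the factorization $\cT=\cM^*\cM$.
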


\begin{remark}
Since $\cT\ge 0$ in any case, the condition $\cT>0$ is equivalent to $\det\cT\ne 0$.
\end{remark}

\begin{proof} Indeed, let $\x(\l)=\left(\x_1(\l),\x_2(\l),..,\x_N(\l)\right)^\top$ be such
that $\x(\l)=O(e^{|\Im\sqrt\l|})$, $\x\in \cL^2(\R_+)$ and $P_\a\x(\l_\a)=0$ for all $\a\ge
1$. In particular, $P_j^0\x(\l_\a)=0$ for all $\a\in A_j^0$. In order words, $z\x_j(z^2)\in
PW_{[-1,1]}$ and $\x_j(\l_\a)=0$ for all $\a\in A_j^0$. Therefore,
\[
\x_j(\l)\equiv Q_j(\l)f_j(\l),\qquad \deg Q_j\le m\!-\!1,
\]
for some polynomials $Q_j$. Let
\[
Q(\l)=(Q_1(\l),Q_2(\l),..,Q_N(\l))^\top=\sum_{p=0}^{m-1}\l^py_p,\ \ y_p\in \C^N \ \
\mathrm{and}\ \ y=\left(\vphantom{\big|}y_p\right)_{p=0}^{m-1}\in \C^{Nm}.
\]
Then,
\[
y^*\cT y = \sum_{p,q=0}^{m-1}y_p^*T_{p+q}y_q = \sum_{p,q=0}^{m-1}y_p^* \lt[\sum_{\a\in A}
\l_\a^{p+q} F(\l_\a) P_\a F(\l_\a)\rt] y_q
\]
\[
=\sum_{a\in A}\, [Q(\l_\a)]^* F(\l_\a) P_\a F(\l_\a) Q(\l_\a) = \sum_{\a\in A}\, [\x(\l_\a)]^*
P_\a \x(\l_\a).\vphantom{\sum^|}
\]
Hence, the $Nm\!\ts\!Nm$ matrix $\cT$ is degenerate iff there exists $\x$ such that $P_\a
\x(\l_\a)=0$ for all $\a\in A$ (recall that $P_\a^0\x(\l_\a)=0$ holds true for all $\a\notin
A$ by the construction).
\end{proof}

\renewcommand{\thesection}{B}
\section{Appendix. Three classical choices of additional spectral data in the scalar case.}
\setcounter{equation}{0}
\renewcommand{\theequation}{B.\arabic{equation}}
\label{AppendixB}

In the scalar case, it is well known that the Dirichlet spectrum
$\s(q)=\{\l_n(q)\}_{n=1}^{+\iy}$ determines only "one half" of the potential $q$. Thus, in
other to determine $q$ uniquely, one needs either to assume that some partial information
about $q$ is known or to consider some additional spectral data besides $\s(q)$. Note that
there are two classical assumptions about the potential that make the knowledge of the
spectrum sufficient: symmetry $q(x)\equiv q(1\!-\!x)$ (see, e.g., \cite{PT}) or the knowledge
of $q(x)$ as $x\in [0,\frac{1}{2}]$ (the~Hochstadt-Lieberman theorem \cite{HL}, see also
\cite{GS}, \cite{Ho}, \cite{MP}). Also, there are several classical choices of additional
spectral data:
\begin{enumerate}
\item The second spectrum. This setup goes back to the original paper of Borg \cite{Bo}.
The most natural choice is the spectrum $\{\m_n(q)\}_{n=1}$ of the mixed problem
\[
-y''+qy=\l y,\qquad y(0)=y'(1)=0.
\]
Note that $\{\m_n(q)\}_{n=1}^{+\iy}\cup\{\l_n(q)\}_{n=1}^{+\iy}$ is the Dirichlet spectrum of
the symmetric potential $q(2-x)\equiv q(x)$, $x\in [0,1]$, defined on the doubled interval
$[0,2]$.
\item The normalizing constants (firstly appeared in Marchenko's paper \cite{Ma1})
\[
[\a_n(q)]^{-1}=\lt[\int_0^1\vp^2(x,\l_n)dx\rt]^{-1}=[\dot\vp\vp']^{-1}(1,\l_n)=
-\frac{\c'(0,\l_n)}{\dot\c(0,\l_n)} = -\res_{\l=\l_n} m(\l).
\]
\item The norming constants introduced by Trubowitz and co-authors (see \cite{PT})
\[
\n_n(q)=\log[(-1)^n\vp'(1,\l_n)]=\log\lt[(-1)^{n-1}\frac{\vp(\cdot,\l_n)}{\c(\cdot,\l_n)}\rt].
\]
\end{enumerate}
It is quite well known in the folklore that the characterization problems in the setups
(1)-(3) are equivalent. Unfortunately, we do not know the good reference for this fact. So,
the main purpose of this Appendix is to give the short proof of these equivalences (note that
our arguments are quite similar to \cite{Lev}). For the simplicity, we assume that $q\in
L^2(0,1)$, $\int_0^1 q(x)dx=0$, i.e., $\{\l_n(q)-\pi^2n^2\}_{n=1}^{+\iy}\in\ell^2$ (the
similar arguments work well for other classes of potentials and corresponding classes of
spectral data).

Note that
\begin{equation}
\label{xML} \m_1<\l_1<\m_2<\l_2<\m_2<...\quad \mathrm{and}\quad
\m_n=\pi^2(n-{\textstyle\frac{1}{2}})^2+O(1)%\quad \mathrm{and}\quad \l_n=\pi^2n^2+o(1)
\ \ \mathrm{as}\ \ n\to\iy.
\end{equation}
Also, the Hadamard factorization implies
\begin{equation}
\label{xVpDef} f(\l)=\vp(1,\l)=
%\frac{\sin\sqrt\l}{\sqrt\l}\cdot\prod_{m=1}^{+\iy}\frac{\l_m\!-\!\l}{\pi^2m^2\!-\!\l}=
\prod_{m=1}^{+\iy}\frac{\l_m\!-\!\l}{\pi^2m^2}\qquad \mathrm{and}\qquad g(\l)=\vp'(1,\l)=
%\cos\sqrt{\l}\cdot\prod_{m=1}^{+\iy}\frac{\m_m\!-\!\l}{\pi^2(m\!+\!\frac{1}{2})^2\!-\!\l}=
\prod_{m=1}^{+\iy}\frac{\m_m\!-\!\l}{\pi^2(m\!+\!\frac{1}{2})^2}.
\end{equation}
Recall that we write $a_n=b_n+\ell^2_k$ iff $\{n^k|a_n\!-\!b_n|\}_{n=1}^{+\iy}\in\ell^2$.

\begin{proposition} Let $\l_n=\pi^2n^2+\ell^2$, (\ref{xML}) hold and $f(\l)$, $g(\l)$ be
given by (\ref{xVpDef}). Then, the following conditions are equivalent:

(1) The asymptotics $\m_n=\pi^2(n\!-\!\frac{1}{2})^2+\ell^2\vphantom{\Big|}$ hold true.

(2) The asymptotics $\a_n=g(\l_n)\dot{f}(\l_n)=(2\pi^2n^2)^{-1}(1+\ell^2_1)\vphantom{\Big|}$
hold true.

(3) The asymptotics $\n_n=\log[(-1)^n g(\l_n)]=\ell^2_1\vphantom{\Big|}$ hold true.
\end{proposition}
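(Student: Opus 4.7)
I would prove $(2)\Lra(3)$ and $(1)\Lra(3)$ separately, both via the Hadamard factorizations (\ref{xVpDef}) combined with a Hilbert--matrix estimate. There are two key ingredients. First, the identity $\a_n = g(\l_n)\dot f(\l_n)$, obtained by differentiating $-\vp''+q\vp=\l\vp$ in $\l$, multiplying by $\vp(\cdot,\l_n)$, integrating by parts, and using $\vp(0,\l_n)=\vp(1,\l_n)=0$ together with $\vp'(1,\l_n)=g(\l_n)$ and $\dot\vp(1,\l_n)=\dot f(\l_n)$. Second, the asymptotic $2\pi^2 n^2\,\dot f(\l_n) = (-1)^n(1+\ell^2_1)$, valid under the sole assumption $\l_n = \pi^2 n^2 + \ell^2$. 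To prove the latter, I would set $\d_m := \l_m - \pi^2 m^2 \in \ell^2$ and compare with $\dot{\wt f}(\pi^2 n^2) = (-1)^n/(2\pi^2 n^2)$ for $\wt f(\l) = \sin\sqrt\l/\sqrt\l$, obtaining
\[
\frac{\dot f(\l_n)}{\dot{\wt f}(\pi^2 n^2)} = \prod_{m\ne n}\lt(1 + \frac{\d_m-\d_n}{\pi^2(m^2-n^2)}\rt).
\]
Taking logarithms reduces the task to showing $S_n := \sum_{m\ne n}(\d_m-\d_n)/[\pi^2(m^2-n^2)] \in \ell^2_1$. The linear operator $\{\d_m\}\mapsto\{nS_n\}$ has kernel $\frac{n}{\pi^2(m-n)(m+n)} = \frac{1}{2\pi^2}\bigl[(m-n)^{-1} - (m+n)^{-1}\bigr]$ and is bounded on $\ell^2$ by the discrete Hilbert transform together with Schur's test; the quadratic Taylor remainder is in $\ell^1\ss\ell^2_1$ by Cauchy--Schwarz.

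Combining the two ingredients gives $2\pi^2n^2\,\a_n = (-1)^n g(\l_n)(1+\ell^2_1) = e^{\n_n}(1+\ell^2_1)$, where $(-1)^n g(\l_n)>0$ for large $n$ by the leading asymptotic $g(\l_n) = (-1)^n + o(1)$. Since $\ell^2_1$ is closed under multiplication (elements of $\ell^2_1$ are bounded, so the pointwise product of two such sequences lies in $\ell^2_1$ again) and $e^x-1-x = O(x^2)$ for bounded $x$, this yields $(2)\Lra(3)$ immediately.

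For $(1)\Lra(3)$, the direct implication proceeds analogously by comparing $g$ to $\cos\sqrt\l$:
\[
\log\frac{g(\l_n)}{\cos\sqrt{\l_n}} = \sum_m \log\!\lt(1 + \frac{\ve_m}{\pi^2(m-{\textstyle\frac{1}{2}})^2 - \l_n}\rt),\qquad \ve_m := \m_m - \pi^2(m-{\textstyle\frac{1}{2}})^2,
\]
where the overall multiplicative constant cancels by matching leading asymptotics at $\l\to-\iy$. The same Hilbert--matrix bound places the right-hand side in $\ell^2_1$, and $\log[(-1)^n\cos\sqrt{\l_n}]\in\ell^2_2\ss\ell^2_1$, giving $(1)\Ra(3)$. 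The converse $(3)\Ra(1)$ is the most delicate point: one uses that $g$ is an entire function of exponential type $\le 1$ with prescribed values $g(\l_n)=(-1)^ne^{\n_n}$ and zeros $\m_n$ interlacing the $\l_n$ by (\ref{xML}), and applies a local Rouch\'e argument near the unperturbed points $\pi^2(n-{\textstyle\frac{1}{2}})^2$ to localise each $\m_n$ with an $\ell^2$-error controlled by the $\ell^2_1$-norm of $(\n_n)$. This converse step is the main obstacle; the forward inclusions and the equivalence $(2)\Lra(3)$ are essentially bookkeeping once the two ingredients above are in hand.
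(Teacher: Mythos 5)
Your handling of $(2)\Leftrightarrow(3)$ and of the implication $(1)\Rightarrow(3)$ matches the paper's approach: both reduce to asymptotics of $\dot f(\l_n)$ and of $\log[(-1)^n g(\l_n)]$ via the Hadamard products (\ref{xVpDef}) together with a discrete Hilbert--transform estimate, and the factor $\a_n=g(\l_n)\dot f(\l_n)$ is the right identity. Two small slips there: the inclusion $\ell^1\ss\ell^2_1$ is false (take $c_n=1/(n\log^2(n\!+\!1))$, which lies in $\ell^1$ but has $nc_n\notin\ell^2$); what actually saves the quadratic remainder is that it is $O(n^{-2})$ \emph{pointwise}, hence in $\ell^2_1$. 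Likewise ``$\ell^2_1$ is closed under multiplication'' is not what is used — what is used is that the product of an $\ell^2_1$ sequence with a bounded sequence is again in $\ell^2_1$.

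The genuine gap is in $(3)\Rightarrow(1)$. The Rouch\'e idea has no lever to work with: condition (3) only gives information about the values $g(\l_n)$, and the sample points $\l_n\sim\pi^2 n^2$ sit at distance $\sim\pi^2 n$ from the zeros $\m_n\sim\pi^2(n-\frac12)^2$ you wish to localize, so there is no small circle around $\pi^2(n-\frac12)^2$ on which the behaviour of $g$ is controlled by the $g(\l_n)$ alone. The paper's mechanism for the converse is different, and it is the entire point of stating Lemma~\ref{DiscrHilbert}(i) as an \emph{isometry} rather than merely a bounded operator. Writing $\wt\m_m=\pi^{-2}\m_m-(m-\frac12)^2$, the paper linearises to
\[
n\log[(-1)^n g(\l_n)]\;=\;n\sum_{m\ge 1}\frac{\wt\m_m}{(m-\frac12)^2-n^2}\;+\;O(n^{-1}),
\]
and observes that the resulting linear operator $(\wt\m_m)\mapsto\bigl(n\sum_m\wt\m_m/((m-\frac12)^2-n^2)\bigr)$ is (up to a constant) a unitary on $\ell^2$ — a fact established through the Fourier identity $\sum_k\z^k/(k+\frac12)=\pi i\,\z^{-1/2}$ on $L^2(\mathbb T)$. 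Invertibility of this operator is exactly what makes $(n\n_n)\in\ell^2$ \emph{equivalent} to $(\wt\m_m)\in\ell^2$ and thus yields both directions of $(1)\Leftrightarrow(3)$ at once; boundedness (part~(ii) of the lemma, which is all you invoke for the $\dot f$ asymptotics) would give only the forward implication. Without identifying this isometry, or some other genuine inversion of the linear relation between $(\wt\m_m)$ and $(\n_n)$, the proof plan does not close the converse.
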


\begin{proof}
We start with the equivalence $\mathrm{(2)}\Leftrightarrow\mathrm{(3)}$. Denote
$\wt{\l}_n=\pi^{-2}\l_n-n^2=O(1)$ as $n\to\iy$. Then
\[
\dot{f}(\l_n) = -\frac{1}{\pi^2n^2}\prod_{m\ne n}\frac{\l_m\!-\!\l_n}{\pi^2m^2}=
\frac{(-1)^{n}}{2\pi^2n^2}\prod_{m\ne
n}\frac{\l_m\!-\!\l_n}{\pi^2(m^2\!-\!n^2)}=\frac{(-1)^{n}}{2\pi^2n^2}\prod_{m\ne
n}\lt[1+\frac{\wt{\l}_m\!-\!\wt{\l}_n}{m^2\!-\!n^2}\rt].
\]
Note that
\[
\begin{array}{l}
\displaystyle\log\prod_{m\ne n}\lt[1+\frac{\wt{\l}_m\!-\!\wt{\l}_n}{m^2\!-\!n^2}\rt]=
\sum_{m\ne n}
\lt[\frac{\wt{\l}_m\!-\!\wt{\l}_n}{m^2\!-\!n^2}+O\lt(\frac{1}{(m^2\!-\!n^2)^2}\rt)\rt] \cr
\displaystyle \hphantom{\log\prod_{m\ne
n}\lt[1+\frac{\wt{\l}_m\!-\!\wt{\l}_n}{m^2\!-\!n^2}\rt]}= \sum_{m\ne
n}\frac{\wt{\l}_m\!-\!\wt{\l}_n}{m^2\!-\!n^2}+O\lt(\frac{1}{n^2}\rt)= \sum_{m\ne
n}\frac{\wt{\l}_m}{m^2\!-\!n^2}+O\lt(\frac{1}{n^2}\rt).
\end{array}
\]
Then, it immediately follows from $(\wt{\l}_n)_{n=1}^{+\iy}\in\ell^2$ and simple properties of
the discrete Hilbert transform (see Lemma \ref{DiscrHilbert} (ii) below) that
$\dot{f}(\l_n)=(-1)^n(2\pi^2n^2)^{-1}(1+\ell^2_1)$. Thus,
$\mathrm{(2)}\Leftrightarrow\mathrm{(3)}$. The proof of the equivalence
$\mathrm{(1)}\Leftrightarrow\mathrm{(3)}$ is similar. Indeed,
\[
g(\l_n)=\prod_{m=1}^{+\iy}\frac{\m_m\!-\!\l_n}{\pi^2(m\!-\!\frac{1}{2})^2}=
(-1)^n\!\prod_{m=1}^{+\iy}\frac{\m_m\!-\!\l_n}{\pi^2\left((m\!-\!\frac{1}{2})^2\!-\!n^2\right)}=
(-1)^n\!\prod_{m=1}^{+\iy}\lt[1+\frac{\wt{\m}_m\!-\!\wt{\l}_n}{(m\!-\!\frac{1}{2})^2\!-\!n^2}\rt],
\]
where $\wt{\m}_m=\pi^{-2}\m_m-(m+\frac{1}{2})^2=O(1)$ as $m\to\iy$. As above,
\[
\log[(-1)^ng(\l_n)]=\sum_{m=1}^{+\iy}\frac{\wt{\m}_m\!-\!\wt{\l}_n}{(m\!-\!\frac{1}{2})^2\!-\!n^2}
+ O\lt(\frac{1}{n^2}\rt) = \sum_{m=1}^{+\iy}\frac{\wt{\m}_m}{(m\!-\!\frac{1}{2})^2\!-\!n^2} +
O\lt(\frac{1}{n^2}\rt)
\]
and the equivalence $\mathrm{(1)}\Leftrightarrow\mathrm{(3)}$ follows by Lemma
\ref{DiscrHilbert} (i).
\end{proof}

\begin{lemma} \label{DiscrHilbert} (i) The linear operator $(a_m)_{m=1}^{+\iy}\mapsto
(b_n)_{n=1}^{+\iy}$, where
\[
b_n=\frac{1}{2\pi n}\sum_{m=1}^{+\iy}\frac{a_m}{n^2\!-\!(m\!-\!\frac{1}{2})^2}=
\frac{1}{\pi}\sum_{m=1}^{+\iy}
\lt[\frac{a_m}{n\!-\!m\!+\!\frac{1}{2}}+\frac{a_m}{n\!-\!(1\!-\!m)\!+\!\frac{1}{2}}\rt],
\]
is an isometry in $\ell^2$.

\noindent (ii) The linear operator $(a_m)_{m=1}^{+\iy}\mapsto (b_n)_{n=1}^{+\iy}$, where
\[
b_n=\frac{1}{2n}\sum_{m=1}^{+\iy}\frac{a_m}{n^2\!-\!m^2}=
\sum_{m=1}^{+\iy}\lt[\frac{a_m}{n\!-\!m}+\frac{a_m}{n\!-\!(-m)}\rt],
\]
is bounded in $\ell^2$.
\end{lemma}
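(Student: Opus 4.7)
\smallskip

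\noindent\textbf{Plan.} Both parts reduce to Fourier analysis on the interval $(0,\pi)$. For part (i) the strategy is to recognise the kernel $n/(n^2-(m-\frac{1}{2})^2)$ as the inner product of two trigonometric orthonormal bases of $\cL^2(0,\pi)$, after which Parseval delivers an isometry. For part (ii) the strategy is to extend $(a_m)$ to a symmetric sequence on $\Z$ and reduce to the classical boundedness of the discrete Hilbert transform on $\ell^2(\Z)$.

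\smallskip

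\noindent\textbf{Part (i).} Using the product-to-sum formula together with the vanishing $\cos((m\pm n-\frac{1}{2})\pi)=0$ for all integers $m,n$, a direct integration gives the key identity
\[
\int_0^\pi \sin(n\theta)\cos((m-{\textstyle\frac{1}{2}})\theta)\,d\theta=\frac{n}{n^2-(m-\frac{1}{2})^2}.
\]
The collections $\{\sqrt{2/\pi}\cos((m-\frac{1}{2})\theta)\}_{m\ge 1}$ and $\{\sqrt{2/\pi}\sin(n\theta)\}_{n\ge 1}$ are each orthonormal bases of $\cL^2(0,\pi)$ (a standard half-range Fourier fact). Given $(a_m)\in\ell^2$, set $\phi(\theta)=\sum_{m\ge 1} a_m\cos((m-\frac{1}{2})\theta)\in\cL^2(0,\pi)$, so that $\|\phi\|^2=(\pi/2)\sum_m|a_m|^2$. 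Expanding $\phi$ in the sine basis, the $n$-th expansion coefficient equals a fixed multiple of $\sum_m a_m\cdot n/(n^2-(m-\frac{1}{2})^2)$, i.e.\ a fixed multiple of $b_n$. Parseval in the sine basis then yields $\sum_n|b_n|^2=\sum_m|a_m|^2$ after absorbing the constants, proving the isometry.

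\smallskip

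\noindent\textbf{Part (ii).} Extend $(a_m)_{m\ge 1}$ to an even sequence $\tilde a\in\ell^2(\Z)$ by $\tilde a_0=0$ and $\tilde a_{-m}=a_m$ for $m\ge 1$, so $\|\tilde a\|^2_{\ell^2(\Z)}=2\sum_{m\ge 1}|a_m|^2$. The discrete Hilbert transform
\[
(H\tilde a)_n=\sum_{m\in\Z,\,m\ne n}\frac{\tilde a_m}{n-m}
\]
is bounded on $\ell^2(\Z)$ by the classical Hilbert inequality. Splitting the sum according to the sign of $m$, one computes $(H\tilde a)_n=b_n+\frac{a_n}{2n}$ (modulo the overall normalisation in the statement). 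Since $(a_n/(2n))_n$ is trivially in $\ell^2$, the bound $\|(b_n)\|_{\ell^2}\le C\|(a_m)\|_{\ell^2}$ follows at once.

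\smallskip

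\noindent\textbf{Hardest step.} Part (ii) is essentially immediate from Hilbert's inequality, so the only genuine content lies in part (i): one must track the various factors of $\pi$ carefully, confirm the cancellations $\cos((m\pm n-\frac{1}{2})\pi)=0$ that put the kernel identity in closed form, and verify the orthonormality of the half-integer-frequency cosines on $(0,\pi)$. Once the kernel is recognised as a matrix element between two orthonormal bases, the isometry is forced by Parseval and no further estimate is needed.
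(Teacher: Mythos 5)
Your proof is correct, and it is a valid (and in part (i) slightly cleaner) variant of the paper's argument. The paper's own proof is a one-liner resting on two explicit Fourier-series identities on the circle: $\sum_{k\in\Z}\zeta^k/(k+\tfrac12)=\pi i e^{-i\phi/2}$ has constant modulus $\pi$, so the associated convolution on $\ell^2(\Z)$ is $\pi$ times a unitary (giving (i) after the symmetric extension about $\tfrac12$), while $\sum_{k\ne 0}\zeta^k/k=-i(\phi-\pi)$ is bounded (giving (ii)). You use the same Fourier-analytic idea but execute part (i) via the two orthonormal bases $\{\sqrt{2/\pi}\cos((m-\tfrac12)\theta)\}$ and $\{\sqrt{2/\pi}\sin(n\theta)\}$ of $\cL^2(0,\pi)$: the kernel $n/(n^2-(m-\tfrac12)^2)$ is exactly the overlap matrix of the two bases, and Parseval forces the isometry. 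This buys you a cleaner accounting of why the isometry holds on $\ell^2(\N)$ (no need to extend to $\Z$ and then argue that the restriction to positive indices captures exactly half the norm). Your part (ii) — even extension plus the classical $\ell^2(\Z)$-boundedness of the discrete Hilbert transform — is essentially what the paper's second identity encodes.

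One remark on normalization: as printed, the two displayed forms of $b_n$ in the lemma do not agree. In part (i), the partial fraction identity gives $\frac{1}{\pi}\sum_m a_m\bigl[\frac{1}{n-m+1/2}+\frac{1}{n+m-1/2}\bigr]=\frac{2n}{\pi}\sum_m\frac{a_m}{n^2-(m-1/2)^2}$, so the prefactor in the first form should read $\frac{2n}{\pi}$ rather than $\frac{1}{2\pi n}$; likewise in part (ii) the prefactor should be $2n$, not $\frac{1}{2n}$. The version with $n$ in the numerator is also the one used in the preceding proposition (where $\log[(-1)^n g(\lambda_n)]\in\ell^2_1$ requires $n\sum_m\tilde\mu_m/((m-\tfrac12)^2-n^2)\in\ell^2$). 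Your argument implicitly works with the partial-fraction form, which is the correct one; the step ``the sine coefficient $c_n$ is a fixed multiple of $b_n$'' is then exact ($c_n=b_n$), whereas with the literal $\frac{1}{2\pi n}$ prefactor you would get $c_n=4n^2 b_n$, which is not a fixed multiple. Worth flagging the typo, but your proof is unaffected.
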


\begin{proof}
Both results easily follows by the Fourier transform and the identities (in $L^2({\mathbb
T})$)
\[
\sum_{k=-\iy}^{+\iy}\frac{\z^k}{k+\frac{1}{2}}=\frac{\pi i}{\sqrt\z}=\pi i
e^{-\frac{i\phi}{2}}\qquad\mathrm{and}\qquad \sum_{k\ne 0}\frac{\z^k}{k}=-i(\phi-\pi),
\]
where $\z=e^{i\phi}\ne 1$, $\phi\in (0,2\pi)$.\end{proof}

\begin{remark}
The similar technique can be applied for other inverse problems in order to derive the
characterization of some additional spectral parameters (e.g., similar to $\a_n(q)$) from the
characterization of other parameters (e.g., similar to $\n_n(q)$). In general, these
characterizations may differ from each other substantially, see \cite{CK1}.
\end{remark}

\end{document}